\newtheorem{prepos}{Proposition}
\theoremstyle{plain}
\newtheorem{theorem}{Theorem}[section]
\newtheorem{lemma}[theorem]{Lemma}
\newtheorem{corol}[theorem]{Corollary}
\newtheorem{propos}[theorem]{Proposition}
\theoremstyle{definition}
\newtheorem{definition}[theorem]{Definition}
\newtheorem{note}[theorem]{Remark}
\newtheorem{example}[theorem]{Example}
\newtheorem{notation}[theorem]{Notation}
\renewcommand{\Im}{\operatorname{Im}}
\def\sgn{\mathop{\rm sign}\nolimits}
\begin{document}
\title{On the number of real critical points of logarithmic
derivatives and the Hawaii conjecture}

\author{Mikhail Tyaglov\thanks{The work was supported by the
Sofja Kovalevskaja Research Prize of Alexander von Humboldt
Foundation.
Email: {\tt tyaglov@math.tu-berlin.de}}  \\
\small Institut f\"ur Mathematik,  Technische Universit\"at Berlin
}

\maketitle

\vspace{4mm}

\begin{flushright}
\textit{Dedicated with gratitude to Thomas Craven,\\
George Csordas, and Wayne Smith.}
\end{flushright}

\vspace{2mm}

\begin{abstract}
For a given real entire function $\varphi$ with finitely many
nonreal zeroes in the class $U_{2n}^*$, $n\geqslant0$, we
establish a connection between the number of real zeroes of the
functions $Q[\varphi]=(\varphi'/\varphi)'$ and
$Q_1[\varphi]=(\varphi''/\varphi')'$. This connection leads to a
proof of the Hawaii conjecture~[T.\,Craven, G.\,Csordas, and
W.\,Smith\/, The zeroes of derivatives of entire functions and the
P\'olya-Wiman conjecture, Ann. of Math.~(2) 125 (1987), 405--431]
stating that the number of real zeroes of $Q[\varphi]$ does not
exceed the number of nonreal zeroes of $\varphi$ if $\varphi$ is a
real polynomial.
\end{abstract}

%

\section*{Introduction}

In this paper, we investigate the real critical points of
logarithmic derivatives of real entire function, $\varphi(z)$, in
the class $\mathcal{L-P^*}$ (the class of functions of the form
$f(z)= e^{az^2}g(z)$ where $a>0$ and $g$ is a real entire function
of genus at most $1$, see also
Definition~\ref{prop.diff.U_2n*.function}). For a given real
entire function~$\varphi$, the derivative of its logarithmic
derivative is as follows
\begin{equation*}
Q[\varphi](z)\stackrel{def}{=}\dfrac{d}{dz}\left(\dfrac{\varphi'(z)}{\varphi(z)}\right)=
\dfrac{\varphi(z)\varphi''(z)-\left(\varphi'(z)\right)^2}{\left(\varphi(z)\right)^2},\quad\text{\textrm{where}}\quad
\varphi'(z)=\dfrac{d\varphi(z)}{dz}~.
\end{equation*}
So we are interested in the distribution of real zeroes of this
function.

In~\cite{CravenCsordasSmith}, G.\,Csordas, T.\,Craven and
W.\,Smith, via J.\,v.\,Sz.\,Nagy~\cite{Nagy}, attributed to Gauss
the enquiry\footnote{For the history of Gauss' enquiry,
see~\cite{EdwardsHinkkanen}.} about finding a relationship between
the number of nonreal zeroes of $\varphi$ and the number of real
zeroes of the function $Q[\varphi]$ in the case when $\varphi$ is
a real polynomial. They proved the following result for entire
functions in the class
$\mathcal{L-P^*}$~\cite{CravenCsordasSmith}.

\vspace{0.2cm}

\noindent\textbf{Theorem A
(\cite[Theorem~1]{CravenCsordasSmith}).} \textit{Let
$\varphi\in\mathcal{L-P^*}$. Suppose that the order of $\varphi$
is less than $2$ and that $\varphi$ has exactly $2m$, $m>0$,
nonreal zeroes. Let $\sigma\in\mathbb{R}$. Then the following
statements are equivalent.}
\begin{itemize}
\item[(a)] $\left(\dfrac{d}{dz}+\sigma\right)\varphi(z)\in\mathcal{L-P}$\textit{;}
\item[(b)] \textit{(i) $Z_{\mathbb{R}}(Q)=2m$, and (ii) if $\zeta_1\leqslant\zeta_2\leqslant\ldots
\leqslant\zeta_{2m}$ are the real zeroes of $Q[\varphi]$, then
$\varphi'/\varphi(\zeta_{2j-1})\leqslant-\sigma$ and
$\varphi'/\varphi(\zeta_{2j})\geqslant-\sigma$, $1\leqslant
j\leqslant m$, and $\varphi(z)\neq0$ for $\zeta_{2j-1}\leqslant
z\leqslant\zeta_{2j}$, $1\leqslant j\leqslant m$;}
\end{itemize}
\textit{where $Z_{\mathbb{R}}(Q)$ denotes the number of real
zeroes of $Q[\varphi]$, counting multiplicities.}

\vspace{0.2cm}

Thus, if for some $\sigma\in\mathbb{R}$, the polynomial $\sigma
p(z)+p'(z)$ has only real zeroes, then by~Theorem~A, the function
$Q[p](z)$ has exactly $2m$ real zeroes.

\vspace{0.2cm}

In light of Theorem~1.1, G.\,Csordas, T.\,Craven and
W.\,Smith~\cite{CravenCsordasSmith} stated the following
conjecture, which was recently nicknamed by T.\,Sheil-Small the
Hawaii conjecture~\cite{Sheil-Small} (see also~\cite{Csordas}).

\vspace{0.2cm}

\noindent\textbf{The Hawaii conjecture.} \textit{If a real
polynomial $p$ has precisely $2m$ nonreal zeroes, then}
\begin{equation*}
Z_{\mathbb{R}}(Q)\leqslant2m,
\end{equation*}
\textit{where $Z_{\mathbb{R}}(Q)$ denotes the number of real
zeroes of $Q[p]$, counting multiplicities.}

\vspace{0.2cm}

In~\cite{Csordas}, Csordas writes that this conjecture could also
have been stated for entire functions in the class
$\mathcal{L-P^*}$.

Thus, the Hawaii conjecture, if it is true, gives an exhaustive
answer on the question of a relationship between the number of
nonreal zeroes of the polynomial $p$ and the number of real zeroes
of the function $Q[p]$.

\vspace{0.2cm}

In~\cite[Chapter~9]{Sheil-Small}, T.\,Sheil-Small suggests several
appealing ideas concerning this conjecture. He writes in his
preface: "As this conjecture relates closely to the topological
structure formed by the level curves on which the logarithmic
derivative is real, it is a problem of fundamental interest in
understanding the structure of real polynomials". In particular,
Sheil-Small showed that the conjecture holds when
\begin{itemize}
\item the polynomial $p$ has exactly $2$ non-real zeroes;
\item the level set $\Im\left(\dfrac{p'(z)}{p(z)}\right)<0$, in
the upper half-plane, is connected;
\item the polynomial $p$ has purely imaginary zeroes and the degree
of $p$ is $2,4,6,8$ or $10$.
\end{itemize}

In~\cite{DilcherStolarsky}, K.\,Dilcher and K.\,Stolarsky
investigated the relationship between the distribution of zeroes
of a polynomial, $p(z)$, and those of the "Wronskian of the
polynomial", $Wp(z)$, where
$Wp(z)\stackrel{def}{=}p(z)p''(z)-(p'(z))^2$. K.\,Dilcher and
K.\,Stolarsky established several general properties of the
polynomial $Wp(z)$ and its zeroes. For example, they showed that
if $d$ is the minimum distance between two consecutive real zeroes
of $p(z)$, then the imaginary part of the zeroes of $Wp(z)$ cannot
be less than $d\sqrt{3}/4$~\cite[Lemma~2.8]{DilcherStolarsky}.

K.\,Dilcher~\cite{Dilcher} studied the geometry of the zeroes of
$Wp(z)$ and proved the Hawaii conjecture for polynomials whose
zeroes are sufficiently well spaced~\cite[Theorem~2.4]{Dilcher}.
Also he showed that any \textit{real} zero of $Wp(z)$, which is
not a zero of the polynomial $p(z)$, must lie on or inside the
Jensen circle of some pair of complex zeroes of $p(z)$.

Finally, we also mention that recently J.\,Borcea and
B.\,Shapiro~\cite{BorceaShapiro} developed a general theory of
level sets, which may imply the validity of the Hawaii conjecture
as a special case. But this approach, so far, has not led to a
resolution of the Hawaii conjecture. We should note that recently
a conjecture of J.\,Borcea and B.\,Shapiro made
in~\cite{BorceaShapiro} was disproved by S.\,Edwards and
A.\,Hinkkanen~\cite{EdwardsHinkkanen}.

\vspace{0.2cm}

We remark that while the upper bound of the number of real zeroes
of $Q$ was only conjectured recently, the lower bound was known a
long time ago, at least in the following special case:

\vspace{0.2cm}

\noindent\textbf{Problem 133 (\cite{GunterKuzmin}).} \textit{Let
the real polynomial $f(x)$ have only real zeroes and suppose that
the polynomial $f(x)+a$, where $a\in\mathbb{R}\backslash\{0\}$,
has $2m$ nonreal zeroes. Prove that the equation
$$\left(f'(x)\right)^2-f(x)f''(x)-af''(x)=0$$ has at~least~$2m$ real
roots.}

\vspace{0.2cm}

If $p(z)=f(z)+a$, then it follows that $p$ has only simple zeroes
and exactly $2m$ nonreal zeroes. Moreover, $p'=f'$ has only real
zeroes. By Problem~133, $Q$ associated with the polynomial
$p(z)=f(z)+a$ has at least $2m$ real zeroes. In fact, $Q$ has
exactly $2m$ real zeroes~\cite[Theorem~1]{CravenCsordasSmith}. The
hint provided for Problem~$133$ in~\cite{GunterKuzmin} suggests
using Rolle's theorem. This approach ultimately leads us to a more
precise result, namely, the following proposition.

\begin{prepos}\label{proposition.1}
Suppose that the polynomial $p$ has~$2m$ nonreal zeroes and its
derivative~$p'$ has~$2m_1$ nonreal zeroes, then $Q$ has at least
$2m-2m_1$ real zeroes.
\end{prepos}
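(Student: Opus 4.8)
The plan is to use Rolle's Theorem together with the structure of $Q$ near the real zeros of $p$ and $p'$. First I would write $Q(z) = (p'(z)/p(z))' = (p''(z)p(z) - p'(z)^2)/p(z)^2$, so that the real zeros of $Q$ are precisely the real zeros of the polynomial $W(z) := p(z)p''(z) - p'(z)^2$ (the negative Wronskian of $p$ and $p'$) that are not also zeros of $p$; near a real zero of $p$ of multiplicity $k$, one checks that $p'/p$ decreases through a pole, so no contribution is lost there. The key point is that between any two consecutive real zeros of $p$ (or on either side of the extreme ones) the function $p'/p$ is real-analytic with a strictly positive derivative wherever $p' \neq 0$... no, more carefully: $p'/p$ is strictly decreasing on each interval between consecutive real poles, so to produce a zero of its derivative we must look at the real zeros of $p'$.

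Here is the counting strategy I would carry out. Let $p$ have real zeros with total multiplicity $N$ (so $\deg p = N + 2m$), located at points $x_1 < \dots < x_s$, and let $p'$ have real zeros of total multiplicity $N'$ (with $\deg p' = N + 2m - 1 = N' + 2m_1$). By Rolle's Theorem applied to $p$, strictly between consecutive distinct real zeros of $p$ there lies a real zero of $p'$; moreover a real zero of $p$ of multiplicity $k \geq 2$ is a zero of $p'$ of multiplicity $k-1$. I would track $p'/p$ on the real line: on each of the $s-1$ bounded intervals $(x_i, x_{i+1})$ and the two unbounded ones, $p'/p$ goes from $+\infty$ to $-\infty$ as one crosses each simple pole (with sign adjustments for higher-multiplicity poles — but one can reduce to the simple case or handle multiplicities directly since a pole of $p'/p$ of order $1$ always has residue equal to the multiplicity, which is positive). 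Thus $p'/p$ is $C^\infty$ and its derivative $Q$ changes sign appropriately; the zeros of $p'$ that are \emph{not} zeros of $p$ are exactly the interior critical points of $p'/p$, i.e., the real zeros of $W$ that escape into the "finite" region, and between two consecutive such real critical points on which $p'/p$ has a local max and a local min there must (again by Rolle, now applied to $p'$, or by the intermediate value theorem on $Q$) lie a real zero of $Q$.

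The clean way to organize this: count real zeros of $W = pp'' - (p')^2$ directly. On the real axis, wherever $p \neq 0$ we have $Q = W/p^2$, and $\operatorname{sign} Q = \operatorname{sign} W$. Apply Rolle's Theorem to the real-analytic function $p'/p$: it has exactly $s - 1 + (\text{number of interior real zeros of }p') $ ... I would instead argue that $p'$ has at most $N' - s + 1 + 2m_1$ ... let me state the mechanism cleanly: $p'$ has $\deg p' = N + 2m - 1$ zeros total; of these, $2m_1$ are nonreal and $N - s$ are forced to coincide with multiple real zeros of $p$ (contributing to poles, not critical points of $p'/p$); Rolle forces at least $s - 1$ further real zeros of $p'$ strictly between the distinct real zeros of $p$. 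The remaining real zeros of $p'$ — there are at least $(N + 2m - 1) - 2m_1 - (N - s) - (s-1) = 2m - 2m_1$ of them, once we account for the Rolle zeros and nonreal zeros — actually these Rolle zeros \emph{are} among the ones we want. Let me recount: the $s-1$ Rolle zeros of $p'$ sit strictly inside $(x_i,x_{i+1})$, hence are not zeros of $p$, hence are points where $p'/p$ has a local extremum, i.e., $Q$ vanishes there. That already gives $s - 1$ real zeros of $Q$, but we need $2m - 2m_1$, which can be larger. The extra ones come from the behavior of $p'/p$ at $\pm\infty$ (where $p'/p \to 0$) combined with multiplicities: between consecutive local extrema of $p'/p$ there is a zero of $Q$, and one counts local extrema via the nonreal-zero deficit. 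The main obstacle, and the step I expect to require the most care, is precisely this bookkeeping of multiplicities — ensuring that multiple real zeros of $p$ (which are poles of $p'/p$ of higher order but still "sign-reversing") and multiple real zeros of $p'$ are counted consistently so that the final tally is exactly $\geq 2m - 2m_1$ rather than something weaker; I would handle this by a continuity/perturbation argument (perturb $p$ to have only simple zeros, prove the statement there by the clean Rolle count, then pass to the limit, checking that real zeros of $Q$ do not disappear in the limit).
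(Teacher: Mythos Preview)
Your overall strategy --- apply Rolle's theorem to $p'/p$ and count the real zeros of $p'$ against those of $p$ --- is exactly the paper's route to the lower bound (Remark~\ref{remark.2.11}: the lower estimate in Theorem~\ref{technical.theorem.condition.A} does not use property~A and comes from pure Rolle counting via Lemma~\ref{lemma.1}, Lemma~\ref{biggest.technical.theorem}, and Corollary~\ref{corol.4}). But your execution contains a confusion that derails the count. You write that ``the zeros of $p'$ that are not zeros of $p$ are exactly the interior critical points of $p'/p$, i.e., the real zeros of $W$,'' and later that the Rolle zeros of $p'$ ``are points where $p'/p$ has a local extremum, i.e., $Q$ vanishes there.'' Both assertions are false: a real zero of $p'$ away from zeros of $p$ is a \emph{zero} of $p'/p$, not a critical point, whereas the critical points of $p'/p$ are precisely the zeros of $Q=(p'/p)'$ --- the very objects you are trying to produce. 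The correct Rolle step is Lemma~\ref{lemma.1}: if $\beta_1<\beta_2$ are consecutive real zeros of $p'$ and $p\neq 0$ on $[\beta_1,\beta_2]$, then $p'/p$ vanishes at both endpoints, hence $Q$ has an odd number of zeros in $(\beta_1,\beta_2)$. Combined with the fact that a zero of $p'$ of multiplicity $M$ (not a zero of $p$) is a zero of $Q$ of multiplicity $M-1$, this yields Lemma~\ref{biggest.technical.theorem}: $q$ zeros of $p'$ on a $p$-free interval force at least $q-1$ zeros of $Q$ there. Summing over the intervals between consecutive distinct real zeros of $p$, and handling the two unbounded rays by the parity argument of Corollary~\ref{corol.4}, gives exactly $E(p')=2m-2m_1$.

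Your fallback perturbation is unnecessary and not obviously safe. Perturbing $p$ to simple real zeros does nothing to control multiple real zeros of $p'$ lying away from zeros of $p$ (those can split into complex conjugate pairs, increasing $2m_1$ for the perturbed polynomial), and real zeros of $W_\varepsilon$ that cluster near a former multiple zero of $p$ are absorbed into the pole of $Q$ in the limit, so the semicontinuity $Z_{\mathbb{R}}(W_\varepsilon)\leqslant Z_{\mathbb{R}}(W)$ does not transfer to $Q$ without losing exactly the $2(N-s)$ you cannot afford. The paper bypasses all of this by doing the multiplicity bookkeeping directly, and that is also what makes your sketch close once the zero/critical-point confusion is fixed.
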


Thus, the lower bound of the number of real zeroes of~$Q$ can be
easily determined. Moreover, as one can see, this bound depends
not only on the number of nonreal zeroes of the polynomial $p$ but
also on the number of nonreal zeroes of~$p'$. However, this simple
fact, Proposition~\ref{proposition.1}, was not well known.

\vspace{0.3cm}

In this paper, we establish the lower bound on the number of real
zeroes of the function $Q$ not only for real polynomials but also
for all functions in $\mathcal{L-P^*}$
(Theorem~\ref{theorem.the.Hawaii.conjecture}), so, \textit{ipso
facto}, we prove Proposition~\ref{proposition.1} for functions in
$\mathcal{L-P^*}$. But the main goal of the present work is to
estimate the number of real critical points of logarithmic
derivatives of real entire functions from above. Despite
geometrical importance of this problem, the basic and nearly
unique instrument we use in our investigation is Rolle's theorem.
Virtually, all results received in this paper are nontrivial
consequences of that theorem.

In Section~\ref{section.main}, we introduce some definitions and
auxiliary (old and new) facts, which we use throughout the paper.
We also introduce a specific property, \textit{property~A} (see
Definition~\ref{def.cond.A.axis}), of entire functions. Although
not every function in $\mathcal{L-P^*}$ possesses
\textit{property~A}, for a given
function~$\varphi\in\mathcal{L-P^*}$, one can always find another
function~$\psi_*$ in~$\mathcal{L-P^*}$ with \textit{property~A},
which has the same zeroes and the same associated
function~$Q[\varphi]=Q[\psi^*]$ (Theorem~\ref{Th.analog.of.PW}).

For a given entire function $\varphi$ we consider the function
$Q_1\stackrel{def}{=}Q_1[\varphi]\stackrel{def}{=}Q[\varphi']$
together with the function $Q[\varphi]$. In
Section~\ref{section:finite.intervals}, we study a relationship
between the number of real zeroes of the functions $Q$ and $Q_1$
on finite intervals.

Section~\ref{section:infinite.intervals} contains a comprehensive
account on the distribution of real critical points of logarithmic
derivatives of entire functions in $\mathcal{L-P^*}$. In
Sections~\ref{subsection:half-infinite.intervals}
and~\ref{subsection:functions.at most.one.real.zero}, we obtain
further particular results on the relationship between the number
of real zeroes of the functions $Q[\varphi]$ and $Q_1[\varphi]$.
Namely, we investigate this relationship on half-infinite
intervals free of poles of these functions and on the entire real
axis when the first derivative of $\varphi$ has no real zeroes.

In Section~\ref{section:general.theorems}, we obtain our main
result for functions in the class $\mathcal{L-P^*}$ with
\textit{property~A}. We prove the following inequalities, which
provide a connection between the number of nonreal zeroes of the
functions $\varphi$ and $\varphi'$ and the number of real zeroes
of the functions $Q[\varphi]$ and $Q_1[\varphi]$:
\begin{equation*}
2m-2m_1\leqslant
Z_{\mathbb{R}}(Q)\leqslant2m-2m_1+Z_{\mathbb{R}}(Q_1).
\end{equation*}
Here $2m$ and $2m_1$ are the number of nonreal zeroes of $\varphi$
and $\varphi'$, respectively. These inequalities together with one
technical result mentioned above, Theorem~\ref{Th.analog.of.PW},
allow us to prove the Hawaii conjecture,
Theorem~\ref{theorem.the.Hawaii.conjecture}.

In Section~\ref{section:main.results.for.U_2n*}, we extend results
of Section~\ref{section:general.theorems} to additional classes of
real entire functions of finite order with finitely many nonreal
zeroes.

\setcounter{equation}{0}


\section{The classes $U_{2n}^*$ and $\mathcal{L-P^*}$. Definitions and basic properties.}\label{section.main}

In this section, we give some necessary properties of functions in
the classes $U_{2n}^*$, $n\geqslant0$, and in particular, in the
class $\mathcal{L-P^*}$ and describe one important subclass of
$\mathcal{L-P^*}$.

\begin{definition}[\cite{Alander}, see also~\cite{EdwardsHellerstein,{HellersteinWilliamson1977}}]\label{def.class.V_2n}
The function $f$ is said to be in the class $V_{2n}$, $f\in
V_{2n}$, if
\begin{equation*}\label{def.V_2n.function}
f(z)=cz^de^{-\gamma
z^{2n+2}+q(z)}\prod^{\omega}_{j=1}\left(1-\dfrac{z}{\alpha_j}\right)e^{\tfrac{z}{\alpha_j}+\tfrac{z^2}{2\alpha_j^2}+\ldots+\tfrac{z^{l}}{l\alpha_j^{l}}},\quad
0\leqslant\omega\leqslant\infty.
\end{equation*}
where $\gamma\geqslant0$, $q$ is a real polynomial with $\deg
q\leqslant2n+1$, $l\leqslant2n+1$,
$\alpha_j\in\mathbb{R}\,\,\,\forall j$,
$\sum_j|\alpha_j|^{-l-1}<\infty$, $d$ is a nonnegative integer,
$c\in\mathbb{R}$, $c\neq0$.
\end{definition}

\begin{definition}\label{def.U_2n.function}
The class $U_{2n}$ is defined as follows
\begin{equation*}\label{def.U_0}
U_0\stackrel{def}{=}V_0
\end{equation*}
\begin{equation*}\label{def.U_2n}
U_{2n}\stackrel{def}{=}V_{2n}\backslash V_{2n-2}
\end{equation*}
for $n\geqslant0$.
\end{definition}

\begin{definition}[\cite{Laguerre,{Polya}}]\label{def.class.LP}
The class $U_0$ is called the Laguerre-P\'olya class
$\mathcal{L-P}$.
\end{definition}

By the classical results of Laguerre~\cite{Laguerre} and
P\'olya~\cite{Polya} (see also~\cite{Lindwart_Polya}
and~\cite{Levin}) $\varphi\in\mathcal{L-P}$ if and only if
$\varphi$ can be uniformly approximated on disks about the origin
by a sequence of polynomials with only real zeroes. Thus, this
fact it follows from this result that the class $\mathcal{L-P}$ is
closed with respect to differentiation.

\begin{definition}\label{def.U_2n*.function}
The function $\varphi$ is in the class $U_{2n}^{*}$ if
$\varphi=pf$ where $f\in U_{2n}$ and $p$ is a real polynomial with
no real zeroes. The class $U_0^{*}$ is often denoted by
$\mathcal{L-P^*}$ (see, for
instance,~\cite{CravenCsordasSmith,{Csordas}}).
\end{definition}

The class $U_0^{*}=\mathcal{L-P^*}$ plays a central role in our
investigation. All real polynomials belong to this class. Also
note that each class $U_{2n}^*$ is closed under differentiation
(see~\cite[Corollary~2.12]{EdwardsHellerstein}).
\begin{propos}\label{prop.diff.U_2n*.function}
If $\varphi\in U_{2n}^*$, then $\varphi'\in U_{2n}^*$.
\end{propos}

\begin{notation}\label{notation.number.of.zeroes}
For $\varphi\in U_{2n}^*$, by $Z_\mathbb{C}(\varphi)$ we denote
the number of nonreal zeroes of $\varphi$, counting
multiplicities. If $f$ is a real meromorphic function having only
a finite number of real zeroes, then $Z_\mathbb{R}(f)$ will denote
the number of real zeroes of $f$, counting multiplicities. In the
sequel, we also denote the number of zeroes of the function $f$ in
an~interval~$(a,b)$ and at~a~point~$\alpha\in\mathbb{R}$ by
$Z_{(a,b)}(f)$ and $Z_{\{\alpha\}}(f)$, respectively, thus
$Z_\mathbb{R}(f)=Z_{(-\infty,+\infty)}(f)$. Generally, the number
of zeroes of $f$ on a set $X$ where $X$ is a subset of $\mathbb{R}$ will be denoted by $Z_X(f)$.
\end{notation}

\subsection{Extra zeroes of functions in $U_{2n}^*$ }\label{susection:extra.zeroes}

Let $\varphi\in U_{2n}^*$. Between any two consecutive real
zeroes, say $a$ and $b$, $a<b$, of~$\varphi$,~$\varphi'$ has an
\textit{odd} number of real zeroes (and \textit{a fortiori} at
least one) by Rolle's theorem. Counting all zeroes with
multiplicities, suppose that $\varphi'$ has $2r+1$ zeroes between
$a$ and $b$. Then we will say that $\varphi'$ has $2r$
\textit{extra zeroes} between $a$ and $b$. If $\varphi$ has the
largest zero $a_L$ (or the smallest zero $a_S$), then any real
zero $\varphi'$ in $(a_L,\infty)$ (and in $(-\infty, a_S)$) is
also called an \textit{extra zero} of $\varphi'$.

\begin{notation}\label{notaion.extra.zeroes}
The total number of extra zeroes of $\varphi'$ on the entire real
axis, counting multiplicities, will be denoted by $E(\varphi')$.
\end{notation}
\begin{note}\label{remark.2.1.a}
The multiple real zeroes of~$\varphi$ are not counted as real
extra zeroes of~$\varphi'$.
\end{note}

It turns out that if a function $\varphi\in U_{2n}^*$ has at most
finitely many zeroes, then its number of extra zeroes can be
calculated exactly.

\begin{theorem}\label{Theorem.number.of.extra.zeroes.type.I}
Let $p$ be a real polynomial and let $\varphi$ be the function
defined as follows.
\begin{equation}\label{finite.zeroes.function.type.I}
\varphi(z)\stackrel{def}{=}e^{az^{2n+1}+q(z)}p(z),\quad
a\neq0\in\mathbb{R},\,\,\,n\in\mathbb{N}\cup\{0\},
\end{equation}
where $q$ and $p$ are real polynomials and $\deg q\leqslant2n$.
Then
\begin{equation*}\label{Number.of.extra.zeroes.of.function.type.I}
E(\varphi')=\begin{cases}
&Z_\mathbb{C}(\varphi)-Z_\mathbb{C}(\varphi')+2n\ \ \qquad\text{if}\ \deg p=Z_\mathbb{C}(p),\\
&Z_\mathbb{C}(\varphi)-Z_\mathbb{C}(\varphi')+2n+1\ \ \ \text{if}\
\deg p>Z_\mathbb{C}(p).
\end{cases}
\end{equation*}
\end{theorem}
\begin{proof}
From~\eqref{finite.zeroes.function.type.I}, it is easy to see that
$\varphi'$ has exactly $\deg p+2n$ zeroes, counting
multiplicities.

If $\varphi(z)\neq0$ for $z\in\mathbb{R}$, then $\deg
p=Z_\mathbb{C}(p)=Z_\mathbb{C}(\varphi)$ and all real zeroes of
$\varphi'$ are extra zeroes. Therefore,
$E(\varphi')=Z_\mathbb{C}(\varphi)+2n-Z_\mathbb{C}(\varphi')$.

If $\varphi$ has at least one real zero, then $\deg
p=Z_\mathbb{C}(\varphi)+r$, where $r\,(>0)$ is the number of real
zeroes of $\varphi$, counting multiplicities. Therefore,
$\varphi'$ has exactly
$Z_\mathbb{C}(\varphi)+2n+r-Z_\mathbb{C}(\varphi')$ real zeroes,
counting multiplicities, $r-1$ of which are guaranteed by Rolle's
theorem. Thus,
$E(\varphi')=Z_\mathbb{C}(\varphi)+2n+1-Z_\mathbb{C}(\varphi')$.
\end{proof}

In the same way, the following two theorems can be proved.

\begin{theorem}\label{Theorem.number.of.extra.zeroes.type.II}
Let $p$ be a real polynomial and let $\varphi$ be the function
defined as follows.
\begin{equation*}
\varphi(z)\stackrel{def}{=}e^{az^{2n}+q(z)}p(z),\quad a>0,\,\,\,
n\in\mathbb{N}\cup\{0\},
\end{equation*}
where $q$ is a real polynomial of degree at most\footnote{If
$n=0$, then $q(z)\equiv0$.} $2n-1$ and $p$ is a real polynomial.
Then
\begin{equation}\label{Number.of.extra.zeroes.of.function.type.II}
E(\varphi')=\begin{cases}
&Z_\mathbb{C}(\varphi)-Z_\mathbb{C}(\varphi')+2n-1\ \ \ \text{if}\ \deg p=Z_\mathbb{C}(p),\\
&Z_\mathbb{C}(\varphi)-Z_\mathbb{C}(\varphi')+2n\ \
\qquad\text{if}\ \deg p>Z_\mathbb{C}(p).
\end{cases}
\end{equation}
\end{theorem}

\begin{theorem}\label{Theorem.number.of.extra.zeroes.type.III}
Let $p$ be a real polynomial and let $\varphi$ be the function
defined as follows.
\begin{equation}\label{finite.zeroes.function.type.III}
\displaystyle\varphi(z)\stackrel{def}{=}e^{-az^{2n+2}+q(z)}p(z),\quad
a>0,\,\,\,n\in\mathbb{N}\cup\{0\},
\end{equation}
where $q$ and $p$ are real polynomials and $\deg q\leqslant2n+1$.
Then
\begin{equation*}\label{Number.of.extra.zeroes.of.function.type.III}
E(\varphi')=\begin{cases}
&Z_\mathbb{C}(\varphi)-Z_\mathbb{C}(\varphi')+2n+1\ \ \ \text{if}\ \deg p=Z_\mathbb{C}(p),\\
&Z_\mathbb{C}(\varphi)-Z_\mathbb{C}(\varphi')+2n+2\ \ \ \text{if}\
\deg p>Z_\mathbb{C}(p).
\end{cases}
\end{equation*}
\end{theorem}

We should note that
Theorems~\ref{Theorem.number.of.extra.zeroes.type.I},~\ref{Theorem.number.of.extra.zeroes.type.II}
and~\ref{Theorem.number.of.extra.zeroes.type.III} are parts of
Lemma~2.8 of the work~\cite{EdwardsHellerstein}. We state these
parts in terms of our definition of extra zeroes. Another part of
Lemma~2.8 in~\cite{EdwardsHellerstein} concerns functions with
infinitely many zeroes.

\begin{theorem}[\cite{EdwardsHellerstein}]\label{Theorem.number.of.extra.zeroes.type.IV}
Let $\varphi$ be in $U_{2n}^*$.
\begin{itemize}
\item If $\varphi$ has infinitely many positive and negative
zeroes, then
\begin{equation*}\label{Number.of.extra.zeroes.of.function.type.IV.1}
E(\varphi')= Z_\mathbb{C}(\varphi)-Z_\mathbb{C}(\varphi')+2n.
\end{equation*}
\item If $\varphi$ has infinitely many zeroes but only finitely many positive or negative
zeroes, then
\begin{equation*}\label{Number.of.extra.zeroes.of.function.type.IV.2}
Z_\mathbb{C}(\varphi)-Z_\mathbb{C}(\varphi')+2n\leqslant
E(\varphi')\leqslant
Z_\mathbb{C}(\varphi)-Z_\mathbb{C}(\varphi')+2n+1.
\end{equation*}

\end{itemize}

\end{theorem}

For functions in the class $U_0^*=\mathcal{L-P^*}$, this theorem
was established by T.\,Craven, G.\,Csordas, and W.\,Smith
in~\cite[p.~325]{CravenCsordasSmith2}.

\subsection{Logarithmic derivatives of entire functions in $U_{2n}$}\label{section:log.deriv}

Given a function $\varphi$, the following function
\begin{equation*}\label{logarithmic.derivative}
L(z)\stackrel{def}{=}\dfrac{d\ln\varphi(z)}{dz}=\dfrac{\varphi'(z)}{\varphi(z)}
\end{equation*}
is called the logarithmic derivative of $\varphi$.

The following important fact was proved
in~\cite[Lemma~4]{BergweilerEremenko}.
\begin{theorem}\label{Theorem.logarithmic.derivative.representation}
Let $\varphi$ be a function of class $U_{2n}$. Then the
logarithmic derivative of $\varphi$ has a representation
\begin{equation}\label{logarithmic.derivative.representation.1}
\dfrac{\varphi'(z)}{\varphi(z)}=h(z)\mu(z),
\end{equation}
where $h$ is a real polynomial, $\deg h=2n$, the leading
coefficient of $h$ is negative, and $\mu\not\equiv0$ is a function
with nonnegative imaginary part in the upper half-plane of the
complex plane.
\end{theorem}

As was shown in~\cite{BergweilerEremenko}, if $\varphi\in U_{2n}$
has no zeroes, then there can be only three situations.
\begin{itemize}
\item
$\varphi(z)=e^{-az^{2n+2}+q(z)}$, where $a>0$ and $q$ is a real
polynomial, $\deg q\leqslant2n+1$. Then
$\varphi'(z)/\varphi(z)=-a(2n+2)z^{2n+1}+q'(z)$ is a real
polynomial of an odd degree and, therefore, it has a real zero
$\beta$. So one can put
$$
h(z)=\dfrac{-a(2n+2)z^{2n+1}+q'(z)}{z-\beta}\quad\text{and}\quad\mu(z)=z-\beta.
$$
\item
$\varphi(z)=e^{az^{2n+1}+q(z)}$, where $a\neq0\in\mathbb{R}$ and
$q$ is a real polynomial, $\deg q\leqslant2n$. Then one can set
$h(z)=-|a|(2n+1)z^{2n}+q'(z)$ and $\mu(z)=-\sgn a$.
\item $\varphi(z)=e^{az^{2n}+q(z)}$, where $a>0$ and $q$ is a real
polynomial, $\deg q\leqslant2n-1$. Then we set
$h(z)=-2naz^{2n}-zq'(z)$ and $\mu(z)=-1/z$.
\end{itemize}

From now on, assume that $\varphi\in U_{2n}$ has at least one real
zero~$\alpha_0$. If $\varphi$ has only finitely many negative
zeroes and $\varphi(z)\to0$ as $z\to-\infty$, then we
follow~\cite{BergweilerEremenko} to consider $-\infty$ as a zero
of $\varphi$. Similarly, we consider $+\infty$ as a zero of
$\varphi$ if $\varphi$ has only finitely many positive zeroes and
$\varphi(z)\to0$ as $z\to+\infty$. We arrange the zeroes into an
increasing sequence $\{\alpha_j\}$, where each zero occurs once,
disregarding multiplicity. The range of the subscript $j$ will be
$M<j<N$, where $-\infty\leqslant M<0\leqslant N\leqslant+\infty$,
with $\alpha_{M+1}=-\infty$ and $\alpha_{N-1}=+\infty$ in the
cases described above.

By Rolle's theorem, each open interval $(\alpha_j,\alpha_{j+1})$ contains a zero $\beta_j$ of
$\varphi'$. To make a definite choice, we take for $\beta_j$ the
largest zero in this interval. Each $\beta_j$ occurs in this
sequence only once, and we disregard multiplicity. As was shown
in~\cite{BergweilerEremenko}, in this case, the function $\mu$ in
Theorem~\ref{Theorem.logarithmic.derivative.representation} has
the following form
\begin{equation*}\label{logarithmic.derivative.representation.mu.1}
\displaystyle\mu(z)=\dfrac1{z-\alpha_{N-1}}\prod_{M<j<N-1}\dfrac{1-z/\beta_j}{1-z/\alpha_j},
\end{equation*}
where the factor $z-\alpha_{N-1}$ is omitted if
$\alpha_{N-1}=+\infty$ or $N=+\infty$, and the factor
$1-z/\alpha_{M+1}$ is omitted if $\alpha_{M+1}=-\infty$. If for
some $j\in(M,N-1)$ we have $\alpha_j=0$ or $\beta_j=0$, then
the~$j^{\mathrm{th}}$ factor has to be replaced by
$(z-\beta_j)/(z-\alpha_j)$.

As was shown by Chebotarev~\cite[p.~310]{Levin}, the function
$\mu$ as a function mapping the upper half-plane onto itself may
be represented in the form
\begin{equation*}\label{logarithmic.derivative.representation.mu.2}
\displaystyle\mu(z)=az+b+\sum_{j=M}^{N}A_j\left(\dfrac1{\alpha_j-z}-\dfrac1{\alpha_j}\right),
\quad-\infty\leqslant M<N\leqslant+\infty,
\end{equation*}
where $a\geqslant0$, $b\in\mathbb{R}$, $A_j\geqslant0$ where the
series
%
$\displaystyle\sum_{j=M}^{N}\dfrac{A_j}{\alpha_j^2}$
%
converges.

From this representation it follows that $\mu'(z)>0$ for every
real $z\neq\alpha_j$. In particular, if $\varphi\in
U_0=\mathcal{L-P}$, then $h(z)\equiv c<0$
in~\eqref{logarithmic.derivative.representation.1}. Therefore,
\begin{equation}\label{deriv.of.log.deriv.in.LP}
\left(\dfrac{\varphi'(z)}{\varphi(z)}\right)'=
c\mu'(z)<0\quad\text{for}\quad z\in\mathbb{R},\ \ \ z\neq\alpha_j.
\end{equation}
Thus, we proved the well-known fact (see, for
example,~\cite{CravenCsordasSmith,{CravenCsordasSmith2}}) that the
logarithmic derivative of a function in the Laguerre-P\'olya class
is a decreasing function on the intervals where it has no poles.

We use these facts in the next section and in
Section~\ref{section:main.results.for.U_2n*}.

\subsection{Derivatives of logarithmic derivatives. Entire functions with \textit{property}~A}\label{subsection.property.A}

Let $\varphi\in U_2n^*$ and let the function $Q=Q[\varphi]$
associated with $\varphi$ be defined as
\begin{equation}\label{main.function.2}
Q(z)\stackrel{def}{=}Q[\varphi](z)\stackrel{def}{=}\dfrac{d}{dz}\left(\dfrac{\varphi'(z)}{\varphi(z)}\right)=
\dfrac{\varphi(z)\varphi''(z)-\left(\varphi'(z)\right)^2}{\left(\varphi(z)\right)^2}.
\end{equation}
We note that if $\varphi(z)=Ce^{\beta z}$, where
$C,\beta\in\mathbb{R}$, then $Q(z)\equiv0$. At the same time, all
functions of the form $Ce^{\beta z}$ belong to the
class~$U_0^*=\mathcal{L-P^*}$. Hence, we adopt the following
convention throughout this paper.

\vspace{0.2cm}

\noindent\textbf{Convention.} \textit{If
$\varphi\in\mathcal{L-P^*}$, then $\varphi$ is assumed not to be
of the form $\varphi(z)=Ce^{\beta z}$, $C,\beta\in\mathbb{R}$.}

\vspace{0.2cm}

Analogously to~\eqref{main.function.2}, we introduce the related
function
\begin{equation}\label{all.main.functions}
Q_1(z)\stackrel{def}{=}Q[\varphi'](z)\stackrel{def}{=}\dfrac{d}{dz}\left(\dfrac{\varphi''(z)}{\varphi'(z)}\right)=
\dfrac{\varphi'(z)\varphi'''(z)-\left(\varphi''(z)\right)^2}{\left(\varphi'(z)\right)^2}.
\end{equation}

Our interest is concentrated only on the number of \textit{real}
zeroes of the function $Q$, $Z_{\mathbb{R}}(Q)$, and on bounding
this number. Obviously, $Q[\varphi]$ has a finite number of real
zeroes if $\varphi$ has a finitely many zeroes. But generally
speaking, $Q[\varphi]$ may have infinitely many real zeroes.
However, for the function $\varphi$ in the class
$\mathcal{L-P^*}$, the function $Q[\varphi]$ has also a finite
number of real zeroes even if $\varphi$ has infinitely many
zeroes. This fact is essentially known
from~\cite{CravenCsordasSmith}, but we still include the proof for
completeness.

\begin{theorem}\label{Theorem.finitude.number.of.zeroes.of.Q}
Let $\varphi\in\mathcal{L-P^*}$. Then the function $Q$ has
finitely many real zeroes.
\end{theorem}
\begin{proof}
By definition, $\varphi$ is a product $\varphi=p\psi$, where
$\psi\in\mathcal{L-P}$ and $p$ is a real polynomial with no real
zeroes. Let $\deg p=2m\geqslant0$.

\vspace{1mm}

If $m=0$, then $p(z)\equiv\mathit{const}$. In this case,
$\varphi\in\mathcal{L-P}$. But it is well known (see, for
example,~\cite{CravenCsordasSmith,{CravenCsordasSmith2}}) that the
logarithmic derivative of a function from the Laguerre-P\'olya
class is a decreasing function on the intervals where it has no
poles. Therefore, $Q(z)<0$ for any real $z$, which is not a pole
of this function. Thus, if $m=0$, then $Q$ has no real zeroes.

\vspace{1mm}

We assume now that $m>0$. Observe that
\begin{equation*}\label{finitude.number.of.zeroes.of.Q.proof.2}
Q=Q[\varphi]=Q[p]+Q[\psi].
\end{equation*}
From this equality it follows that all zeroes of $Q$ are roots of
the equation
\begin{equation}\label{finitude.number.of.zeroes.of.Q.proof.3}
Q[p](z)=-Q[\psi](z).
\end{equation}

It is easy to see that if $p(z)=a_0z^{2m}+\ldots$ $(a_0\neq0)$,
then
\begin{equation*}\label{finitude.number.of.zeroes.of.Q.proof.4}
Q[p](z)=\dfrac{-2ma_0^2z^{4m-2}+\ldots}{a_0^2z^{4m}+\ldots}.
\end{equation*}
This formula shows that $Q[p](z)\to0$ whenever $z\to\pm\infty$ and
$Q[p](z)<0$ when $z$ is real and $|z|$ is sufficiently large.
Consequently, there exist two real numbers $a_1$ and $a_2$
$(a_1<a_2)$ such that $Q[p](z)<0$ for
$z\in(-\infty,a_1]\cup[a_2,+\infty)$. But since
$\psi\in\mathcal{L-P}$, we have $Q[\psi](z)<0$ for
$z\in\mathbb{R}$ outside the zeroes of $\psi$ as we mentioned
above. Thus, the right hand side of the
equation~\eqref{finitude.number.of.zeroes.of.Q.proof.3} is
positive for all $z\in\mathbb{R}$ but its left hand side is
negative for all $z\in(-\infty,a_1]\cup[a_2,+\infty)$. Therefore,
all real roots of the
equation~\eqref{finitude.number.of.zeroes.of.Q.proof.3} and,
consequently, all real zeroes of the function~$Q$ belong to the
interval $(a_1,a_2)$, and $Q(z)<0$ outside this interval and
outside the zeroes of $\psi$. Since real zeroes of a meromorphic
function are isolated, $Q$ has only finitely many real zeroes, as
required.
\end{proof}

\noindent In fact, the number of real zeroes of the function $Q$
associated with functions in the class $\mathcal{L-P^*}$ is even
(see Corollary~2).

For functions in the classes $U_{2n}^*$ with $n\geqslant1$,
Theorem~\ref{Theorem.finitude.number.of.zeroes.of.Q} is not valid.
That is, if $\varphi\in U_{2n}^*$ with $n\geqslant1$ has
infinitely many zeroes, then its associated function $Q[\varphi]$
may have infinitely many zeroes.
\begin{example}
Consider the function $f(z)=e^{az^2}\sin z$ with $a>0$. It is
clear that $f\in U_2^*$ and $f$ has infinitely many real zeroes
$\alpha_k=\pi k, k\in\mathbb{Z}$. In this case, the function
$Q[f]$ is as follows:
\begin{equation*}\label{example.for.inifinitely.many.zeroes}
Q[f](z)=-\dfrac1{\sin^2z}+2a.
\end{equation*}
Its zeroes are the roots of the equation $\sin^2z=\dfrac1{2a}$.
For $a\geqslant\dfrac12$, this equation has infinitely many roots:
\begin{equation}\label{example.for.inifinitely.many.zeroes.Zeros}
\zeta_k=\pm\arcsin\dfrac1{\sqrt{2a}}+\pi k,\quad k\in\mathbb{Z}.
\end{equation}
Thus, for $a\geqslant\dfrac12$, the function $Q[f]$ associated
with $f(z)=e^{az^2}\sin z\in U_2^*$ has infinitely many real
zeroes given by~\eqref{example.for.inifinitely.many.zeroes.Zeros}.
\end{example}
\begin{notation}\label{notation.zeroes.of.functions}
For convenience, we use $\alpha_j$ to denote the zeroes of the
function $\varphi$, $\beta_j$ to denote the zeroes of $\varphi'$,
and $\gamma_j$ to denote the zeroes of~$\varphi''$.
\end{notation}

The following simple fact is very important for the sequel.

\begin{propos}\label{prop.Q.at.alpha}
Let $\alpha\in\mathbb{R}$ be a zero of $\varphi\in U_{2n}^*$. For
all sufficiently small $\varepsilon>0$, the following inequality
holds
\begin{equation}\label{ineq.1}
Q(\alpha\pm\varepsilon)<0.
\end{equation}
\end{propos}
\begin{proof}
If $\alpha$ is a zero of $\varphi$ of multiplicity $M\geqslant1$,
then $\varphi(z)=(z-\alpha)^{M}\psi(z)$, where
$\psi(\alpha)\neq0$. Thus, we have
\begin{equation*}\label{ineq.0.1}
Q(z)=-\dfrac{M}{(z-\alpha)^2}+\dfrac{\psi(z)\psi''(z)-(\psi'(z))^2}{(\psi(z))^2}.
\end{equation*}
Consequently, the function $Q$ is negative in a small punctured
neighbourhood of $\alpha$ as~required.
\end{proof}

Furthermore, as was shown in~\cite[p.~418]{CravenCsordasSmith}
(see also the proof of
Theorem~\ref{Theorem.finitude.number.of.zeroes.of.Q}),
the~function~$Q[\varphi](z)$ associated with
$\varphi\in\mathcal{L-P^*}$ is negative for sufficiently
large~$z$. This fact and Proposition~\ref{prop.Q.at.alpha} imply
the following lemma which concerns the parity of the number of
real zeroes of $Q$ on the half-interval $[\alpha_L,+\infty)$ where
$\alpha_L$ is the largest zero of the function
$\varphi\in\mathbb{R}$.

\begin{lemma}\label{lemma.2}
Let $\varphi\in\mathcal{L-P^*}$. If $\varphi$ has the largest zero
$\alpha_L$ \emph{(}or the smallest zero $\alpha_S$\emph{)},
then~$Q$ has an even number of real zeroes in $(\alpha_L,+\infty)$
\emph{(}or in $(-\infty,\alpha_S)$\emph{)}, counting
multiplicities.
\end{lemma}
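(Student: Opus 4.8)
The plan is to argue exactly as in the proof of part~II of Lemma~\ref{lemma.1}, but with the finite interval replaced by the half-line $(\alpha_L,+\infty)$, using the behaviour of $Q$ at infinity already recorded in the proof of Theorem~\ref{finitude.number.of.zeros.of.Q}. First I would observe that, since $\alpha_L$ is the largest real zero of $\varphi$, we have $\varphi(z)\neq0$ for every $z>\alpha_L$, so $\varphi'/\varphi$ is real-analytic on $(\alpha_L,+\infty)$ and hence so is $Q=(\varphi'/\varphi)'$. In particular $Q$ is continuous on this half-line and, by Theorem~\ref{finitude.number.of.zeros.of.Q}, has only finitely many zeros in it.

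Next I would pin down the sign of $Q$ near the two ends of $(\alpha_L,+\infty)$. Near $\alpha_L$: writing $\varphi(z)=(z-\alpha_L)^{M}\psi(z)$ with $\psi(\alpha_L)\neq0$ and $M\geqslant1$, exactly as in the proof of Lemma~\ref{lemma.1}, one gets $Q(z)=-M/(z-\alpha_L)^2+(\psi(z)\psi''(z)-(\psi'(z))^2)/(\psi(z))^2$, whence $Q(\alpha_L+\varepsilon)<0$ for all sufficiently small $\varepsilon>0$; this is precisely inequality~\eqref{ineq.1}. Near $+\infty$: the proof of Theorem~\ref{finitude.number.of.zeros.of.Q} shows that $Q(z)<0$ for all $z$ outside a bounded interval, so $Q(z)<0$ for all sufficiently large $z$.

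Finally comes the parity argument. The function $Q$ is continuous on $(\alpha_L,+\infty)$, negative in a right-sided neighbourhood of $\alpha_L$ and negative for all large $z$, and it has only finitely many zeros there; hence it changes sign an even number of times on this half-line. A zero of $Q$ of even multiplicity produces no sign change, while a zero of odd multiplicity produces exactly one, so the number of zeros of odd multiplicity is even; consequently the total number of zeros of $Q$ in $(\alpha_L,+\infty)$, counted with multiplicities, is even. The case of the smallest zero $\alpha_S$ is entirely symmetric: $Q(\alpha_S-\varepsilon)<0$ for small $\varepsilon>0$ by the same local expansion, and $Q(z)<0$ as $z\to-\infty$ by the proof of Theorem~\ref{finitude.number.of.zeros.of.Q}, so $Q$ again has an even number of real zeros in $(-\infty,\alpha_S)$.

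I do not expect a genuine obstacle here; the only point needing mild care is distinguishing sign changes from zeros counted with multiplicity, which the observation above handles, and noting that $Q\not\equiv0$ by the standing Convention, so that "an even number of zeros" is a meaningful assertion.
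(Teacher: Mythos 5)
Your proof is correct and follows essentially the same route as the paper: establish $Q(\alpha_L+\varepsilon)<0$ via the local expansion underlying~\eqref{ineq.1}, invoke the negativity of $Q$ for large real $z$ from the proof of Theorem~\ref{finitude.number.of.zeros.of.Q}, and conclude evenness by a sign-change count. The only difference is that you spell out the parity argument (odd-multiplicity zeros versus sign changes) which the paper leaves implicit.
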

\begin{proof}
The inequality~\eqref{ineq.1} holds for any real zero of
$\varphi$, consequently, $Q$ is negative for $z$ sufficiently
close to $\alpha_L$ (or to $\alpha_S$). But it was already proved
in Theorem~\ref{Theorem.finitude.number.of.zeroes.of.Q} (see also
(3.11) in~\cite[p.415]{CravenCsordasSmith} and subsequent remark
there) that $Q(z)<0$ for all sufficiently large real $z$.
Therefore, $Q$~has an even number of zeroes
in~$(\alpha_L,+\infty)$ (and in~$(-\infty,a_S)$ if $\varphi$ has
the smallest real zero~$\alpha_S$), counting multiplicities, since
$Q(z)$ is negative for all real $z$ sufficiently close to the ends
of the interval~$(\alpha_L,+\infty)$~(or of the interval
$(-\infty,\alpha_S)$).
\end{proof}

Using this lemma and Proposition~\ref{prop.Q.at.alpha}, it is easy
to establish the following fact concerning the parity of the
number~$Z_{\mathbb{R}}(Q)$ for the function $Q$ associated with a
function in~$\mathcal{L-P^*}$.
\begin{corol}[Craven--Csordas--Smith \cite{CravenCsordasSmith}, p. 415]\label{corol.3}
If $\varphi\in\mathcal{L-P^*}$, then the function $Q$ associated
with $\varphi$ has an even number of real zeroes, counting
multiplicity.
\end{corol}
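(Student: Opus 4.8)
The plan is to read off the parity of $Z_{\mathbb R}(Q)$ from the sign pattern of $Q$ along the real axis, using only Theorem~\ref{finitude.number.of.zeros.of.Q} and the local behaviour of $Q$ at the real zeros of $\varphi$. By Theorem~\ref{finitude.number.of.zeros.of.Q} there are real numbers $a_1<a_2$ such that every real zero of $Q$ lies in $(a_1,a_2)$ and $Q(z)<0$ for $z\in(-\infty,a_1]\cup[a_2,+\infty)$. Since $Q$ is real-analytic away from its poles, a real zero of $Q$ is a point of sign change precisely when its multiplicity is odd; hence $Z_{\mathbb R}(Q)$ has the same parity as the total number of points at which $Q$ changes sign on $\mathbb R$.

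Next I would locate the poles of $Q$. By \eqref{main.function.2} the poles of $Q$ on $\mathbb R$ are exactly the real zeros of $\varphi$, and since $\varphi$ is analytic and, by our convention, not identically zero, only finitely many of them — say $\alpha_1<\dots<\alpha_s$ — lie in $[a_1,a_2]$. At each such $\alpha_i$, writing $\varphi(z)=(z-\alpha_i)^{M}\psi(z)$ with $\psi(\alpha_i)\neq0$ gives $Q(z)=-M/(z-\alpha_i)^2+(\psi''\psi-(\psi')^2)/\psi^2$, so $Q(z)\to-\infty$ as $z\to\alpha_i$, exactly as in the derivation of \eqref{ineq.1}. (Note that $a_1$ and $a_2$ themselves are not real zeros of $\varphi$, since $Q(a_1)$ and $Q(a_2)$ are finite.)

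Now split $\mathbb R$ by these poles. Outside $(a_1,a_2)$ the function $Q$ is continuous and strictly negative, so it has no sign change there. On each of the intervals $[a_1,\alpha_1)$, $(\alpha_i,\alpha_{i+1})$ for $1\leqslant i\leqslant s-1$, and $(\alpha_s,a_2]$, the function $Q$ is continuous, and by the previous paragraph together with $Q(a_1)<0$ and $Q(a_2)<0$, it is negative at \emph{both} endpoints of the interval; a continuous real function negative at both ends of an interval changes sign there an even number of times. Moreover $Q$ does not change sign across any pole $\alpha_i$, being negative on both sides of it. Summing over all these intervals, the total number of sign changes of $Q$ on $\mathbb R$ is even, and therefore $Z_{\mathbb R}(Q)$ is even.

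I do not expect a serious obstacle; the only point needing care is the bookkeeping when $\varphi$ has infinitely many real zeros, which is handled by the fact that all real zeros of $Q$, and hence all its sign changes, are confined to the bounded interval $(a_1,a_2)$, which contains only finitely many real zeros of $\varphi$. (Alternatively, one could partition $(a_1,a_2)$ by the real zeros of $\varphi'$ and assemble the claim from Lemma~\ref{lemma.1}, Lemma~\ref{lemma.2} and Corollary~\ref{corol.1}, but the sign-pattern argument above is shorter and self-contained.)
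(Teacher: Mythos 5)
Your proof is correct and is essentially the paper's argument: both rest on the two facts that $Q$ tends to $-\infty$ at every real zero of $\varphi$ (the computation behind \eqref{ineq.1}) and that $Q(z)<0$ for all large real $z$, and then read off the even parity of the zero count piece by piece between these points of negativity. Your only variation is organizational — you confine the bookkeeping to the compact window $(a_1,a_2)$ supplied by Theorem~\ref{finitude.number.of.zeros.of.Q} and count sign changes there, whereas the paper partitions by consecutive real zeros of $\varphi$ and uses Lemma~\ref{lemma.2} for the unbounded pieces.
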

\begin{proof}
In fact, if $\varphi$ has no real zeroes, then $Q$ has no real
poles, and the number $Z_\mathbb{R}(Q)$ is even, since $Q(z)<0$
for all sufficiently large real $z$.

If $\varphi$ has only one real zero $\alpha$, then, according to
Lemma~\ref{lemma.2}, $Q$ has an even number of zeroes in each of
the intervals~$(-\infty,\alpha)$ and~$(\alpha,+\infty)$. Thus,
$Z_\mathbb{R}(Q)$ is also even in this case.

Let $\varphi$ have at least two distinct real zeroes. If
$\alpha_j$ and $\alpha_{j+1}$ are two consecutive zeroes of
$\varphi$, then, according to~\eqref{ineq.1}, $Q$ has an even
number of zeroes, counting multiplicities, in the interval
$(\alpha_j,\alpha_{j+1})$. If $\varphi$ has the largest (or/and
the smallest) real zero, say $\alpha_L$ ($\alpha_S$), then, by
Lemma~\ref{lemma.2}, $Q$ has an~even number of real zeroes,
counting multiplicities, in $(a_L,+\infty)$ (and in
$(-\infty,\alpha_S)$). Therefore, the number $Z_\mathbb{R}(Q)$ is
even.
\end{proof}
\begin{note}\label{remark.2.3}
Analogously, the function $Q_1$ associated with a function in the
class $\mathcal{L-P^*}$ has an even number of real zeroes,
counting multiplicities, since the class $\mathcal{L-P^*}$ is
closed under differentiation by
Proposition~\ref{prop.diff.U_2n*.function}.
\end{note}

As we will see in Section~\ref{section:main.results.for.U_2n*},
the functions $Q[\varphi]$ associated with functions $\varphi$ in
the classes $U_{2n}^*$ with $n\geqslant1$ can be positive for all
sufficiently large real~$z$. Moreover, $Q[\varphi]$ can have an
odd number of zeroes. However, in some cases $Q[\varphi](z)$ can
also be negative for all sufficiently large real~$z$. In
Section~\ref{section:main.results.for.U_2n*}, we use the following
theorem.

\begin{theorem}\label{Theorem.Q.for.U_2n*.at.plus.infinity}
Let $\varphi$ be in $U_{2n}^*$ with $n\geqslant1$. If $\varphi$
has the largest zero $\alpha_L$ and $\varphi'$ has an~odd number
of zeroes, counting multiplicities, in the interval
$(\alpha_L,+\infty)$, then $Q[\varphi](z)<0$ for all sufficiently
large positive~$z$.
\end{theorem}
\begin{proof}
By definition, $\varphi$ belongs to $U_{2n}^*$ if $\varphi=pf$,
where $p$ is a real polynomial with no real zeroes and $f\in
U_{2n}$. According to
Theorem~\ref{Theorem.logarithmic.derivative.representation}
(see~\eqref{logarithmic.derivative.representation.1}), the
logarithmic derivative of $f$ can be represented in the form
$f'/f=h\mu$, where $h$ is a real polynomial of
degree~$2n$ whose leading coefficient is negative, and $\mu$ is
the meromorphic function described in
Section~\ref{section:log.deriv}. Thus, for the derivative of the
logarithmic derivative of the function $\varphi$, we have
\begin{equation}\label{Theorem.Q.for.U_2n*.at.plus.infinity.proof.1}
Q[\varphi](z)=\left(\dfrac{\varphi'(z)}{\varphi(z)}\right)'=h(z)\mu'(z)+h'(z)\mu(z)+Q[p](z).
\end{equation}
Next, by assumption, the~function $\varphi'$ has an odd number of
zeroes in the interval $(\alpha_L,+\infty)$. Therefore, by
construction (see Section~\ref{section:log.deriv}), the function
$\mu$ has a unique simple zero $\beta$ in the interval
$(\alpha_L,+\infty)$ that coincides with one of the zeroes of
$\varphi'$. Moreover, as we noted in
Section~\ref{section:log.deriv}, $\mu'(z)>0$ in any interval
between its poles, therefore, $\mu(z)$ must be negative in the
interval $(\alpha_L,\beta)$ and positive in the interval
$(\beta,+\infty)$. Furthermore, the function $Q[p](z)$ is negative
for all sufficiently large real~$z$ as was shown in the proof of
Theorem~\ref{Theorem.finitude.number.of.zeroes.of.Q}
(see~\eqref{finitude.number.of.zeroes.of.Q.proof.4}). At last, by
Theorem~\ref{Theorem.logarithmic.derivative.representation}, the
polynomial $h(z)$ and its derivative $h'(z)$ are negative for all
sufficiently large positive~$z$, since $\deg h=2n>0$ and the
leading coefficient of $h$ is negative.

Thus, we have shown that all the summands are
negative~at~$+\infty$
in~\eqref{Theorem.Q.for.U_2n*.at.plus.infinity.proof.1}, that is,
$Q[\varphi](z)<0$ for all sufficiently large positive~$z$, as
required.
\end{proof}

Theorem~\ref{Theorem.Q.for.U_2n*.at.plus.infinity} is valid with
respective modification in the case when $\varphi$ has the
smallest zero~$a_S$ and $\varphi'$ has an odd number of zeroes in
the interval $(-\infty,\alpha_S)$.
\begin{theorem}\label{Theorem.Q.for.U_2n*.at.minus.infinity}
Let $\varphi$ be in $U_{2n}^*$ with $n\geqslant1$. If $\varphi$
has the smallest zero $\alpha_S$ and $\varphi'$ has an~odd number
of zeroes, counting multiplicities, in the interval
$(-\infty,\alpha_S)$, then $Q[\varphi](z)<0$ for all sufficiently
large negative~$z$.
\end{theorem}

\vspace{3mm}

The following definition plays a crucial role in our investigation
and in the proof of the~Hawaii conjecture.
\begin{definition}\label{def.cond.A.point}
Let $\varphi\in\mathcal{L-P^*}$ and let $\alpha$ be a real zero of
$\varphi$. Suppose that $\beta_1$ and $\beta_2$,
$\beta_1<\alpha<\beta_2$, are real zeroes of $\varphi'$ such that
$\varphi'(z)\neq0$ for $z\in(\beta_1,\alpha)\cup(\alpha,\beta_2)$.
The function~$\varphi$ is said to possess \textbf{property A at
its real zero $\alpha$} if $Q$ has no real zeroes in at least one
of the intervals $(\beta_1,\alpha)$ and $(\alpha,\beta_2)$. If
$\alpha$ is the smallest zero of~$\varphi$, then set
$\beta_1=-\infty$, and if $\alpha$ is the largest zero
of~$\varphi$, then set $\beta_2=+\infty$.
\end{definition}
\begin{definition}\label{def.cond.A.axis}
A function $\varphi\in\mathcal{L-P^*}$ is said to possess
\textbf{property~A} if $\varphi$ possesses property~A at each of
its real zeroes. In particular, $\varphi$ without real zeroes
possesses property~A.
\end{definition}

To illustrate \textit{property~A}, we use a function
$\varphi\in\mathcal{L-P^*}$ of the form $\varphi(z)=e^{\lambda
z}p(z)$, where $\lambda>0$ and $p$ is a real polynomial. On
Figure~\ref{pic.1}, there is the graphic of the logarithmic
derivative of such a function $\varphi$ with \textit{property~A}.

\begin{figure}[ht]
\centering \includegraphics{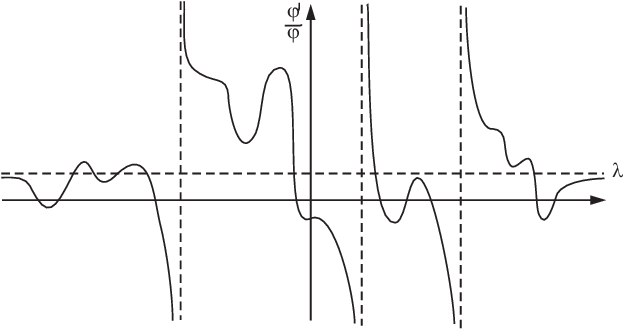} \caption{}
\label{pic.1}
\end{figure}

\textit{Property~A} is a very special property of entire
functions. It cannot be verified easily. However, for a given
function~$\varphi$ in~$\mathcal{L-P^*}$, we can always find
another function~$\psi_*$ in~$\mathcal{L-P^*}$ with
\textit{property~A}, which has the same zeroes and the same
associated function~$Q$. The~following theorem establishes the
existence of such a function.

\vspace{2mm}

\begin{theorem}\label{Th.analog.of.PW}
Let $\varphi\in\mathcal{L-P^*}$. Then there exist a real number
$\sigma_*$ such that $\psi_*(z)=e^{-\sigma_* z}\varphi(z)$
possesses \textit{property A}. Moreover, if
$Z_\mathbb{R}(Q)\neq0$, then
$Z_\mathbb{C}(\psi'_*)<Z_\mathbb{C}(\psi_*)$.
\end{theorem}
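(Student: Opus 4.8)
The plan is to exploit that passing from $\varphi$ to $\psi_\sigma:=e^{-\sigma z}\varphi$ changes neither $Q$ nor the zeros of $\varphi$. Write $g:=\varphi'/\varphi$, so that $g'=Q$; then $Q[\psi_\sigma]=(g-\sigma)'=Q$ for every $\sigma\in\mathbb R$, and $Z_{\mathbb C}(\psi_\sigma)=Z_{\mathbb C}(\varphi)=2m$. Moreover $\psi_\sigma'=e^{-\sigma z}\varphi\cdot(g-\sigma)$, so, away from the zeros of $\varphi$, the real zeros of $\psi_\sigma'$ are exactly the solutions of $g(z)=\sigma$ (the multiple real zeros of $\varphi$ contribute fixed zeros of $\psi_\sigma'$ of multiplicity one less, independent of $\sigma$). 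The poles of $g$ are the real zeros of $\varphi$, and near such a zero $\alpha$ one has $g(z)\sim (\text{mult }\alpha)/(z-\alpha)$, so $g\to+\infty$ as $z\to\alpha+0$ and $g\to-\infty$ as $z\to\alpha-0$; between two consecutive real zeros of $\varphi$ the function $g$ therefore runs from $+\infty$ down to $-\infty$. The whole statement is thus a study of the level sets of $g$.

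If $Z_{\mathbb R}(Q)=0$, then $Q$ has no real zeros at all, so $\varphi$ itself trivially possesses \emph{property~A} and $\sigma_*=0$ works (the second assertion being vacuous). Assume now $Z_{\mathbb R}(Q)\ne 0$; by Theorem~\ref{finitude.number.of.zeros.of.Q} the zeros of $Q$ are finite in number, and I set $\sigma_*:=\max\{\,g(c):Q(c)=0\,\}$. To see that $\psi_{\sigma_*}$ has \emph{property~A}, fix a real zero $\alpha$ of $\varphi$ and let $\alpha'$ be the next real zero of $\varphi$ to the right (or $\alpha'=+\infty$). If $Q$ has no zero in $(\alpha,\alpha')$, \emph{property~A} holds at $\alpha$; otherwise let $c_R$ be the smallest zero of $Q$ in $(\alpha,\alpha')$. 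By \eqref{ineq.1}, $Q<0$ just to the right of $\alpha$, and since $Q$ has no zero in $(\alpha,c_R)$ it is negative throughout $(\alpha,c_R)$; hence $g$ decreases strictly on $(\alpha,c_R)$ from $+\infty$ (as $z\to\alpha+0$) down to $g(c_R)\le\sigma_*$, so there is $\beta_2\in(\alpha,c_R]$ with $g(\beta_2)=\sigma_*$ and $g>\sigma_*$ on $(\alpha,\beta_2)$. Then $\beta_2$ is the smallest zero of $\psi_{\sigma_*}'$ exceeding $\alpha$, and $(\alpha,\beta_2)\subseteq(\alpha,c_R)$ contains no zero of $Q$. (The companion zero $\beta_1<\alpha$ of $\psi_{\sigma_*}'$ exists because $g$ runs from $+\infty$ to $-\infty$ between $\alpha$ and the previous real zero of $\varphi$, unless $\alpha$ is the smallest real zero of $\varphi$, in which case $\beta_1=-\infty$ by the convention of Definition~\ref{def.cond.A.point}.) So $\psi_{\sigma_*}$ possesses \emph{property~A} at each real zero, hence possesses \emph{property~A}.

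For the strict inequality when $Z_{\mathbb R}(Q)\ne 0$, note that the maximum defining $\sigma_*$ is attained at some zero $c_0$ of $Q$, and that $c_0$ cannot be a local minimum of $g$: a critical point realizing $\max_c g(c)$ that were a local minimum would, together with the prescribed $\pm\infty$ behaviour of $g$ at the two adjacent poles, force $g$ to be identically $\sigma_*$ on an interval, hence (by analyticity) everywhere, contradicting the existence of poles. Thus $c_0$ is a local maximum of $g$ (or, in the degenerate case in which $Q$ has a zero of even order at $c_0$, a point through which $g$ is monotone decreasing); in every case $g(c_0)=\sigma_*$ and $g'(c_0)=Q(c_0)=0$, so $g-\sigma_*$ vanishes at $c_0$ to order $\ge 2$, and therefore $\varphi'-\sigma_*\varphi=\varphi\cdot(g-\sigma_*)$, and so $\psi_{\sigma_*}'$, has a real zero of multiplicity $\ge 2$ at $c_0$. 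Now take $\varepsilon>0$ small. Since $\sigma_*$ is the largest critical value of $g$, no critical value of $g$ lies in $(\sigma_*,+\infty)$, so $\sigma\mapsto Z_{\mathbb C}(\psi_\sigma')$ is constant on $(\sigma_*,+\infty)$; and since $\varphi'-\sigma\varphi=-\sigma\varphi(1-g/\sigma)$ has, for $\sigma\to+\infty$, zeros converging to those of $\varphi$ (a Hurwitz argument, with real zeros staying real and nonreal zeros staying nonreal), that constant equals $Z_{\mathbb C}(\varphi)=2m$; in particular $Z_{\mathbb C}(\psi_{\sigma_*+\varepsilon}')=2m$. On the other hand, in passing from $\sigma=\sigma_*$ to $\sigma=\sigma_*+\varepsilon$ at least two zeros of $\varphi'-\sigma\varphi$ must leave the real axis near $c_0$, because once $\sigma>g(c_0)$ the equation $g=\sigma$ has one fewer real solution near $c_0$ than at $\sigma=\sigma_*$, the lost pair becoming a conjugate pair of nonreal zeros of $\varphi'-\sigma\varphi$. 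Hence
\[
Z_{\mathbb C}(\psi_{\sigma_*}')\;\le\;Z_{\mathbb C}(\psi_{\sigma_*+\varepsilon}')-2\;=\;2m-2\;<\;2m\;=\;Z_{\mathbb C}(\psi_{\sigma_*}),
\]
which is the claim.

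I expect the main obstacle to be the bifurcation bookkeeping in the last paragraph: making rigorous, at $c_0$ and at any other critical point of $g$ that realizes the value $\sigma_*$, exactly how the multiple real zero of $\varphi'-\sigma_*\varphi$ splits under the perturbation $\sigma\mapsto\sigma_*+\varepsilon$, and confirming that the net change of $Z_{\mathbb C}$ is an increase by an even number $\ge 2$ at each such point — this requires separate, though routine, handling of the degenerate (even-order) configurations. Secondary, more mechanical points are: the Hurwitz/continuity argument that $\sigma\mapsto Z_{\mathbb C}(\psi_\sigma')$ is locally constant off the critical values of $g$ and equals $2m$ for all large $\sigma$ (so that no nonreal zeros escape to, or arrive from, infinity); and the treatment of multiple real zeros of $\varphi$ and of the semi-infinite intervals when $\varphi$ possesses a largest or a smallest real zero.
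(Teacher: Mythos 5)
Your construction of $\sigma_*$ as the maximum of $\varphi'/\varphi$ over the real zeros of $Q$, and your verification that $\psi_{\sigma_*}$ possesses \emph{property~A}, are exactly the paper's: the paper observes that $\psi_*'/\psi_*>0$ on the interval between a real zero $\alpha$ of $\varphi$ and the first zero of $\psi_*'$ to its right, while $\psi_*'/\psi_*\leqslant 0$ at every real zero of $Q$; your level-set phrasing is the same argument, and that half is correct.

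For the second assertion you take a genuinely different route, and it has a real gap. The paper stays local: the point $\zeta_*$ where the maximum is attained is a multiple real zero of $\psi_*'$ not forced by Rolle's theorem, so $E(\psi_*')\geqslant 2$ (or $3$), and the Craven--Csordas--Smith inequalities \eqref{extraroots.general}--\eqref{extraroots} then give $Z_\mathbb{C}(\psi_*')<Z_\mathbb{C}(\psi_*)$. You instead rely on two global facts about the family $\psi_\sigma'=e^{-\sigma z}(\varphi'-\sigma\varphi)$: that $\sigma\mapsto Z_\mathbb{C}(\psi_\sigma')$ is constant on the whole ray $(\sigma_*,+\infty)$, and that its value for large $\sigma$ is $2m$. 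Neither is proved, and for non-polynomial members of $\mathcal{L-P^*}$ neither is routine. The absence of critical values of $g$ in $(\sigma_*,+\infty)$ only rules out real--nonreal transitions at finite points; it does not rule out nonreal zeros arriving from, or escaping to, infinity along an unbounded parameter range (the family is not even degree-stable: for $\varphi=e^{bz}p$ the polynomial factor of $\varphi'-\sigma\varphi$ drops degree at $\sigma=b$, which may lie in $(\sigma_*,+\infty)$), and when $\varphi$ has infinitely many real zeros one would need a uniform localization of the at most $2m$ nonreal zeros of $\varphi'-\sigma\varphi$. Fortunately your argument needs far less than this: the Hurwitz step you do justify already gives $Z_\mathbb{C}(\psi_{\sigma_*+\varepsilon}')\geqslant Z_\mathbb{C}(\psi_{\sigma_*}')+2$ for small $\varepsilon>0$ (the nonreal zeros of $\psi_{\sigma_*}'$ persist, and at least one conjugate pair is created at $c_0$ because $c_0$ is a local maximum or a decreasing inflection of $g$), while the Craven--Csordas--Smith theorem $Z_\mathbb{C}(\psi')\leqslant Z_\mathbb{C}(\psi)$ for $\psi\in\mathcal{L-P^*}$ gives $Z_\mathbb{C}(\psi_{\sigma_*+\varepsilon}')\leqslant 2m$; together these yield $Z_\mathbb{C}(\psi_{\sigma_*}')\leqslant 2m-2$ with no deformation beyond an arbitrarily small one. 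Either substitute that, or fall back on the paper's extra-zero count.
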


Before we start to prove the theorem, let us notice that the
function $\psi(z)=e^{-\sigma z}\varphi(z)$ has the same set of
zeroes as the function $\varphi(z)$, for any real $\sigma$. At the
same time, we have the following relations:
\begin{equation}\label{Th.analog.of.PW.proof.1}
\dfrac{\psi'(z)}{\psi(z)}=\dfrac{\varphi'(z)}{\varphi(z)}-\sigma,\qquad
Q[\psi]=Q[\varphi].
\end{equation}

Using these relations, one can easily give the main idea of the
proof of Theorem~\ref{Th.analog.of.PW}. In fact, let again, for
the sake of simplicity, $\varphi(z)=e^{\lambda z}p(z)$, where
$\lambda>0$ and $p$ is a real polynomial. Suppose that $\varphi$
does not possess \textit{property~A}. This means that its
logarithmic derivative $\varphi'/\varphi$ has critical points on
some intervals between a pole of $\varphi'/\varphi$, and there
exists a zero of $Q[\varphi]$, which is closer to a pole of
$\varphi'/\varphi$ than the closest to this pole zero of
$\varphi'/\varphi$ (see Figure~\ref{pic.2}).

From Figure~\ref{pic.2} one can see that the function
$\psi(z)=e^{-\sigma z}\varphi(z)$ possesses \textit{property~A},
for sufficiently large positive $\sigma$. Now we may reduce
$\sigma$ until the line $y=\sigma$ meets the first critical
point\footnote{That is, the critical point of $\varphi'/\varphi$
such that $\varphi'/\varphi$ has the maximal value among its
values at the points of local maximums of $Q[\varphi]$
(see~Figure~\ref{pic.2}).}~of~$\varphi'/\varphi$.

\begin{figure}[ht]
\centering \includegraphics{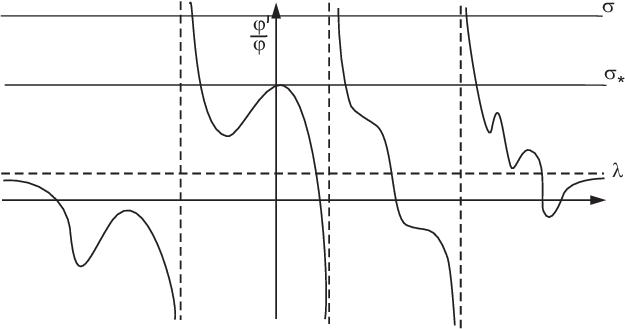} \caption{}
\label{pic.2}
\end{figure}

\begin{proof}
By Theorem~\ref{Theorem.finitude.number.of.zeroes.of.Q}, the
function $Q[\varphi]$ has finitely many real zeroes. If
$Q[\varphi]$ has no real zeroes, then, for any real $\sigma$,
$\psi(z)=e^{-\sigma z}\varphi(z)$ automatically possesses
\textit{property A}.

\vspace{3mm}

Let $Q[\varphi]$ have at least one real zero and let
$\zeta_1<\zeta_2<\ldots<\zeta_n$, $n\geqslant1$, be the
\textit{distinct} real zeroes~of~$Q[\varphi]$. We set
\begin{equation*}\label{Th.analog.of.PW.proof.1.1}
\displaystyle\sigma_*=\max_{1\leqslant i\leqslant n
}\left(\dfrac{\varphi'(\zeta_i)}{\varphi(\zeta_i)}\right)\quad\text{and}\quad
\psi_*(z)=e^{-\sigma_* z}\varphi(z).
\end{equation*}
Then from~\eqref{Th.analog.of.PW.proof.1} applied to $\psi_*$ it
follows that
\begin{equation}\label{Th.analog.of.PW.proof.1.3}
\dfrac{\psi'_*(\zeta_i)}{\psi_*(\zeta_i)}\leqslant0,\quad
i=1,\ldots,n.
\end{equation}
In this theorem, we denote by $\zeta_*$ any (fixed) zero of
$Q[\varphi]$ where the maximum
$\sigma_*=\displaystyle\max_{1\leqslant i\leqslant
n}(\varphi'(\zeta_i)/\varphi(\zeta_i))$ is attained. Thus, we have
\begin{equation}\label{Th.analog.of.PW.proof.1.4}
\dfrac{\psi'_*(\zeta_*)}{\psi_*(\zeta_*)}=0.
\end{equation}
We also denote~by~$\alpha_j$ $(j\in\mathbb{Z})$ the~\textit{real}
zeroes of $\varphi$ (and of $\psi_*$) and
$I_j=(\alpha_j,\alpha_{j+1})$. If $\varphi$ has the~largest
zero~$\alpha_L$ and the smallest zero~$\alpha_S$, then we also
consider the intervals
\begin{equation*}
I_{+\infty}=(\alpha_L,+\infty)\quad\text{and}\quad
I_{-\infty}=(-\infty,\alpha_S).
\end{equation*}

We now show that $\psi_*$ possesses \textit{property~A}. If
$\varphi$ has no real zeroes, then $\psi_*$ also has no real
zeroes and, therefore, it possesses \textit{property~A}.

Suppose that $\varphi$ has at least one real zero. Consider the
interval $I_j$ for a fixed finite $j$. Let $\beta$ be the~leftmost
zero of $\psi'_*/\psi_*$ in $I_j$. The existence of this zero is
guaranteed by Rolle's theorem. Since
$\psi'_*(z)/\psi_*(z)\to+\infty$ whenever~$z\searrow\alpha_j$
(see~\eqref{ineq.1}), we have $\psi'_*(z)/\psi_*(z)>0$ for
$z\in(\alpha_j,\beta)$. Now from~\eqref{Th.analog.of.PW.proof.1.3}
it follows that $Q[\psi_*]$ has no zeroes in $(\alpha_j,\beta)$.
Consequently, $\psi_*$ possesses \textit{property~A}
at~$\alpha_j$. In particular, if $\psi_*$ has the smallest zero
$\alpha_S=\alpha_1$, then considering the interval $I_1$, we
obtain that $\psi_*$ possesses \textit{property~A} at $\alpha_S$.
Let $\psi_*$ have the largest zero $\alpha_L$. If $\psi'_*/\psi_*$
has no zeroes in the interval $I_{+\infty}$, then
$\psi'_*(z)/\psi_*(z)>0$ for $z\in I_{+\infty}$, since
$\psi'_*(z)/\psi_*(z)\to+\infty$ whenever~$z\searrow\alpha_L$
(see~\eqref{ineq.1}). Therefore,
by~\eqref{Th.analog.of.PW.proof.1.3}, we have $Q[\psi_*](z)\neq0$
in the interval $(\alpha_L,+\infty)$, and $\psi_*$ also possesses
\textit{property~A} at $\alpha_L$. If $\psi'_*/\psi_*$ has at
least one zero in $I_{+\infty}$ and $\beta$ is the leftmost one,
then $\psi'_*(z)/\psi_*(z)>0$ for $z\in(\alpha_L,\beta)$. So,
$Q[\psi_*]$ can not have zeroes in $z\in(\alpha_L,\beta)$ by the
same reasoning as above. Thus, we obtain that $\psi_*$ possesses
\textit{property~A} at each of its real zeroes (if any).
Consequently, $\psi_*$ possesses \textit{property~A}.

\vspace{3mm}

We now prove that $Z_\mathbb{C}(\psi'_*)<Z_\mathbb{C}(\psi_*)$.
There can be only the following two cases.

\vspace{1mm}

\noindent Case I. Let the function $\psi_*$ have no real zeroes.
Then all real zeroes of $\psi'_*$ are extra~zeroes.

If $\zeta_*$ is a zero of $Q[\psi_*]$ of multiplicity
$M\geqslant1$, then $\zeta_*$ is a zero of $\psi'_*$ of
multiplicity $M+1$ according to~\eqref{Th.analog.of.PW.proof.1.4}.
Thus, we have $E(\psi'_*)\geqslant M+1$. Since by
Theorems~\ref{Theorem.number.of.extra.zeroes.type.I}--\ref{Theorem.number.of.extra.zeroes.type.III}
(with $n=0$)
\begin{equation}\label{extraroots.general.2}
Z_\mathbb{C}(\psi_*)-Z_\mathbb{C}(\psi'_*)-1\leqslant
E(\psi'_*)\leqslant Z_\mathbb{C}(\psi_*)-Z_\mathbb{C}(\psi'_*)+1,
\end{equation}
we have $1\leqslant M\leqslant
Z_\mathbb{C}(\psi_*)-Z_\mathbb{C}(\psi'_*)$, that is,
$Z_\mathbb{C}(\psi'_*)<Z_\mathbb{C}(\psi_*)$.

\vspace{4mm}

\noindent Case II. Let now $\psi_*$ have at least one real zero.
Recall that all zeroes of $\psi'_*$ in the every interval $I_j$
are extra zeroes of $\psi'_*$ except one, counting multiplicity.
At the same time, all the zeroes of $\psi'_*$ in the intervals
$I_{-\infty}$ and $I_{+\infty}$ are extra zeroes of $\psi'_*$.

\vspace{2mm}

Let $\psi_*$ have at least two distinct real zeroes and let
$\zeta_*\in I_j$ for some fixed finite $j$. If $\zeta_*$ is a zero of
$Q[\psi_*]$ of \textit{even} multiplicity, then $\zeta_*$ is a
zero of $\psi'_*$ of odd multiplicity (at least three). Since one
zero of $\psi_*$ in~$I_j$ is guaranteed by Rolle's theorem,  we
have
\begin{equation}\label{Th.analog.of.PW.proof.1.6}
E(\psi'_*)\geqslant2.
\end{equation}

If $\psi'_*$ is not of the
form~\eqref{finite.zeroes.function.type.III} (with $n=0$), then by
Theorems~\ref{Theorem.number.of.extra.zeroes.type.I},~\ref{Theorem.number.of.extra.zeroes.type.II}
and~\ref{Theorem.number.of.extra.zeroes.type.IV},
$\psi_*$~satisfies the inequalities
\begin{equation}\label{extraroots.2}
Z_\mathbb{C}(\psi_*)-Z_\mathbb{C}(\psi'_*)\leqslant
E(\psi'_*)\leqslant Z_\mathbb{C}(\psi_*)-Z_\mathbb{C}(\psi'_*)+1.
\end{equation}
These inequalities together with~\eqref{Th.analog.of.PW.proof.1.6}
imply $Z_\mathbb{C}(\psi'_*)<Z_\mathbb{C}(\psi_*)$.

If $\psi'_*$ has the form~\eqref{finite.zeroes.function.type.III}
(with $n=0$), then we have
\begin{equation}\label{Th.analog.of.PW.proof.1.2}
\dfrac{\psi'_*(z)}{\psi_*(z)}=\dfrac{p'(z)}{p(z)}-2az+b,\quad a>0,
\end{equation}
where $p$ is a real polynomial and $b\in\mathbb{R}$. Hence the
interval $I_{-\infty}$ exists, and $\psi'_*$ has an odd number of
extra zeroes (at least one) in~$I_{-\infty}$. In~fact,
$\psi'_*(z)/\psi_*(z)\to+\infty$ whenever $z\to-\infty$ and
$\psi'_*(z)/\psi_*(z)\to-\infty$ whenever $z\nearrow\alpha_S$
by~\eqref{ineq.1} and~\eqref{Th.analog.of.PW.proof.1.2}. Thus, in
this case the inequality~\eqref{Th.analog.of.PW.proof.1.6} can be
improved to the following one:
\begin{equation}\label{Th.analog.of.PW.proof.1.7}
E(\psi'_*)\geqslant3.
\end{equation}
But by Theorem~\ref{Theorem.number.of.extra.zeroes.type.III},
$E(\psi'_*)=Z_\mathbb{C}(\psi_*)-Z_\mathbb{C}(\psi'_*)+2$. Now
from~\eqref{Th.analog.of.PW.proof.1.7} we obtain
$Z_\mathbb{C}(\psi'_*)<Z_\mathbb{C}(\psi_*)$.

\vspace{2mm}

If $\zeta_*$ is a zero of $Q[\psi_*]$ of \textit{odd}
multiplicity, then $\zeta_*$ is a zero of $\psi'_*$ of even
multiplicity (at least two). But by Rolle's theorem, $\psi_*$ has
an odd number of zeroes in $I_j$ if $j$ is finite,
so~\eqref{Th.analog.of.PW.proof.1.6} is valid if $\psi_*$ is not
of the form~\eqref{finite.zeroes.function.type.III} (with $n=0$).
If $\psi_*$ is of the form~\eqref{finite.zeroes.function.type.III}
(with $n=0$), then in this case~\eqref{Th.analog.of.PW.proof.1.7}
can be proved by the same method as above. Thus, we also have
$Z_\mathbb{C}(\psi'_*)<Z_\mathbb{C}(\psi_*)$
by~\eqref{extraroots.2} and~\eqref{extraroots.general.2}.

\vspace{2mm}

Let $\psi_*$ have at least one real zero and let $\zeta_*\in
I_{-\infty}$ or $\zeta_*\in I_{+\infty}$. By the same reasoning as
above, one can show that the
inequality~\eqref{Th.analog.of.PW.proof.1.7} holds in this case.
So, we again obtain the inequality
$Z_\mathbb{C}(\psi'_*)<Z_\mathbb{C}(\psi_*)$ by
Theorems~\ref{Theorem.number.of.extra.zeroes.type.I}--\ref{Theorem.number.of.extra.zeroes.type.IV}.

\vspace{2mm}

Thus, we have shown that for a given $\varphi\in\mathcal{L-P^*}$,
there exists a real $\sigma_*$ such that the function
$\psi_*(z)=e^{-\sigma_*z}\varphi(z)$ possesses
\textit{property~A}. Additionally, if $Z_\mathbb{R}(Q)>0$, then
$Z_\mathbb{C}(\psi'_*)<Z_\mathbb{C}(\psi_*)$, as required.
\end{proof}
\begin{note}\label{remark.2.12}
For a given function $\varphi\in\mathcal{L-P^*}$ the number
$\sigma_*$ guaranteed by Theorem~\ref{Th.analog.of.PW} is not
unique. For example, one can find another number applying
Theorem~\ref{Th.analog.of.PW} to the function $\varphi(-z)$.
\end{note}
\begin{note}\label{remark.new}
We recall that, for $\varphi$ and $\psi_*$ of
Theorem~\ref{Th.analog.of.PW}, we have $Q[\psi_*]=Q[\varphi]$
(see~\eqref{Th.analog.of.PW.proof.1}).
\end{note}
\begin{note}\label{remark.Th.analog.of.PW.extension}
In the same way as in Theorem~\ref{Th.analog.of.PW}, one can prove
that if, for a function $\varphi\in U_{2n}^*$ with $n\geqslant1$,
its associated function $Q[\varphi]$ has only finitely many real
zeroes, then there exists a real number $\sigma_*$ such that
$\psi_*(z)=e^{-\sigma_* z}\varphi(z)$ possesses \textit{property
A}. This number is not unique. Generally speaking,
Theorem~\ref{Th.analog.of.PW} is not true for the classes
$U_{2n}^*$ with $n\geqslant1$.
\end{note}

We use the functions with \textit{property~A} and
Theorem~\ref{Th.analog.of.PW} in
Sections~\ref{subsection:finite.intervals.2}
and~\ref{section.Hawaii}

\setcounter{equation}{0}

\section{Bounds on the number of real critical points of the logarithmic derivative on finite
intervals}\label{section:finite.intervals}

Let $\varphi$ be in $U_{2n}^*$ and let the function $Q$ be defined
as in~\eqref{main.function.2}. In this section we establish bounds
on the number of zeroes of the function $Q[\varphi]$ on finite
intervals.

Section~\ref{subsection:finite.intervals.1} is devoted to
estimates of the number of zeroes of $Q$ on intervals free of
zeroes of the functions $\varphi$, $\varphi'$ and $\varphi''$. In
this section, we also prove a basic fact, Theorem~\ref{lemma.5},
establishing an interrelation between the numbers $Z_{(a,b)}(Q)$
and $Z_{(a,b)}(Q_1)$, where $(a,b)$ is an interval such that
$\varphi(z)\neq0$, $\varphi'(z)\neq0$ and $\varphi''(z)\neq0$ for
$z\in(a,b)$.

In Section~\ref{subsection:finite.intervals.2}, we establish
bounds on the number of zeroes of the function $Q$ on intervals
free of zeroes of $\varphi'$ and $\varphi$, on intervals free of
zeroes of $\varphi$ and on intervals with a unique zero of
$\varphi$.

\subsection{Intervals free of zeroes of $\varphi$, $\varphi'$ and $\varphi''$}\label{subsection:finite.intervals.1}

For a given function~$\varphi\in U_{2n}^*$, by $F$ and $F_1$ we
will denote the following functions
\begin{equation}\label{F.functions}
F(z)=\varphi(z)\varphi''(z)-\left(\varphi'(z)\right)^2,\quad
F_1(z)=\varphi'(z)\varphi'''(z)-\left(\varphi''(z)\right)^2.
\end{equation}

Our first result is about the number of zeroes of $Q$ on a finite
interval free of zeroes of the functions $\varphi$, $\varphi'$,
$\varphi''$ and $Q_1$.

\begin{lemma}\label{lemma.3}
Let $\varphi\in U_{2n}^*$ and let $a$ and $b$ be real and let
$\varphi(z)\neq0$, $\varphi'(z)\neq0$, $\varphi''(z)\neq0$,
$Q_{1}(z)\neq0$ in the~interval~$(a,b)$. Suppose additionally that
if $\varphi(b)\neq0$ then $\varphi'(b)\neq0$ as well.
\begin{itemize}
\item [\emph{I.}] If, for all sufficiently small $\delta>0$,
\begin{equation}\label{lemma.3.condition.1}
\varphi'(a+\delta)\varphi''(a+\delta)Q(a+\delta)Q_{1}(a+\delta)>0,
\end{equation}
then $Q$ has no zeroes in $(a,b]$.
\item [\emph{II.}] If, for all sufficiently small $\delta>0$,
\begin{equation}\label{lemma.3.condition.2}
\varphi'(a+\delta)\varphi''(a+\delta)Q(a+\delta)Q_{1}(a+\delta)<0,
\end{equation}
then $Q$ has at most one zero in $(a,b)$, counting multiplicities.
Moreover, if~$Q(\zeta)=0$ for some $\zeta\in(a,b)$, then
$Q(b)\neq0$ \emph{(}if $Q$ is finite at $b$\emph{)}.
\end{itemize}
\end{lemma}
\begin{proof} The condition $\varphi(z)\neq0$ for $z\in(a,b)$ means
that $Q$ is finite at every point of~$(a,b)$.

If $\zeta\in(a,b)$ and $Q(\zeta)=0$, then $F(\zeta)=0$
and~\eqref{F.functions} implies
\begin{equation}\label{main.equality.1}
\varphi'(\zeta)=\dfrac{\varphi(\zeta)\varphi''(\zeta)}{\varphi'(\zeta)}.
\end{equation}
Now we consider $F_{1}$. From~\eqref{F.functions}
and~\eqref{main.equality.1} it is easy to derive that
\begin{equation}\label{main.work.formula}
\begin{array}{c}
F_{1}(\zeta)=\varphi'(\zeta)\varphi'''(\zeta)-\left(\varphi''(\zeta)\right)^2=
\dfrac{\varphi(\zeta)\varphi''(\zeta)\varphi'''(\zeta)}{\varphi'(\zeta)}-\left(\varphi''(\zeta)\right)^2=\\
 \\
=\dfrac{\varphi''(\zeta)}{\varphi'(\zeta)}\left[\varphi(\zeta)\varphi'''(\zeta)-
\varphi'(\zeta)\varphi''(\zeta)\right]=\dfrac{\varphi''(\zeta)}{\varphi'(\zeta)}F'(\zeta).
\end{array}
\end{equation}
Since $\varphi'(z)\neq0,\varphi''(z)\neq0,Q_{1}(z)\neq0$ (and
therefore $F_{1}(z)\neq0$) in $(a,b)$ by assumption,
from~\eqref{main.work.formula} it follows that $\zeta$ is a~simple
zero of $Q$. That is, all zeroes of $Q$ in~$(a,b)$ are simple.

\vspace{2mm}

\noindent I. Let the inequality~\eqref{lemma.3.condition.1} hold.
Assume that, for all sufficiently small~$\delta>0$,
\begin{equation}\label{lemma.3.proof.1}
\varphi'(a+\delta)\varphi''(a+\delta)Q_{1}(a+\delta)>0,
\end{equation}
then $Q(a+\delta)>0$, that is, $F(a+\delta)>0$. Therefore, if
$\zeta$ is the leftmost zero of $Q$ in $(a,b)$, then
$F'(\zeta)<0$. This inequality
contradicts~\eqref{main.work.formula}, since
\begin{equation*}
\varphi'(z)\varphi''(z)Q_{1}(z)>0
\end{equation*}
for $z\in(a,b)$, which follows from~\eqref{lemma.3.proof.1} and
from the assumption of the lemma. Consequently, $Q$~cannot have
zeroes in the interval~$(a,b)$ if the
inequalities~\eqref{lemma.3.condition.1}
and~\eqref{lemma.3.proof.1} hold. In the same way, one can prove
that if $\varphi'(a+\delta)\varphi''(a+\delta)Q_{1}(a+\delta)<0$
for all sufficiently small~$\delta>0$ and if the
inequality~\eqref{lemma.3.condition.1} hold, then $Q(z)\neq0$
for~$z\in(a,b)$.

Thus, $Q$ has no zeroes in the interval~$(a,b)$ if the
inequality~\eqref{lemma.3.condition.1} holds. Moreover, it is easy
to show that $Q(b)\neq0$ as well. To do this, we first note that
from~\eqref{F.functions} it follows that
\begin{equation*}
\varphi'(z)=\dfrac{\varphi(z)\varphi''(z)}{\varphi'(z)}-\dfrac{F(z)}{\varphi'(z)},
\end{equation*}
since $\varphi'(z)\neq0$ for $z\in(a,b)$ by assumption. Substituting this expression into the formula~\eqref{F.functions},
we obtain
\begin{equation}\label{new}
\begin{array}{c}
F_{1}(z)=\dfrac{\varphi(z)\varphi''(z)\varphi'''(z)}{\varphi'(z)}-
\dfrac{\varphi'(z)\varphi''(z)\varphi''(z)}{\varphi'(z)}-\dfrac{F(z)\varphi'''(z)}{\varphi'(z)}=\\
=\dfrac{\varphi''(z)}{\varphi'(z)}\cdot
F'(z)-\dfrac{F(z)\varphi'''(z)}{\varphi'(z)}.
\end{array}
\end{equation}

Suppose now that $Q(b)=0$. Then $\varphi(b)\neq0$, therefore,
$\varphi'(b)\neq0$ by assumption. So we obtain $F(b)=0$ and, from~\eqref{F.functions},
$\varphi''(b)\neq0$. Thus, we have $(\varphi\varphi'\varphi'')(b)\neq0$. From~\eqref{new} it follows
that the functions $F'$ and $F_1$ have have a zero of the same order at $b$. In particular, $F'(b)\neq0$
if and only if $F_1(b)\neq0$. Furthermore, it is clear that the order of the zero of $F'$ (and $F_1$)
at $b$ is strictly smaller than the order of the zero of $\varphi'''F$ at $b$. Consequently, from~\eqref{new}
we obtain, for all sufficiently small~$\varepsilon>0$,
\begin{equation}\label{additional.work.formula.1}
\sgn\left(\dfrac{\varphi'(b-\varepsilon)}
{\varphi''(b-\varepsilon)}F_{1}(b-\varepsilon)\right)=\sgn(F'(b-\varepsilon)).
\end{equation}
But if the
inequality~\eqref{lemma.3.condition.1} holds, then
\begin{equation}\label{lemma.3.proof.2}
\sgn\left(\dfrac{\varphi'(b-\varepsilon)}
{\varphi''(b-\varepsilon)}F_{1}(b-\varepsilon)\right)=\sgn(F(b-\varepsilon))
\end{equation}
for all sufficiently small $\varepsilon>0$, since
$\varphi'(z)\neq0$, $\varphi''(z)\neq0$, $Q_{1}(z)\neq0$ in the
interval~$(a,b)$ by assumption and since $Q(z)\neq0$ in~$(a,b)$,
which was proved above. So, if the
inequality~\eqref{lemma.3.condition.1} holds and if $F(b)=0$, then
from~\eqref{additional.work.formula.1} and~\eqref{lemma.3.proof.2}
we obtain that
\begin{equation*}
F(b-\varepsilon)F'(b-\varepsilon)>0
\end{equation*}
for all sufficiently small $\varepsilon>0$. This inequality
contradicts the analyticity\footnote{If a real function $f$ is
analytic at some neighbourhood of a real point $a$ and equals zero
at this point, then, for all sufficiently small $\varepsilon>0$,
\begin{equation*}
f(a-\varepsilon)f'(a-\varepsilon)<0.
\end{equation*}
} of the function~$F$. Since $Q(b)$ exists, $Q_1(b)$ is finite by
assumption  of the lemma, so everything established above is true
for the functions~$Q$ and $Q_1$. Therefore, if the
inequality~\eqref{lemma.3.condition.1} holds and if $Q$ is finite
at~the~point~$b$, then $Q(b)\neq0$. Thus, the first part of the
lemma is proved.

\vspace{2mm}

\noindent II. Let the inequality~\eqref{lemma.3.condition.2}
hold, then $Q$ can have zeroes in $(a,b)$. But it cannot have
more than one zero, counting multiplicity. In fact, if $\zeta$ is
the leftmost zero of $Q$ in $(a,b)$, then this zero is simple as
we proved above. Therefore, the following inequality holds for all
sufficiently small $\varepsilon>0$
\begin{equation*}
\varphi'(\zeta+\varepsilon)\varphi''(\zeta+\varepsilon)Q(\zeta+\varepsilon)Q_{1}(\zeta+\varepsilon)>0.
\end{equation*}
Consequently, $Q$ has no zeroes in $(\zeta,b]$ according to Case~I
of the lemma.
\end{proof}
\begin{note}\label{remark.half.inf.1}
Lemma~\ref{lemma.3} is also true if $(a,b)$ is a half-infinite
interval, i.e., $(a,+\infty)$ or $(-\infty,b)$.
\end{note}

Thus, we have found out that $Q$ has at most one real zero,
counting multiplicity, in an interval where the functions
$\varphi$, $\varphi'$, $\varphi''$ and $Q_1$ have no real zeroes.
Now we study multiple zeroes of $Q$ and its zeroes common with one
of the above-mentioned functions. From~\eqref{main.function.2} it
follows that all zeroes of $\varphi'$ of multiplicity at least $2$
that are not zeroes of $\varphi$ are also zeroes of $Q$ and all
zeroes of $\varphi'$ of multiplicity at least $3$ that are not
zeroes of $\varphi$ are multiple zeroes of $Q$. The following
lemma provides information about common zeroes of $Q$ and $Q_1$.

\begin{lemma}\label{lemma.4}
Let $\varphi\in U_{2n}^*$ and let $a$ and $b$ be real and let
$\varphi(z)\neq0$, $\varphi'(z)\neq0$, $\varphi''(z)\neq0$ in the
interval~$(a,b)$. Suppose that $Q_1$ has a unique zero
$\xi\in(a,b)$ of multiplicity  $M$ in~$(a,b)$, and suppose
additionally that $\varphi'(b)\neq0$ if $\varphi(b)\neq0$.

If~$Q(\xi)=0$, then $\xi$ is a zero of $Q$ of multiplicity $M+1$,
and $Q(z)\neq0$ for $z\in(a,\xi)\cup(\xi,b]$.
\end{lemma}
\begin{proof} The condition $\varphi(z)\neq0$ for $z\in(a,b)$ means
that $Q$ is finite at every point of~$(a,b)$.

By assumption, $\xi$ is a zero of $F_{1}$ of multiplicity $M$ and
$F(\xi)=0$. First, we prove that $\xi$ is a~zero of $F$
of~multiplicity~$M+1$.

Note that the expression~\eqref{new} can be rewritten in the form
\begin{equation*}
\dfrac{\varphi'(z)}{\left[\varphi''(z)\right]^2}F_{1}(z)=\left(\dfrac{F(z)}{\varphi''(z)}\right)',
\end{equation*}
since $\varphi''(z)\neq0$ for $z\in(a,b)$ by assumption.
Differentiating this equality $j$ times with respect~to~$z$, we
get
\begin{equation}\label{derivatives}
\left(\dfrac{\varphi'(z)}{\left[\varphi''(z)\right]^2}~F_{1}(z)\right)^{(j)}=\left(\dfrac{F(z)}{\varphi''(z)}\right)^{(j+1)}.
\end{equation}
From~\eqref{derivatives} it follows that  $F^{(j+1)}(\xi)=0$ if
$\varphi'(\xi)\neq0$, $\varphi''(\xi)\neq0$, $F^{(i)}_{1}(\xi)=0$
and $F^{(i)}(\xi)=0$, $i=0,1,\ldots,j$. Consequently, $\xi$ is a
zero of $F$ of multiplicity at least $M+1$. But by assumptions,
\eqref{derivatives} implies the following formula
\begin{equation*}
0\neq\varphi'(\xi)F^{(M)}_{1}(\xi)=\varphi''(\xi)F^{(M+1)}(\xi).
\end{equation*}
Hence, $\xi$ is a zero of $F$ of multiplicity exactly $M+1$. But
$\varphi(\xi)\neq0$ by assumption, therefore,~$\xi$~is a~zero of
$Q$ of multiplicity $M+1$.

It remains to prove that $Q$ has no zeroes in $(a,b]$ except
$\xi$. In fact, consider the interval~$(a,\xi)$. According to
Lemma~\ref{lemma.3}, $Q$ can have a zero at $\xi$ only if the
inequality~\eqref{lemma.3.condition.2} holds and $Q(z)\neq0$ for
$z\in(a,\xi)$. Furthermore, the function $\varphi'\varphi''$ does
not change its sign at $\xi$ but the function~$QQ_{1}$ does, since
$\xi$ is a zero of~$QQ_{1}$ of multiplicity~$2M+1$. Thus, for all
sufficiently small $\delta>0$,
\begin{equation}\label{additional.work.formula.2}
\varphi'(\xi+\delta)\varphi''(\xi+\delta)Q(\xi+\delta)Q_{1}(\xi+\delta)>0,
\end{equation}
since the inequality~\eqref{lemma.3.condition.2} must hold in the
interval $(a,\xi)$ by Lemma~\ref{lemma.3}.
From~\eqref{additional.work.formula.2} it follows that Case~I of
Lemma~\ref{lemma.3} holds in the interval~$(\xi,b)$, so
$Q(z)\neq0$ for $z\in(\xi,b]$.
\end{proof}
\begin{note}\label{remark.2.4.new}
Lemma~\ref{lemma.4} remains valid if $(a,b)$ is a half-infinite
interval, that is, $(a,+\infty)$ or $(-\infty,b)$.
\end{note}

Now combining the two last lemmata, we provide a general bound on
the number of real zeroes of $Q$ in terms of the number of real
zeroes of $Q_1$ in a given interval.
\begin{lemma}\label{lemma.5}
Let $\varphi\in U_{2n}^*$ and let $a$ and $b$ be real. If
$\varphi(z)\neq0$, $\varphi'(z)\neq0$ and $\varphi''(z)\neq0$ for
$z\in(a,b)$, then
\begin{equation}\label{main.work.formula.1}
Z_{(a,b)}(Q)\leqslant1+Z_{(a,b)}(Q_{1}).
\end{equation}
\end{lemma}
\begin{proof}
If $\varphi(z)\varphi''(z)<0$ in $(a,b)$, then $Q(z)<0$
for~$z\in(a,b)$ by~\eqref{main.function.2}, that
is,~$Z_{(a,b)}(Q)=0$. Therefore, the
inequality~\eqref{main.work.formula.1} holds automatically in this
case.

Let now  $\varphi(z)\varphi''(z)>0$ for $z\in(a,b)$.
If~$Q_{1}(z)\neq0$ in~$(a,b)$, that is, $Z_{(a,b)}(Q_{1})=0$, then
by Lemma~\ref{lemma.3}, $Q$ has at most one real zero, counting
multiplicity, in~$(a,b)$. Therefore,~\eqref{main.work.formula.1}
also holds in this case.

If $Q_1$ has a unique zero $\xi$ in~$(a,b)$ and $Q(\xi)\neq0$,
then by Lemma~\ref{lemma.3}, $Q$ has at most one real zero,
counting multiplicity, in each interval $(a,\xi)$ and $(\xi,b)$:
\begin{equation}\label{lemma.5.proof.1}
Z_{(a,\xi)}(Q)\leqslant1+Z_{(a,\xi)}(Q_{1}),
\end{equation}
where $Z_{(a,\xi)}(Q_{1})=0$, and
\begin{equation}\label{lemma.5.proof.2}
Z_{(\xi,b)}(Q)\leqslant1+Z_{(\xi,b)}(Q_{1}),
\end{equation}
where $Z_{(\xi,b)}(Q_{1})=0$. Since $Q(\xi)\neq0$ and
$Q_1(\xi)=0$, we have
\begin{equation}\label{lemma.5.proof.3}
0=Z_{\{\xi\}}(Q)\leqslant-1+Z_{\{\xi\}}(Q_{1}).
\end{equation}
Thus, summing the
inequalities~\eqref{lemma.5.proof.1}--\eqref{lemma.5.proof.3}, we
obtain~\eqref{main.work.formula.1}.

\vspace{2mm}

If $Q_1$ has a unique zero $\xi$ in~$(a,b)$ and $Q(\xi)=0$, then,
by Lemma~\ref{lemma.4}, we have
\begin{equation*}\label{lemma.5.proof.4}
Z_{\{\xi\}}(Q)=1+Z_{\{\xi\}}(Q_{1}),
\end{equation*}
and $Q(z)\neq0$ for $z\in(a,\xi)\cup(\xi,b)$. Therefore, the
inequality~\eqref{main.work.formula.1} is also true in this case.

\vspace{3mm}

Now, let $Q_{1}$ have exactly $r\geqslant2$ \textit{distinct} real
zeroes, say $\xi_1<\xi_2<\ldots<\xi_r$, in the interval~$(a,b)$.
These zeroes divide $(a,b)$ into $r+1$ subintervals. If, for some
number $i$, $1\leqslant i\leqslant r$, $Q(\xi_i)\neq0$,
then by Lemma~\ref{lemma.3}, $Q$ has \textit{at~most} one real
zero, counting multiplicity, in~$(\xi_{i-1},\xi_i]$
($\xi_0\stackrel{def}=a$). But $Q_{1}$ has \textit{at~least} one
real zero in $(\xi_{i-1},\xi_i]$, counting multiplicities (at the
point~$\xi_i$). Consequently,
\begin{equation}\label{intermediate.1.lemma.5}
Z_{(\xi_{i-1},\xi_i]}(Q)\leqslant Z_{(\xi_{i-1},\xi_i]}(Q_{1})
\end{equation}
If, for some number $i$, $1\leqslant i\leqslant r-1$, $Q(\xi_i)=0$
and $\xi_i$ is a zero of $Q_{1}$ of multiplicity~$M$, then by
Lemma~\ref{lemma.4}, $Q$ has only one zero $\xi_{i}$ of
multiplicity~$M+1$ in~$(\xi_{i-1},\xi_{i+1}]$. But in
the~interval~$(\xi_{i-1},\xi_{i+1}]$, $Q_{1}$ has \textit{at
least} $M+1$ real zeroes, counting multiplicities (namely, $\xi_i$
which is a zero of multiplicity~$M$, and $\xi_{i+1}$). Therefore,
in this case, the following inequality holds
\begin{equation}\label{intermediate.2.lemma.5}
Z_{(\xi_{i-1},\xi_{i+1}]}(Q)\leqslant
Z_{(\xi_{i-1},\xi_{i+1}]}(Q_{1})
\end{equation}
Thus, if $Q(\xi_r)\neq0$, then
from~\eqref{intermediate.1.lemma.5}--\eqref{intermediate.2.lemma.5}
it follows that
\begin{equation}\label{intermediate.3.lemma.5}
Z_{(a,\xi_r]}(Q)\leqslant Z_{(a,\xi_r]}(Q_{1}).
\end{equation}
But by Lemma~\ref{lemma.3}, $Q$ has \textit{at~most} one real
zero, counting multiplicity, in the interval $(\xi_r,b)$.
Consequently, if $Q(\xi_r)\neq0$, then the
inequality~\eqref{main.work.formula.1} is valid.

If $Q(\xi_r)=0$, then by Lemma~\ref{lemma.4}, $Q(\xi_{r-1})\neq0$
(otherwise, $\xi_r$ cannot be a zero of $Q$) and
from~\eqref{intermediate.1.lemma.5}--\eqref{intermediate.2.lemma.5}
it follows that
\begin{equation}\label{intermediate.4.lemma.5}
Z_{(a,\xi_{r-1}]}(Q)\leqslant Z_{(a,\xi_{r-1}]}(Q_{1}).
\end{equation}
Now Lemma~\ref{lemma.4} implies
\begin{equation}\label{intermediate.5.lemma.5}
Z_{(\xi_{r-1},b)}(Q)=1+Z_{(\xi_{r-1},b)}(Q_{1}),
\end{equation}
therefore, the inequality~\eqref{main.work.formula.1} follows
from~\eqref{intermediate.4.lemma.5}--\eqref{intermediate.5.lemma.5}.
\end{proof}
\begin{note}\label{remark.2.4}
Lemma~\ref{lemma.5} is also true if $(a,b)$ is a half-infinite
interval, that is, $(a,+\infty)$ or $(-\infty,b)$ (see
Remarks~\ref{remark.half.inf.1} and~\ref{remark.2.4.new}).
\end{note}

The inequality~\eqref{main.work.formula.1} plays the main role in
the sequel. \textit{Per se}, all subsequent inequalities are
consequences of this inequality and Rolle's theorem.

\subsection{Intervals between zeroes of $\varphi$, $\varphi'$ and $\varphi''$}\label{subsection:finite.intervals.2}

At first, we estimate the parities of the numbers of real zeroes
of $Q$ in certain intervals.

\begin{lemma}\label{lemma.1}
Let $\varphi\in U_{2n}^*$ and let $\beta_1$ and $\beta_2$ be two
real zeroes of $\varphi'$.
\begin{itemize}
\item [\emph{I.}] If $\beta_1$ and
$\beta_2$ are consecutive real zeroes of $\varphi'$, and
$\varphi(z)\neq0$ for $z\in[\beta_1,\beta_2]$, then $Q$ has an odd
number of real zeroes in $(\beta_1,\beta_2)$, counting
multiplicities.
\item[\emph{II.}] If $\beta_1$ and
$\beta_2$ are two real zeroes of $\varphi'$ such that $\varphi$
has a unique real zero $\alpha$ in $(\beta_1,\beta_2)$ and
$\varphi'(z)\neq0$ for $z\in(\beta_1,\alpha)\cup(\alpha,\beta_2)$,
then $Q$ has an even number of real zeroes, counting
multiplicities, in each of the intervals $(\beta_1,\alpha)$
and~$(\alpha,\beta_2)$.
\end{itemize}
\end{lemma}
\begin{proof}
\noindent I. In fact, since $\varphi'/\varphi$ equals zero at the
points $\beta_1$ and $\beta_2$, its derivative, the function $Q$,
has an odd number of zeroes in $(\beta_1,\beta_2)$ by Rolle's
theorem.

\vspace{2mm}

\noindent II. According to Proposition~\ref{prop.Q.at.alpha}
(see~\eqref{ineq.1}), $\varphi'/\varphi$ is decreasing in a small
left-sided vicinity of $\alpha$, and therefore,
$\varphi'(z)/\varphi(z)\to-\infty$ whenever $z\nearrow\alpha$.
Since $\varphi'/\varphi$ equals zero at $\beta_1$, its derivative,
the function $Q$, must have an even number of zeroes in
$(\beta_1,\alpha)$ by Rolle's theorem.

By the same argumentation, $Q$ has an even number of zeroes in
$(\alpha,\beta_2)$.
\end{proof}

We now embark on a more detailed analysis of the zeroes of
$\varphi$, $\varphi'$, and $\varphi''$. In the following lemma, we
consider an arbitrary pair of zeroes of $\varphi''$. According to
Notation~\ref{notation.zeroes.of.functions}, we denote them
by~$\gamma^{(1)}$,~$\gamma^{(2)}$.

\begin{lemma}\label{lemma.6.5}
Let $\varphi\in U_{2n}^*$ and let $\gamma^{(1)}$ and
$\gamma^{(2)}$, $\gamma^{(1)}<\gamma^{(2)}$, be real zeroes of
$\varphi''$ such that $\varphi(z)\neq0$ and $\varphi'(z)\neq0$ for
$z\in[\gamma^{(1)},\gamma^{(2)}]$ and suppose that $\varphi''$ has
exactly $q\geqslant2$ zeroes, counting multiplicities, in the
interval~$[\gamma^{(1)},\gamma^{(2)}]$. Then $Q$ has an even
number of zeroes in $[\gamma^{(1)},\gamma^{(2)}]$ and one of the
following holds:
\begin{itemize}
\item[\emph{I.}] If $q$ is an odd number, then
\begin{equation}\label{intermediate.11.lemma.6}
0\leqslant Z_{[\gamma^{(1)},\gamma^{(2)}]}(Q)\leqslant
Z_{[\gamma^{(1)},\gamma^{(2)}]}(Q_{1}).
\end{equation}
\item[\emph{II.}] If $q$ is an even number and
$\varphi(\gamma^{(1)}-\varepsilon)\varphi''(\gamma^{(1)}-\varepsilon)>0$
for all sufficiently small~$\varepsilon>0$, then
\begin{equation}\label{corol.4.intermediate.10}
0\leqslant Z_{[\gamma^{(1)},\gamma^{(2)}]}(Q)\leqslant-1+
Z_{[\gamma^{(1)},\gamma^{(2)}]}(Q_{1}).
\end{equation}
\item[\emph{III.}] If $q$ is an even number and
$\varphi(\gamma^{(1)}-\varepsilon)\varphi''(\gamma^{(1)}-\varepsilon)<0$
for all sufficiently small~$\varepsilon>0$, then
\begin{equation}\label{lemma6.5.main.1}
0\leqslant Z_{[\gamma^{(1)},\gamma^{(2)}]}(Q)\leqslant1+
Z_{[\gamma^{(1)},\gamma^{(2)}]}(Q_{1}).
\end{equation}
\end{itemize}
\end{lemma}
\begin{proof} We assume that $\varphi''$
has exactly~$r\leqslant q$ \textit{distinct} zeroes in the
interval~$[\gamma^{(1)},\gamma^{(2)}]$, say
$\gamma^{(1)}=\gamma_1<\gamma_2<\ldots<\gamma_{r-1}<\gamma_r=\gamma^{(2)}$.
Below in the proof, we denote the
interval~$[\gamma^{(1)},\gamma^{(2)}]$ by~$[\gamma_1,\gamma_r]$.

From~\eqref{main.function.2} it follows that
\begin{equation}\label{intermediate.10.lemma.6}
Q(\gamma_i)=-\dfrac{\left(\varphi'(\gamma_i)\right)^2}{\left(\varphi(\gamma_i)\right)^2}<0,
\end{equation}
since $\varphi(z)\neq0$ and $\varphi'(z)\neq0$
in~$[\gamma_1,\gamma_r]$ by assumption. Thus, $Q$ has an even
number of zeroes in the interval~$[\gamma_1,\gamma_r]$.

\vspace{2mm}

\noindent I. Let $q$ be an odd number.

\vspace{2mm}

If $\varphi(z)\varphi''(z)\leqslant0$ for
$z\in[\gamma_1,\gamma_r]$, then by~\eqref{main.function.2}, we
have the inequalities
\begin{equation*}
0=Z_{[\gamma_{1},\gamma_{r}]}(Q)\leqslant
Z_{[\gamma_{1},\gamma_{r}]}(Q_{1}),
\end{equation*}
which are exactly~\eqref{intermediate.11.lemma.6}.

\vspace{2mm}

Let now the function $\varphi(z)\varphi''(z)$ be positive on one
of the intervals $(\gamma_{i-1},\gamma_{i})$. Suppose that, for
some $i$, $2\leqslant i\leqslant r$, $\varphi(z)\varphi''(z)>0$ in
the interval~$(\gamma_{i-1},\gamma_{i})$, then
by~Lemma~\ref{lemma.5}, we have
\begin{equation}\label{intermediate.1.lemma.6}
Z_{(\gamma_{i-1},\gamma_i)}(Q)\leqslant1+Z_{(\gamma_{i-1},\gamma_i)}(Q_{1}).
\end{equation}
Moreover, there can be only two possibilities:
\begin{itemize}
\item[I.1.] If the number $\gamma_i$ is a zero of $\varphi''$ of
\textit{even} multiplicity (at least two), then according
to~\eqref{all.main.functions}, $\gamma_i$ is a zero of $Q_{1}$ of
multiplicity \textit{at least} one, but $Q(\gamma_i)\neq0$
by~\eqref{intermediate.10.lemma.6}. From these facts and from the
inequality~\eqref{intermediate.1.lemma.6} we obtain
\begin{equation}\label{intermediate.2.lemma.6}
Z_{(\gamma_{i-1},\gamma_i]}(Q)\leqslant
Z_{(\gamma_{i-1},\gamma_i]}(Q_{1}).
\end{equation}
For the sequel, we notice that, in this case, $\varphi''$ has an
\textit{even} number of zeroes, counting multiplicities, in
the~interval~$(\gamma_{i-1},\gamma_i]$.
\item[I.2.] If the number $\gamma_i$, $i<r$, is a zero of $\varphi''$ of
\textit{odd} multiplicity (at least one), then
$\varphi(z)\varphi''(z)$ is nonpositive
in~$[\gamma_{i},\gamma_{i+1}]$, so $Q(z)\neq0$ in this interval
by~\eqref{main.function.2}. But $Q_{1}$ has an odd number (at
least one) of zeroes on the interval~$(\gamma_i,\gamma_{i+1})$
according to Lemma~\ref{lemma.1} (Case~I) applied to $Q_1$,
therefore,
\begin{equation*}
0=Z_{[\gamma_{i},\gamma_{i+1}]}(Q)\leqslant
Z_{[\gamma_{i},\gamma_{i+1}]}(Q_{1})-1.
\end{equation*}
This inequality and the inequality~\eqref{intermediate.1.lemma.6}
imply
\begin{equation}\label{intermediate.3.lemma.6}
Z_{(\gamma_{i-1},\gamma_{i+1}]}(Q)\leqslant
Z_{(\gamma_{i-1},\gamma_{i+1}]}(Q_{1}).
\end{equation}
Suppose that, for some number $j$, $i+1\leqslant j\leqslant r$,
the zeroes~$\gamma_{i+1},\ldots,\gamma_{j-1}$ of $\varphi''$ are
all of even multiplicities, and $\gamma_j$ is a zero of
$\varphi''$ of an odd multiplicity. 
Then we have $\varphi(z)\varphi''(z)\leqslant0$ and, therefore,
$Q(z)<0$ for $z\in(\gamma_{i+1},\gamma_j]$ according
to~\eqref{main.function.2}. But by Lemma~\ref{lemma.1} applied to
$Q_1$, the function $Q_1$ has \textit{at least} one zero on each
interval
$(\gamma_{i+1},\gamma_{i+2}),\ldots,(\gamma_{j-1},\gamma_j)$.
This fact and the~inequality~\eqref{intermediate.3.lemma.6} imply
\begin{equation}\label{intermediate.4.lemma.6}
Z_{(\gamma_{i-1},\gamma_{j}]}(Q)\leqslant
Z_{(\gamma_{i-1},\gamma_{j}]}(Q_{1}).
\end{equation}
As well as in Case I.1, we notice that~$\varphi''$ has an
\textit{even} number of zeroes, counting multiplicities, in the
interval~$(\gamma_{i-1},\gamma_j]$. It follows from the fact that
$\gamma_i$ and $\gamma_j$ are zeroes of~$\varphi''$ of odd
multiplicities and~$\gamma_{i+1},\ldots,\gamma_{j-1}$ are zeroes
of~$\varphi''$ of even multiplicities.
\end{itemize}

Let now $l$, $1\leqslant l\leqslant r-1$, be an integer such that
$\varphi(z)\varphi''(z)$ is nonpositive in the interval
$[\gamma_1,\gamma_l]$ and is positive in
the~interval~$(\gamma_l,\gamma_{l+1})$, thus $\gamma_l$ is a zero
of $\varphi''$ of an odd multiplicity if $l\geqslant2$. Then
by~\eqref{main.function.2}, $Q(z)\neq0$ for
$z\in[\gamma_1,\gamma_l]$, so we have the following inequality
\begin{equation}\label{intermediate.5.lemma.6}
0=Z_{[\gamma_{1},\gamma_{l}]}(Q)\leqslant
Z_{[\gamma_{1},\gamma_{l}]}(Q_{1}),
\end{equation}
since~$Q_1$ has at least~$l-1\,(\geqslant0)$ zeroes
in~$[\gamma_{1},\gamma_{l}]$ by Lemma~\ref{lemma.1} applied to
$Q_1$ on the intervals
$(\gamma_1,\gamma_2),\ldots,(\gamma_{l-1},\gamma_{l})$.

\vspace{2mm}

Now we consider various possibilities of behaviour of the function
$\varphi$ on the interval~$(\gamma_{r-1},\gamma_r)$ in order to
establish the inequality~\eqref{intermediate.11.lemma.6}. There
can be only three possibilities:
\begin{itemize}
\item[a)] Let either $\gamma_r$ be a zero of $\varphi''$ of even multiplicity or
$\varphi(z)\varphi''(z)<0$ in~$(\gamma_{r-1},\gamma_r)$. Then the
interval~$(\gamma_{l},\gamma_r]$ consists only of the subintervals
described in Cases I.1 and I.2. Consequently, from the
inequalities~\eqref{intermediate.2.lemma.6},~\eqref{intermediate.4.lemma.6},~\eqref{intermediate.5.lemma.6}
we obtain
\begin{equation}\label{intermediate.6.lemma.6}
Z_{[\gamma_{1},\gamma_{r}]}(Q)\leqslant
Z_{[\gamma_{1},\gamma_{r}]}(Q_{1}).
\end{equation}
\item[b)] Let $\gamma_r$ be a zero of $\varphi''$ of odd multiplicity and
let $\varphi(z)\varphi''(z)>0$ in~$(\gamma_{r-1},\gamma_r)$ and
let $l\geqslant2$. Then the interval~$(\gamma_{l},\gamma_{r-1}]$
consists only of subintervals described in Cases I.1 and I.2 and,
therefore,
\begin{equation}\label{intermediate.7.lemma.6}
Z_{(\gamma_{l},\gamma_{r-1}]}(Q)\leqslant
Z_{(\gamma_{l},\gamma_{r-1}]}(Q_{1}).
\end{equation}
Moreover, since $l\geqslant2$ by assumption, we have $Q(z)\neq0$
for $z\in[\gamma_1,\gamma_l]$, but $Q_{1}$ has \textit{at~least}
one zero in $[\gamma_1,\gamma_l]$ as we mentioned above.
Consequently, in this case, we can improve the
inequality~\eqref{intermediate.5.lemma.6} to the following one:
\begin{equation}\label{lemma.6.proof.10}
0=Z_{[\gamma_{1},\gamma_{l}]}(Q)\leqslant
Z_{[\gamma_{1},\gamma_{l}]}(Q_{1})-1.
\end{equation}
Furthermore, we have
\begin{equation}\label{lemma.6.proof.11}
Z_{(\gamma_{r-1},\gamma_r]}(Q)\leqslant1+Z_{(\gamma_{r-1},\gamma_r]}(Q_{1})
\end{equation}
by Lemma~\ref{lemma.5} and by the fact that $Q(\gamma_r)\neq0$
(see~\eqref{intermediate.10.lemma.6}). Summing the
inequalities~\eqref{intermediate.7.lemma.6}--\eqref{lemma.6.proof.11},
we again obtain~\eqref{intermediate.6.lemma.6}.
\item[c)] Let, finally, $\gamma_r$ be a zero of $\varphi''$ of odd
multiplicity and let $\varphi(z)\varphi''(z)>0$
in~$(\gamma_{r-1},\gamma_r)$, but let $l=1$. In this case, by the
same reasoning as above, the
inequalities~\eqref{intermediate.7.lemma.6} (for $l=1$)
and~\eqref{lemma.6.proof.11} hold. Recalling the final remarks in
Cases I.1 and I.2, we conclude that $\varphi''$ has an
\textit{even} number of zeroes in the interval
$(\gamma_{1},\gamma_{r-1}]$. Since $\varphi''$ has an odd
number~$q$ of zeroes in $[\gamma_1,\gamma_r]$ and $\gamma_r$ is a
zero of~$\varphi''$ of odd multiplicity by assumption, $\gamma_1$
must be a zero of~$\varphi''$ of even multiplicity.
But~\eqref{all.main.functions} shows that every zero of
$\varphi''$ of even multiplicity is a zero of $Q_{1}$ of odd
multiplicity. Consequently, $\gamma_1$ is a zero of $Q_{1}$ of
multiplicity \textit{at least} one, and we have
\begin{equation}\label{intermediate.9.lemma.6}
0=Z_{\{\gamma_{1}\}}(Q)\leqslant Z_{\{\gamma_{1}\}}(Q_{1})-1.
\end{equation}
Summing the
inequalities~\eqref{intermediate.7.lemma.6},~\eqref{intermediate.7.lemma.6}
and~\eqref{intermediate.9.lemma.6}, we also obtain the
inequality~\eqref{intermediate.6.lemma.6}.
\end{itemize}

Thus, the number of zeroes of $Q$ in
the~interval~$[\gamma_{1},\gamma_{r}]$ does not exceed the number
of zeroes of $Q_{1}$ in this interval. This implies the validity
of the inequalities~\eqref{intermediate.11.lemma.6} where the
lower bound cannot be improved by parity considerations, since the
number of zeroes of $Q$ in the interval~$[\gamma_{1},\gamma_{r}]$
is even as we showed above (see~\eqref{intermediate.10.lemma.6}).

\vspace{2mm}

\noindent II. Now let $q=2M$ and
$\varphi(\gamma_{1}-\varepsilon)\varphi''(\gamma_{1}-\varepsilon)>0$
for all sufficiently small~$\varepsilon>0$.

At first, we assume that one of the $\gamma_i$, $i=1,\ldots,r$, is
a zero of $\varphi''$ of odd multiplicity. Let $\gamma_j$,
$1\leqslant j\leqslant r-1$, be\footnote{Since $\varphi''$ has an
\textit{even} number of real zeroes in~$[\gamma_1,\gamma_{r}]$ by
assumption, there is always \textit{even} number of
zeroes~of~$\varphi''$ of odd multiplicities
in~$[\gamma_1,\gamma_{r}]$. Thus, $\gamma_{r}$ cannot be the
closest to $\gamma_1$ (and thereby the only) zero of~$\varphi''$
of~odd~multiplicity.} the closest to $\gamma_1$ (possibly
$\gamma_1$ itself) zero of~$\varphi''$ of~odd multiplicity.
Since~$\varphi''$ has an odd number of zeroes in the
interval~$[\gamma_1,\gamma_j]$, $\varphi''$ has different signs in
the interval~$(\gamma_{j},\gamma_{j+1})$ and in the left-sided
neighbourhood of $\gamma_1$, where $\varphi(z)\varphi''(z)$ is
positive by assumption. Consequently, $\varphi(z)\varphi''(z)$ is
negative in~$(\gamma_{j},\gamma_{j+1})$, because $\varphi(z)\neq0$
for $z\in[\gamma_1,\gamma_r]$ by assumption. Hence, according
to~\eqref{main.function.2}, $Q(z)\neq0$ in the
interval~$(\gamma_{j},\gamma_{j+1})$, and we have
\begin{equation}\label{corol.4.intermediate.7}
0=Z_{(\gamma_{j},\gamma_{j+1})}(Q)\leqslant
-1+Z_{(\gamma_{j},\gamma_{j+1})}(Q_{1}),
\end{equation}
since $Q_{1}$ has an odd number (at least one) of zeroes, counting
multiplicities, in $(\gamma_{j},\gamma_{j+1})$ by
Lemma~\ref{lemma.1} applied to $Q_1$ on the interval
$(\gamma_{j},\gamma_{j+1})$. On the other hand, inasmuch
as~$\varphi''$ has an odd number of zeroes, counting
multiplicities, in both the intervals~$[\gamma_1,\gamma_{j}]$
and~$[\gamma_{j+1},\gamma_{r}]$, it follows from Case I
that\footnote{If $\gamma_j=\gamma_1$ and $\gamma_1$ is a zero
of~$\varphi''$ of multiplicity $K>1$, then $\gamma_1$ is a zero
of~$Q_{1}$ of multiplicity $K-1$ according
to~\eqref{all.main.functions}. If $\gamma_1$ is a simple zero
of~$\varphi''$, then $\gamma_1$ is not a zero of~$Q_{1}$. In both
cases,~\eqref{corol.4.intermediate.8} reduces to
the~following~inequality
\begin{equation*}
0=Z_{\{\gamma_1\}}(Q)\leqslant Z_{\{\gamma_1\}}(Q_{1}).
\end{equation*}
}
\begin{equation}\label{corol.4.intermediate.8}
0\leqslant Z_{[\gamma_1,\gamma_{j}]}(Q)\leqslant
Z_{[\gamma_1,\gamma_{j}]}(Q_{1}),
\end{equation}
\begin{equation}\label{corol.4.intermediate.9}
0\leqslant Z_{[\gamma_{j+1},\gamma_{r}]}(Q)\leqslant
Z_{[\gamma_{j+1},\gamma_{r}]}(Q_{1}).
\end{equation}
The
inequalities~\eqref{corol.4.intermediate.7}--\eqref{corol.4.intermediate.9}
together give the inequality~\eqref{corol.4.intermediate.10}.

\vspace{2mm}

Now we assume that all $\gamma_i$, $i=1,\ldots,r$, are zeroes
of~$\varphi''$ of even multiplicities. Therefore,
\begin{equation}\label{corol.4.intermediate.9999}
1\leqslant r\leqslant M
\end{equation}
and $\varphi''$ has equal signs in all intervals
$(\gamma_{i},\gamma_{i+1})$, $i=1,\ldots,r-1$. Since
$\varphi(z)\varphi''(z)$ is positive in a small left-sided
neighbourhood of $\gamma_1$ by assumption, it is positive in each
of the intervals~$(\gamma_{i},\gamma_{i+1})$, $i=1,\ldots,r-1$.
Thus, we have
\begin{equation}\label{corol.4.intermediate.11}
0\leqslant Z_{X_1}(Q)\leqslant r-1+Z_{X_1}(Q_{1})\leqslant
M-1+Z_{X_1}(Q_{1}),
\end{equation}
where $X_1=\bigcup_{i=1}^{r-1}(\gamma_{i},\gamma_{i+1})$. Indeed,
since $Q(\gamma_i)<0$ for $i=1,\ldots,r$
(see~\eqref{intermediate.10.lemma.6}), $Q$ has an even number of
zeroes in each of the interval~$(\gamma_i,\gamma_{i+1})$. Thus, we
cannot improve the lower estimate
in~\eqref{corol.4.intermediate.11} by parity considerations. The
upper bound follows from Lemma~\ref{lemma.5} applied to the
intervals $(\gamma_{i},\gamma_{i+1})$ and
from~\eqref{corol.4.intermediate.9999}. Further,~$Q_{1}$ has
exactly $2M-r$ zeroes, counting multiplicities, among the points
$X_2=\bigcup_{i=1}^{r}\{\gamma_i\}$. Consequently,
by~\eqref{corol.4.intermediate.9999}, we have
\begin{equation}\label{corol.4.intermediate.12}
Z_{X_2}(Q_{1})=2M-r\geqslant M.
\end{equation}
Now we note that $[\gamma_1,\gamma_r]=X_1\bigcup X_2$ and
$Z_{X_2}(Q)=0$. Therefore,
from~\eqref{corol.4.intermediate.11}--\eqref{corol.4.intermediate.12}
it follows that
\begin{equation*}
\begin{array}{c}
0\,\leqslant\,Z_{[\gamma_1,\gamma_r]}(Q)\,=\,Z_{X_1}(Q)\,\leqslant\,
M-1+Z_{X_1}(Q_{1})\,=\\
 \\
=\,M-1+Z_{[\gamma_1,\gamma_r]}(Q_{1})-Z_{X_2}(Q_{1})\,\leqslant\,-1+Z_{[\gamma_1,\gamma_r]}(Q_{1}),
\end{array}
\end{equation*}
where the lower bound cannot be improved by parity considerations,
since $Q$ has an even number of zeroes in the
interval~$[\gamma_{1},\gamma_{r}]$ as we showed above
(see~\eqref{intermediate.10.lemma.6}). So, the
inequalities~\eqref{corol.4.intermediate.10} are also valid in
this case.

\vspace{2mm}

\noindent III. At last, let $q=2M$ and let
\begin{equation}\label{new.lemma.6}
\varphi(\gamma_{1}-\varepsilon)\varphi''(\gamma_{1}-\varepsilon)<0
\end{equation}
for sufficiently small $\varepsilon>0$. If $\varphi''$ has zeroes
of odd multiplicities in the interval $[\gamma_1,\gamma_r]$, then
this case differs from Case II only by the sign of the function
$\varphi\varphi''$ in the interval $(\gamma_j,\gamma_{j+1})$,
where~$\gamma_j$ is the closest to $\gamma_1$ (possibly~$\gamma_1$
itself) zero of~$\varphi''$ of odd multiplicity. Thus, now we have
$\varphi(z)\varphi''(z)>0$ in~$(\gamma_j,\gamma_{j+1})$, since
$\varphi''$ has an odd number of zeroes in~$[\gamma_1,\gamma_j]$
(see~\eqref{new.lemma.6}). Therefore, instead of the
inequalities~\eqref{corol.4.intermediate.7} of Case II, we have
\begin{equation}\label{lemma6.5.intermediate.1}
0\leqslant Z_{(\gamma_{j},\gamma_{j+1})}(Q)\leqslant
1+Z_{(\gamma_{j},\gamma_{j+1})}(Q_{1})
\end{equation}
by Lemma~\ref{lemma.5}. As in Cases I and II, the lower bound in~\eqref{lemma6.5.intermediate.1}
cannot be improved by parity considerations.
The~inequalities~\eqref{corol.4.intermediate.8},
\eqref{corol.4.intermediate.9} and~\eqref{lemma6.5.intermediate.1}
imply~\eqref{lemma6.5.main.1}.

\vspace{2mm}

If all $\gamma_i$, $i=1,\ldots,r$, are zeroes of~$\varphi''$ of
even multiplicities, then~\eqref{new.lemma.6} implies
$\varphi(z)\varphi''(z)\leqslant0$ in $[\gamma_1,\gamma_r]$, so we
have
\begin{equation}\label{new.2.lemma6}
0=Z_{[\gamma_1,\gamma_r]}(Q)\leqslant
Z_{[\gamma_1,\gamma_r]}(Q_{1}),
\end{equation}
which also gives~\eqref{lemma6.5.main.1}.
\end{proof}

Further, we derive a very useful bound on the number of real
zeroes of $Q$ in terms of the number of real zeroes of $Q_1$
between two consecutive zeroes of $\varphi'$.
\begin{theorem}\label{Theorem.conseq.deriv.zeroes}
Let $\varphi\in U_{2n}^*$. If $\beta_1$ and $\beta_2$,
$\beta_1<\beta_2$, are consecutive real zeroes of~$\varphi'$ and
$\varphi(z)\neq0$ for $z\in[\beta_1,\beta_2]$, then
\begin{equation}\label{main.work.formula.2}
1\leqslant
Z_{(\beta_1,\beta_2)}(Q)\leqslant1+Z_{(\beta_1,\beta_2)}(Q_{1}).
\end{equation}
\end{theorem}
\begin{proof} By Lemma~\ref{lemma.1}, $Q$ has an odd number of
zeroes in the interval $(\beta_1,\beta_2)$, therefore,
\begin{equation}\label{Theorem.conseq.deriv.zeroes.proof.1}
1\leqslant Z_{(\beta_1,\beta_2)}(Q).
\end{equation}
According to Rolle's theorem, $\varphi''$ has an odd number of
real zeroes, counting multiplicities, in the interval
$(\beta_1,\beta_2)$. Let $\gamma_S$ and $\gamma_L$ be the smallest
and the largest zeroes of~$\varphi''$ in~$(\beta_1,\beta_2)$. If
$\gamma_S<\gamma_L$, then from Case~I of Lemma~\ref{lemma.6.5} it
follows that
\begin{equation}\label{intermediate.11.lemma.6.1}
0\leqslant Z_{[\gamma_S,\gamma_L]}(Q)\leqslant
Z_{[\gamma_S,\gamma_L]}(Q_{1}).
\end{equation}
If $\gamma_S=\gamma_L$, then (see~\eqref{intermediate.10.lemma.6})
\begin{equation}\label{Theorem.conseq.deriv.zeroes.proof.2}
0=Z_{\{\gamma_S\}}(Q)\leqslant Z_{\{\gamma_S\}}(Q_{1}).
\end{equation}

Without loss of generality, we may assume that the function
$\varphi(z)\varphi''(z)$ is positive in the
interval~$(\beta_1,\gamma_S)$. Then $\varphi(z)\varphi''(z)<0$ for
$z\in(\gamma_L,\beta_2)$, since~$\varphi''$ has an~odd number of
zeroes in~$[\gamma_S,\gamma_L]$ and $\varphi(z)\neq0$ in
$(\beta_1,\beta_2)$ by assumption. Thus,
by~\eqref{main.function.2}, we have
\begin{equation}\label{intermediate.13.lemma.6}
0=Z_{(\gamma_L,\beta_{2})}(Q)\leqslant
Z_{(\gamma_L,\beta_{2})}(Q_{1}).
\end{equation}
We cannot improve this inequality by parity considerations, since
$Q_1$ has an even number of zeroes in $(\gamma_L,\beta_{2})$ by
Lemma~\ref{lemma.1} applied to $Q_1$ on the
interval~$(\gamma_L,\beta_{2})$. Lemma~\ref{lemma.5} now gives
\begin{equation}\label{intermediate.12.lemma.6}
Z_{(\beta_1,\gamma_S)}(Q)\leqslant1+
Z_{(\beta_1,\gamma_S)}(Q_{1}).
\end{equation}
The
inequalities~\eqref{Theorem.conseq.deriv.zeroes.proof.1}--\eqref{intermediate.12.lemma.6}
imply~\eqref{main.work.formula.2}.
\end{proof}

The following corollary is a very useful generalization of
Theorem~\ref{Theorem.conseq.deriv.zeroes}.
\begin{corol}\label{biggest.technical.theorem}
Let $\varphi\in U_{2n}^*$ and let $\beta^{(1)}$ and $\beta^{(2)}$,
$\beta^{(1)}\leqslant\beta^{(2)}$, be zeroes of $\varphi'$ and let
$\varphi(z)\neq0$ for $z\in[\beta^{(1)},\beta^{(2)}]$. If
$\varphi'$ has exactly $q\geqslant2$ real zeroes, counting
multiplicities, in the interval $[\beta^{(1)},\beta^{(2)}]$, then
\begin{equation}\label{main.work.formula.5}
q-1\leqslant Z_{[\beta^{(1)},\beta^{(2)}]}(Q)\leqslant
q-1+Z_{[\beta^{(1)},\beta^{(2)}]}(Q_{1}).
\end{equation}
\end{corol}
\begin{proof}
In fact, if some real number $\beta$ is a~zero of $\varphi'$ of
multiplicity $M$, then $\beta$ is a~zero of $Q$ of
multiplicity~$M-1$ according to~\eqref{main.function.2}.
Therefore, the following inequalities hold:
\begin{equation}\label{biggest.technical.theorem.proof.1}
M-1\leqslant Z_{\{\beta\}}(Q)\leqslant M-1+Z_{\{\beta\}}(Q_{1}).
\end{equation}
Thus, if the function $\varphi'$ has exactly $l$, $1\leqslant
l\leqslant q$, \textit{distinct} real zeroes, say
$\beta^{(1)}=\beta_1<\beta_2<\ldots<\beta_{l}=\beta^{(2)}$, in
the~interval~$[\beta^{(1)},\beta^{(2)}]$, then $Q$ has exactly
$q-l$ real zeroes at the points~$\beta_i$, $i=1,2,\ldots,l$, so
from~\eqref{biggest.technical.theorem.proof.1} it follows
\begin{equation}\label{biggest.technical.theorem.proof.4}
q-l\leqslant Z_{X_2}(Q)\leqslant q-l+Z_{X_2}(Q_{1}),
\end{equation}
where $X_2=\bigcup_{i=1}^{l}\{\beta_i\}$. Note that
$Z_{X_2}(Q_{1})=0$ indeed, because $X_2$ is the set of all poles
of the function $Q_1$ on the interval $[\beta^{(1)},\beta^{(2)}]$.

\vspace{2mm}

If $l=1$, then $\beta^{(1)}=\beta^{(2)}$ is a zero of $\varphi'$
of multiplicity $q$. By~\eqref{biggest.technical.theorem.proof.4},
we have the following inequalities
\begin{equation}\label{biggest.technical.theorem.proof.2}
q-1\leqslant Z_{\{\beta^{(1)}\}}(Q)\leqslant
q-1+Z_{\{\beta^{(1)}\}}(Q_{1}),
\end{equation}
which are equivalent to~\eqref{main.work.formula.5}, since
$Z_{\{\beta^{(1)}\}}(Q)=Z_{[\beta^{(1)},\beta^{(2)}]}(Q)$ and
$Z_{\{\beta^{(1)}\}}(Q_1)=Z_{[\beta^{(1)},\beta^{(2)}]}(Q_1)$ in
this case.

\vspace{2mm}

Now let $l>1$. Then the points $\beta_i$, $i=1,2,\ldots,l$, divide
the interval $(\beta_1,\beta_l)=(\beta^{(1)},\beta^{(2)})$ into
$l-1$ subintervals $(\beta_i,\beta_{i+1})$, $i=1,2,\ldots,l-1$.
Since $\varphi(z)\neq0$ on $[\beta_i,\beta_{i+1}]$,
$i=1,\ldots,l-1$, by assumption, we can apply
Theorem~\ref{Theorem.conseq.deriv.zeroes} to each of those
subintervals to obtain
\begin{equation}\label{biggest.technical.theorem.proof.3}
l-1\leqslant Z_{X_1}(Q)\leqslant l-1+Z_{X_1}(Q_{1}),
\end{equation}
where $X_1=\bigcup_{i=1}^{l-1}(\beta_i,\beta_{i+1})$.

\vspace{2mm}

Since $X_1\cup X_2=[\beta^{(1)},\beta^{(2)}]$, we
get~\eqref{main.work.formula.5} by summing the
inequalities~\eqref{biggest.technical.theorem.proof.4}
and~\eqref{biggest.technical.theorem.proof.3} or, if
$\beta^{(1)}=\beta^{(2)}$, directly
from~\eqref{biggest.technical.theorem.proof.2}.
\end{proof}

We now analyze a relation between the number of real zeroes of $Q$
and the number of real zeroes of $Q_1$ in an interval adjacent to
a zero of $\varphi$. This analysis justifies the introduction of
\textit{property~A} (Definition~\ref{def.cond.A.point}).
\begin{lemma}\label{lemma.7}
Let $\varphi\in U_{2n}^*$ and let $\beta$ be a real zero of
$\varphi'$ and let $\alpha>\beta$ be a real zero of $\varphi$ such
that $\varphi'(z)\neq0$ and $\varphi(z)\neq0$ for
$z\in(\beta,\alpha)$.
\begin{itemize}
\item [\emph{I.}] If $Q_{1}$ has an even number of zeroes in
the interval $(\beta,\alpha)$, then
\begin{equation}\label{main.work.formula.3}
0\leqslant Z_{(\beta,\alpha)}(Q)\leqslant
Z_{(\beta,\alpha)}(Q_{1}).
\end{equation}
\item[\emph{II.}] If $Q_{1}$ has an odd number of zeroes in
the interval $(\beta,\alpha)$, then
\begin{equation}\label{main.work.formula.3.5}
0\leqslant Z_{(\beta,\alpha)}(Q)\leqslant1+
Z_{(\beta,\alpha)}(Q_{1}).
\end{equation}
\end{itemize}
\end{lemma}
\begin{proof}
According to Lemma~\ref{lemma.1}, $Q$ has an even number of zeroes
in the interval $(\beta,\alpha)$. In particular, $Q$ can have no
zeroes in this interval. Thus, the lower bounds for
$Z_{(\beta,\alpha)}(Q)$ in~\eqref{main.work.formula.3} and
in~\eqref{main.work.formula.3.5} cannot be improved by parity
considerations.

\vspace{2mm}

\noindent I. Let $Q_{1}$ have an even number of zeroes
in~$(\beta_1,\alpha)$ (or have no zeroes at all in this interval).

\begin{itemize}

\item [I.1.] At first, we assume that $\varphi''$ has an even
number of zeroes, counting multiplicities, in~$(\beta,\alpha)$,
say $\beta<\gamma_1\leqslant\ldots\leqslant\gamma_{2M}<\alpha$,
where $M\geqslant0$. It is easy to see that, for sufficiently
small~$\varepsilon>0$, we have the following inequality
\begin{equation}\label{lemma.8.new.1}
\varphi(\beta+\varepsilon)\varphi''(\beta+\varepsilon)<0.
\end{equation}
In fact, if $\varphi(z)$ is positive (negative) in
$(\beta,\alpha)$, then it is a decreasing (increasing) function,
since $\varphi(\alpha)=0$. Consequently, $\varphi(z)\varphi'(z)<0$
in the interval $(\beta,\alpha)$. By the same reasoning, we have
$\varphi'(\beta+\varepsilon)\varphi''(\beta+\varepsilon)>0$ for
all sufficiently small $\varepsilon>0$. Thus, the
inequality~\eqref{lemma.8.new.1} is true.

Let $\varphi''$ have zeroes in the interval $(\beta,\alpha)$, that
is, let $M>0$, then from~\eqref{lemma.8.new.1} it follows that
$Q(z)\neq0$ for $z\in(\beta,\gamma_1)$
(see~\eqref{main.function.2}), so we have
\begin{equation}\label{lemma.7.intermediate.1}
0=Z_{(\beta,\gamma_1)}(Q)\leqslant Z_{(\beta,\gamma_1)}(Q_{1}).
\end{equation}

If $\gamma_1<\gamma_{2M}$, then by~\eqref{lemma.8.new.1} and
Case~III of Lemma~\ref{lemma.6.5}, we obtain
\begin{equation}\label{lemma.7.intermediate.6}
0\leqslant Z_{[\gamma_1,\gamma_{2M}]}(Q)\leqslant1+
Z_{[\gamma_1,\gamma_{2M}]}(Q_{1}).
\end{equation}

Moreover, from~\eqref{lemma.8.new.1} it also follows that
$\varphi(z)\varphi''(z)<0$ in~$(\gamma_{2M},\alpha)$, since
$\varphi''$ has equal signs in the~intervals~$(\beta,\gamma_1)$
and~$(\gamma_{2M},\alpha)$. Therefore,
\begin{equation}\label{lemma.7.intermediate.7}
0=Z_{(\gamma_{2M},\alpha)}(Q)\leqslant
Z_{(\gamma_{2M},\alpha)}(Q_{1}).
\end{equation}
Thus,
from~\eqref{lemma.7.intermediate.1}--\eqref{lemma.7.intermediate.7}
we obtain
\begin{equation}\label{lemma.7.intermediate.7.5}
0\leqslant Z_{(\beta,\alpha)}(Q)\leqslant1+
Z_{(\beta,\alpha)}(Q_{1}),
\end{equation}
which is equivalent to~\eqref{main.work.formula.3}, since the
number $Z_{(\beta,\alpha)}(Q)$ is even by Lemma~\ref{lemma.1} and
$Z_{(\beta_,\alpha)}(Q_{1})$ is even by assumption.

Let now $\gamma_1=\gamma_{2M}$, that is, let $\gamma_1$ be a
unique zero of $\varphi''$ of multiplicity~$2M$
in~$(\beta,\alpha)$. Since $Z_{\{\gamma_1\}}(Q)=0$
by~\eqref{intermediate.10.lemma.6} and $Q_1$ has a zero of
multiplicity $2M-1$ at the point $\gamma_1$
(see~\eqref{all.main.functions}), we obtain the following
inequality
\begin{equation}\label{lemma.7.intermediate.7.2}
0=Z_{\{\gamma_1\}}(Q)\leqslant1+Z_{\{\gamma_1\}}(Q_{1})=2M.
\end{equation}

Likewise, the inequality~\eqref{lemma.7.intermediate.7} holds for
the same reasoning as in the case $\gamma_1<\gamma_{2M}$.

Thus, the
inequalities~\eqref{lemma.7.intermediate.1},~\eqref{lemma.7.intermediate.7},~\eqref{lemma.7.intermediate.7.2}
imply~\eqref{lemma.7.intermediate.7.5}, which is equivalent
to~\eqref{main.work.formula.3}, since the number
$Z_{(\beta,\alpha)}(Q)$ is even by Lemma~\ref{lemma.1} and
$Z_{(\beta_,\alpha)}(Q_{1})$ is even by assumption.

\vspace{2mm}

At last, let $\varphi''(z)\neq0$ for $z\in(\beta,\alpha)$. Then
from~\eqref{lemma.8.new.1} and~\eqref{main.function.2} it follows
that
\begin{equation}\label{lemma.7.intermediate.3}
0=Z_{(\beta,\alpha)}(Q)\leqslant Z_{(\beta,\alpha)}(Q_{1}),
\end{equation}
which is exactly~\eqref{main.work.formula.3}.

Note that the upper bounds in~\eqref{lemma.7.intermediate.1}
and~\eqref{lemma.7.intermediate.3} cannot be improved by parity
considerations, since $Q_{1}$ has an even number of zeroes on both
intervals~$(\beta,\gamma_1)$ and $(\beta,\alpha)$.
\item [I.2.] Now we assume that $\varphi''$ has an odd number of
zeroes, counting multiplicities, in the interval~$(\beta,\alpha)$,
say $\beta<\gamma_1\leqslant\ldots\leqslant\gamma_{2M+1}<\alpha$,
where $M\geqslant0$.  As in Case I.1, the
inequalities~\eqref{lemma.8.new.1}
and~\eqref{lemma.7.intermediate.1} hold.

If $\gamma_1<\gamma_{2M+1}$, then from Case~I of
Lemma~\ref{lemma.6.5} it follows that
\begin{equation}\label{lemma.7.intermediate.16}
0\leqslant Z_{[\gamma_1,\gamma_{2M+1}]}(Q)\leqslant
Z_{[\gamma_1,\gamma_{2M+1}]}(Q_{1}),
\end{equation}

If $\gamma_1=\gamma_{2M+1}$, that is, $\gamma_1$ is a unique zero
of $\varphi''$ of multiplicity~$2M+1$ in the
interval~$(\beta,\alpha)$, then $Z_{\{\gamma_1\}}(Q)=0$
(see~\eqref{intermediate.10.lemma.6}) and $Q_1$ has a zero of
multiplicity $2M$ at the point $\gamma_1$
(see~\eqref{all.main.functions}). Therefore, we have
\begin{equation}\label{lemma.7.intermediate.17}
0=Z_{\{\gamma_1\}}(Q)\leqslant Z_{\{\gamma_1\}}(Q_{1})=2M.
\end{equation}

We note that $\varphi(z)\varphi''(z)>0$
in~$(\gamma_{2M+1},\alpha)$, since $\varphi''$ has different signs
in intervals~$(\beta,\gamma_1)$ and~$(\gamma_{2M+1},\alpha)$ and
the inequality~\eqref{lemma.8.new.1} holds. Therefore, by
Lemma~\ref{lemma.5}, we have
\begin{equation}\label{lemma.7.intermediate.18}
0\leqslant
Z_{(\gamma_{2M+1},\alpha)}(Q)\leqslant1+Z_{(\gamma_{2M+1},\alpha)}(Q_{1}).
\end{equation}
Note that the lower bound in~\eqref{lemma.7.intermediate.18}
cannot be improved by parity considerations, since $Q$ has an even
number of zeroes on the interval $(\gamma_{2M+1},\alpha)$ as it
follows from the inequality~\eqref{lemma.8.new.1} and from
Lemmata~\ref{lemma.1} and~\ref{lemma.6.5}.

Thus,
from~\eqref{lemma.7.intermediate.1},~\eqref{lemma.7.intermediate.16}--\eqref{lemma.7.intermediate.18}
we obtain the inequalities~\eqref{lemma.7.intermediate.7.5} again.
As above, these inequalities are equivalent
to~\eqref{main.work.formula.3}, since the number
$Z_{(\beta,\alpha)}(Q)$ is even by Lemma~\ref{lemma.1} and
$Z_{(\beta,\alpha)}(Q_{1})$ is an even number by assumption.
\end{itemize}

\vspace{2mm}

\noindent II. If $Q_{1}$ has an odd number of zeroes
in~$(\beta,\alpha)$, then, by the same argument as in Case I, we
obtain the inequalities~\eqref{lemma.7.intermediate.7.5}, which
coincide with the inequalities~\eqref{main.work.formula.3.5}. But
unlike in Case I, these inequalities cannot be improved by parity
consideration, since $Z_{(\beta,\alpha)}(Q_1)$ is odd by
assumption.
\end{proof}

\begin{note}\label{remark.2.5}
Lemma~\ref{lemma.7} is true if $\beta=-\infty$ and
$\varphi(z)\neq0$, $\varphi'(z)\neq0$ for $z\in(-\infty,\alpha)$
(see~Remark~\ref{remark.2.4}). Lemma~\ref{lemma.7} is also valid
in the case when $\beta>\alpha$ or in the interval
$(\alpha,+\infty)$.
\end{note}

Note that the upper bound in~\eqref{main.work.formula.3.5} cannot
be improved. This is confirmed by the following example
constructed recently by S.\,Edwards~\cite{Edwards}.
\begin{example}\label{example.Edwards}
The polynomial

\vspace{-2mm}
\begin{equation}\label{poly.example.Edwards}
p(z)=z(z^2-1/4)(z^2+1)^{25}
\end{equation}
\vspace{-2mm}
has $3$ real zeroes: $\pm\dfrac 12$ and $0$. Its derivative
$p'(z)$ has only two real zeroes: $\pm\beta$, where

\vspace{-1mm}
\begin{equation*}
\displaystyle\beta=\dfrac{\sqrt{4134+106\sqrt{2369}}}{212}\approx0.4547246059.
\end{equation*}
\vspace{-3mm}

In the interval $(-\beta,\beta)$, the polynomial $p$ has only one
simple zero $\alpha=0$. Straightforward computation shows that, in
this case, we have
$Z_{(-\beta,\alpha)}(Q)=Z_{(\alpha,\beta)}(Q)=2$ and
$Z_{(-\beta,\alpha)}(Q_1)=Z_{(\alpha,\beta)}(Q_1)=1$, where
$Q[p](z)=\left(p'(z)/p(z)\right)'$ and
$Q_1[p](z)=\left(p''(z)/p'(z)\right)'$. Therefore, for the
polynomial $p$, the inequalities~\eqref{main.work.formula.3.5}
have the form
\begin{equation*}
Z_{(-\beta,\alpha)}(Q)=Z_{(-\beta,\alpha)}(Q_1)+1,
\end{equation*}
\begin{equation*}
Z_{(\alpha,\beta)}(Q_1)=Z_{(\alpha,\beta)}(Q_1)+1.
\end{equation*}
%
\end{example}

For functions with \textit{property A}, Lemma~\ref{lemma.7} has
the following form.
\begin{theorem}\label{corol.new}
Let $\varphi\in U_{2n}^*$ and let $\beta_1$ and $\beta_2$,
$\beta_1<\beta_2$, be real zeroes of $\varphi'$ and let $\varphi$
have a unique real zero $\alpha$ in the interval
$(\beta_1,\beta_2)$ such that $\varphi'(z)\neq0$ for all
$z\in(\beta_1,\alpha)\cup(\alpha,\beta_2)$. If~$\varphi$ possesses
property A at its zero $\alpha$, then
\begin{equation}\label{main.work.formula.new}
0\leqslant Z_{(\beta_1,\beta_2)}(Q)\leqslant
Z_{(\beta_1,\beta_2)}(Q_{1}).
\end{equation}
\end{theorem}
\begin{proof} At first, we note that $\varphi(\beta_1)\varphi(\beta_2)\neq0$ by
Rolle's theorem. Consider now two situations: the number $\alpha$
is a zero of $Q_1$ of even multiplicity (possibly not a zero of
$Q_1$) and $\alpha$ is a zero of $Q_1$ of an odd multiplicity.

\vspace{2mm}

\noindent Case I. Let the number $\alpha$ be a zero of $Q_1$ of an
even multiplicity, including the situation $Q_1(\alpha)\neq0$. By
the same method as in the proof of Corollary~\ref{corol.3}, one
can show that~$Q_{1}$ has an even number of zeroes between two
consecutive zeroes of $\varphi'$, therefore, $Q_1$ has an even
number of zeroes in the interval $(\beta_1,\beta_2)$. Thus, there
can be only two possibilities:

\begin{itemize}
\item [I.1.] The function $Q_1$ has an even number of zeroes in
each of the intervals $(\beta_1,\alpha)$ and $(\alpha,\beta_2)$.
Then the inequalities~\eqref{main.work.formula.new}
follow\footnote{In this case, we do not use the fact that
$\varphi$ possesses property~A at its zero~$\alpha$.} from the
fact that $Z_{\{\alpha\}}(Q)\neq0$ and from
the~inequalities~\eqref{main.work.formula.3}, which hold in this
case according to Lemma~\ref{lemma.7} (see also
Remark~\ref{remark.2.5}).
\item [I.2.]  The function $Q_1$ has an odd number of zeroes in
each of the intervals $(\beta_1,\alpha)$ and $(\alpha,\beta_2)$.
Then Lemma~\ref{lemma.7} and Remark~\ref{remark.2.5} provide the
validity of the inequalities~\eqref{main.work.formula.3.5} in each
of the intervals~$(\beta_1,\alpha)$ and~$(\alpha,\beta_2)$:
\begin{equation}\label{corol.3.proof.1}
0\leqslant Z_{(\beta_1,\alpha)}(Q)\leqslant1+
Z_{(\beta_1,\alpha)}(Q_{1}),
\end{equation}
\begin{equation}\label{corol.3.proof.2}
0\leqslant Z_{(\alpha,\beta_2)}(Q)\leqslant1+
Z_{(\alpha,\beta_2)}(Q_{1}).
\end{equation}

Since $\varphi$ possesses property A at $\alpha$, we have
$Q(z)\neq0$ in at least one of the intervals $(\beta_1,\alpha)$
and~$(\alpha,\beta_2)$.

If $Q(z)\neq0$ for $z\in(\beta_1,\alpha)$, then
$Z_{(\beta_1,\alpha)}(Q)=0$. Since $Q_1$ has an odd number of
zeroes in the interval~$(\beta_1,\alpha)$ (hence at least one) by
assumption, the inequality~\eqref{corol.3.proof.1} can be improved
to yield
\begin{equation}\label{corol.new.prof.1}
0=Z_{(\beta_1,\alpha)}(Q)\leqslant Z_{(\beta_1,\alpha)}(Q_{1})-1.
\end{equation}
Together with~\eqref{corol.3.proof.2} and the fact that
$Z_{\{\alpha\}}(Q)\neq0$, this inequality
gives~\eqref{main.work.formula.new}.

If $Q(z)\neq0$ in the interval~$(\alpha,\beta_2)$, then
\eqref{main.work.formula.new} can be proved analogously.
\end{itemize}

\vspace{2mm}

\noindent Case II. Let the number $\alpha$ be a zero of $Q_1$ of
odd multiplicity (hence at least one).

By assumption, $\varphi$ possesses \textit{property~A} at
$\alpha$. At first, we assume that $Q(z)\neq0$ in the
interval~$(\beta_1,\alpha)$. As we mentioned in Case~I, $Q_1$ has
an even number of zeroes in the interval $(\beta_1,\beta_2)$, so
there can be only the following two situations:

\begin{itemize}
\item [II.1.] The function $Q_1$ has an odd number of zeroes in the
interval $(\beta_1,\alpha)$ and it has an~even number of zeroes in
$(\alpha,\beta_2)$. Then as above, we have the
inequality~\eqref{corol.new.prof.1} in the interval
$(\beta_1,\alpha)$ and the inequality
\begin{equation*}\label{corol.3.proof.3}
0\leqslant Z_{(\alpha,\beta_2)}(Q)\leqslant
Z_{(\alpha,\beta_2)}(Q_{1}),
\end{equation*}
which follows from Lemma~\ref{lemma.7} and
Remark~\ref{remark.2.5}. Together with~\eqref{corol.new.prof.1}
and the fact that $Z_{\{\alpha\}}(Q)\neq0$, this inequality
gives~\eqref{main.work.formula.new}.

\item [II.2.] The function $Q_1$ has an even number of zeroes in the
interval $(\beta_1,\alpha)$ and $Q_1$ has an~odd number of zeroes
in $(\alpha,\beta_2)$. Then the number $Z_{(\beta_1,\alpha)}(Q_1)$
can also be zero and we have the following inequality
\begin{equation}\label{corol.3.proof.4}
0=Z_{(\beta_1,\alpha)}(Q)\leqslant Z_{(\beta_1,\alpha)}(Q_{1}).
\end{equation}
By Lemma~\ref{lemma.7} and Remark~\ref{remark.2.5}, we obtain
the~inequality~\eqref{corol.3.proof.2}. But since $Q_1(\alpha)=0$
by assumption and $Q(\alpha)\neq0$ by~\eqref{main.function.2}, we
have the inequality
\begin{equation*}\label{corol.3.proof.5}
0=Z_{\{\alpha\}}(Q)\leqslant-1+Z_{\{\alpha\}}(Q_1),
\end{equation*}
which, together with the~inequality~\eqref{corol.3.proof.2},
implies
\begin{equation*}\label{corol.3.proof.6}
0\leqslant Z_{[\alpha,\beta_2)}(Q)\leqslant
Z_{[\alpha,\beta_2)}(Q_{1}).
\end{equation*}
From this inequality and the inequality~\eqref{corol.3.proof.4} we
again obtain~\eqref{main.work.formula.new}.
\end{itemize}

\noindent If $Q(z)\neq0$ in the interval~$(\alpha,\beta_2)$, then
the~inequality~\eqref{main.work.formula.new} can be proved
analogously.
\end{proof}

\begin{note}\label{remark.2.6}
Theorem~\ref{corol.new} remains valid if one of $\beta_1$ and
$\beta_2$ is infinite rather than a zero of $\varphi'$, that is,
if $(\beta_1,\beta_2)=(\beta_1,+\infty)$ or
$(\beta_1,\beta_2)=(-\infty,\beta_2)$ and the numbers
$Z_{(\beta_1,\beta_2)}(Q)$ and $Z_{(\beta_1,\beta_2)}(Q_1)$ are of
equal parities. More general, if, say, $\beta_2=+\infty$, then the
inequalities~\eqref{main.work.formula.new} must turn to the
following ones
\begin{equation}\label{main.work.formula.new.new}
0\leqslant Z_{(\beta_1,+\infty)}(Q)\leqslant1+
Z_{(\beta_1,+\infty)}(Q_{1}).
\end{equation}

However, for functions in $U_0^*=\mathcal{L-P^*}$ the
inequalities~\eqref{main.work.formula.new} are valid for
half-infinite intervals, since both functions $Q(z)$ and $Q_1(z)$
are negative for all sufficiently large real~$z$ (see the proof of
Theorem~\ref{Theorem.finitude.number.of.zeroes.of.Q} and
Proposition~\ref{prop.diff.U_2n*.function}), and therefore, the
numbers $Z_{(\beta_1,\beta_2)}(Q)$ and
$Z_{(\beta_1,\beta_2)}(Q_1)$ are of equal parities whether
$\beta_i$'s are finite or infinite (see also
Section~\ref{section:main.results.for.U_2n*}).
\end{note}
\begin{note}\label{remark.multiple.zeroes}
If $\alpha$ is a multiple zero of $\varphi\in U_{2n}^*$ and
$\beta_1$ and $\beta_2$, where $\beta_1<\alpha<\beta_2$, are
zeroes of~$\varphi'$ such that $\varphi(z)\neq0$ and
$\varphi'(z)\neq0$ for all
$z\in(\beta_1,\alpha)\cup(\alpha,\beta_2)$, then the
inequalities~\eqref{main.work.formula.new} hold, since we have
Case~I of Lemma~\ref{lemma.7} in both intervals $(\beta_1,\alpha)$
and $(\alpha,\beta_2)$ in this case.
\end{note}

\setcounter{equation}{0}

\section{Bounds on the number of real critical points of the logarithmic derivatives of functions in
$\mathcal{L-P^*}$}\label{section:infinite.intervals}

In this section, we study bounds on the number of real zeroes of
the function $Q[\varphi]$ associated with a function
$\varphi\in\mathcal{L-P^*}=U_0^*$ in detail.

In Section~\ref{subsection:half-infinite.intervals}, we
investigate bounds on the number of zeroes of the function $Q$ in
half-infinite intervals free of zeroes of the function $\varphi$.
Such intervals may appear if $\varphi$ has the smallest and/or the
largest zero.

Section~\ref{subsection:functions.at most.one.real.zero} is
devoted to the study of the number of real zeroes of $Q$
associated with functions in $\mathcal{L-P^*}$ whose derivatives
have no zeroes on the real axis. We establish some auxiliary
statements and then prove Theorems~\ref{corol.real.axis.4}
and~\ref{corol.real.exis.6}, which we use in
Section~\ref{section:general.theorems}.

\subsection{Half-infinite intervals free of zeroes of
$\varphi$}\label{subsection:half-infinite.intervals}

In Section~\ref{subsection.property.A}, we already established
that the function $Q[\varphi]$ has an even number of zeroes in the
interval $(\alpha_L,+\infty)$ where $\alpha_L$ is the largest zero
(if any) of the function $\varphi$ in $\mathcal{L-P^*}$. But if
additional information on the number of real zeroes of $\varphi'$
is available, then Lemma~\ref{lemma.1} and Lemma~\ref{lemma.2} can
be used to derive the following sharper result.
\begin{lemma}\label{corol.1}
Let $\varphi\in\mathcal{L-P^*}$ and suppose that $\varphi$ has the
largest zero $\alpha_L$ and $\varphi'$ has exactly $r\geqslant1$
extra zeroes, counting multiplicities, in the
interval~$(\alpha_L,+\infty)$. If $\beta_L$ is the largest zero of
$\varphi'$ in~$(\alpha_L,+\infty)$, then $Q$ has an odd
\emph{(}even\emph{)} number of real zeroes in $(\beta_L,+\infty)$
whenever~$r$~is an~even~\emph{(}odd\emph{)} number.
\end{lemma}
\begin{proof}
Let $\varphi'$ have $l\leqslant r$ \textit{distinct} zeroes, say
$\beta_1<\beta_2<\ldots<\beta_l=\beta_L$, in
the~interval~$(\alpha_L,+\infty)$. According to
Lemma~\ref{lemma.2}, $Q$ has an even number of real zeroes
in~$(\alpha_L,+\infty)$, counting multiplicities. But from
Lemma~\ref{lemma.1} it follows that $Q$ has an even number of real
zeroes in~$(\alpha_L,\beta_1)$, counting multiplicities, and an
odd number of real zeroes, say $2M_i+1$, in each of the intervals
$(\beta_i,\beta_{i+1})$ ($i=1,2,\ldots,l-1$). Hence, $Q$ has
exactly $\sum_{i=1}^{l-1}{(2M_i+1)}$ real zeroes, counting
multiplicities, in~$\bigcup_{i=1}^{l-1}{(\beta_i,\beta_{i+1})}$.

Moreover, from~\eqref{main.function.2} it follows that $\beta$ is
a zero of $Q$ of multiplicity~$M-1$ whenever~$\beta$ is a zero of
$\varphi'$ of multiplicity~$M$ and $\varphi(\beta)\neq0$.
Consequently, in our case, $Q$ has exactly $r-l$ real zeroes,
counting multiplicities, at the points $\beta_i$ that are multiple
zeroes of $\varphi'$. Thus, $Q$ has
$r-l+\sum_{i=1}^{l-1}{(2M_i+1)}=r-1+\sum_{i=1}^{l-1}{2M_i}$ real
zeroes, counting multiplicities, in the
interval~$[\beta_1,\beta_L]$. Therefore, if $r$ is an even (odd)
number, then $Q$ has an~odd~(even) number of real zeroes in
$(\alpha_L,\beta_L]$. Recall that $Q$ has an even number of zeroes
in~$(\alpha_L,\beta_1)$ by Lemma~\ref{lemma.1}. Consequently, $Q$
has an odd~(even) number of real zeroes in~$(\beta_L,+\infty)$,
since $Q$ has an even number of real zeroes in
$(\alpha_L,+\infty)$, according to~Lemma~\ref{lemma.2}.
\end{proof}

\begin{note}\label{remark.2.2} Lemma~\ref{corol.1} is valid with respective
modification in the case when $\varphi$ has the smallest
zero~$a_S$.
\end{note}

Lemmata~\ref{lemma.2} and~\ref{remark.2.2} together with results
of Section~\ref{section:finite.intervals} imply the following
theorem, which concerns half-infinite intervals adjacent to the
largest zero of $\varphi$.

\begin{theorem}\label{corol.4}
Let $\varphi\in\mathcal{L-P^*}$ and let $\alpha_L$ be the largest
zero~of~$\varphi$. If~$\varphi'$ has exactly $r\geqslant1$ zeroes
in the interval~$(\alpha_L,+\infty)$, counting multiplicities, and
$\beta_{S}$ is the minimal one, then
\begin{equation}\label{main.work.formula.6}
2\left[\dfrac{r}2\right]\leqslant
Z_{[\beta_{S},+\infty)}(Q)\leqslant
2\left[\dfrac{r}2\right]+Z_{[\beta_{S},+\infty)}(Q_{1}),
\end{equation}
where $\left[\dfrac r2\right]$ is the largest integer not
exceeding $\dfrac r2$.
\end{theorem}
\begin{proof}
Let
$\beta_S=\beta_1\leqslant\beta_2\leqslant\ldots\leqslant\beta_r$
be the zeroes of~$\varphi'$ in the interval~$(\alpha_L,+\infty)$.
Corollary~\ref{biggest.technical.theorem} gives
\begin{equation}\label{corol.4.intermediate.1}
r-1\leqslant Z_{[\beta_1,\beta_{r}]}(Q)\leqslant
r-1+Z_{[\beta_1,\beta_{r}]}(Q_{1}).
\end{equation}

We consider the following two cases.

\vspace{2mm}

\noindent Case I. The number $r$ is odd. There can be only the
following two situations:
\begin{itemize}
\item[I.1.] The function $\varphi''$ has an odd number of
zeroes, counting multiplicities, in the
interval~$(\beta_r,+\infty)$, say
$\gamma_1\leqslant\gamma_1\leqslant\gamma_2\leqslant\ldots\leqslant\gamma_{2M+1}$,
$M\geqslant0$.

From the analyticity of $\varphi$ and $\varphi'$ it follows that
\begin{equation}\label{corol.4.proof.1}
\varphi(\alpha_L+\varepsilon)\varphi'(\alpha_L+\varepsilon)>0
\end{equation}
for all sufficiently small $\varepsilon>0$, and
\begin{equation}\label{corol.4.proof.2}
\varphi'(\beta_r+\delta)\varphi''(\beta_r+\delta)>0
\end{equation}
for all sufficiently small $\delta>0$. Since $r$ is an odd number,
$\varphi'$ has different signs in the intervals
$(\alpha_L,\beta_1)$ and $(\beta_r,+\infty)$. Therefore,
from~\eqref{corol.4.proof.1} we obtain
\begin{equation}\label{corol.4.proof.3}
\varphi(\beta_r+\delta)\varphi'(\beta_r+\delta)<0
\end{equation}
for sufficiently small $\delta>0$, since $\varphi(z)\neq0$ for
$z\in(\alpha_L,+\infty)$. Hence, from~\eqref{corol.4.proof.2}
and~\eqref{corol.4.proof.3} it follows that
\begin{equation}\label{corol.4.proof.4}
\varphi(\beta_r+\varepsilon)\varphi''(\beta_r+\varepsilon)<0
\end{equation}
for sufficiently small~$\varepsilon>0$. This means that $Q(z)<0$
for $z\in(\beta_r,\gamma_1)$ by~\eqref{main.function.2}, so
\begin{equation}\label{corol.4.proof.4.5}
0=Z_{(\beta_r,\gamma_{1})}(Q)\leqslant
Z_{(\beta_r,\gamma_{1})}(Q_{1}).
\end{equation}
Here the upper bound cannot be improved by parity considerations, since $Q_1$ has an even
number of zeroes in the interval $(\beta_r,\gamma_{1})$ according
to Lemma~\ref{lemma.1} applied to $Q_1$. By Case~I of
Lemma~\ref{lemma.6.5}, we have
\begin{equation}\label{corol.4.intermediate.2}
0\leqslant Z_{[\gamma_{1},\gamma_{2M+1}]}(Q)\leqslant
Z_{[\gamma_{1},\gamma_{2M+1}]}(Q_{1}).
\end{equation}
But $\varphi''$ has different signs in the intervals
$(\beta_1,\gamma_1)$ and $(\gamma_{2M+1},+\infty)$, therefore,
by~\eqref{corol.4.proof.4},
\begin{equation*}
\varphi(\gamma_{2M+1}+\delta)\varphi''(\gamma_{2M+1}+\delta)>0
\end{equation*}
for sufficiently small $\delta>0$, so the
number~$Z_{(\gamma_{2M+1},+\infty)}(Q)$ can be positive according
to~\eqref{main.function.2}. Now we note that $Q$ has an even
number of zeroes in $(\beta_r,+\infty)$ and in
$[\gamma_{1},\gamma_{2M+1}]$ by Lemmata~\ref{corol.1}
and~\ref{lemma.6.5}. Since $Q$ has no zeroes in
$(\beta_r,\gamma_1)$, it has an even number of zeroes in the
interval~$(\gamma_{2M+1},+\infty)$. Lemma~\ref{lemma.5} and
Remark~\ref{remark.2.4} imply the validity of the
inequality~\eqref{main.work.formula.1} for
$(\gamma_{2M+1},+\infty)$:
\begin{equation}\label{corol.4.intermediate.3}
Z_{(\gamma_{2M+1},+\infty)}(Q)\leqslant
1+Z_{(\gamma_{2M+1},+\infty)}(Q_{1}).
\end{equation}
Applying Lemma~\ref{corol.1} to $Q_1$ on the interval
$(\gamma_{2M+1},+\infty)$, we obtain that $Q_{1}$ has an even
number of zeroes in~$(\gamma_{2M+1},+\infty)$ too. Consequently,
the inequality~\eqref{corol.4.intermediate.3} can be improved to
the following one
\begin{equation}\label{corol.4.intermediate.4}
0\leqslant Z_{(\gamma_{2M+1},+\infty)}(Q)\leqslant
Z_{(\gamma_{2M+1},+\infty)}(Q_{1}).
\end{equation}
Here the trivial lower bound cannot be improved by parity considerations, since
$Q$ has an even number of zeroes in~$(\gamma_{2M+1},+\infty)$. Now the
inequalities~\eqref{corol.4.intermediate.1},~\eqref{corol.4.proof.4.5},~\eqref{corol.4.intermediate.2}
and~\eqref{corol.4.intermediate.4} imply
\begin{equation}\label{corol.4.intermediate.5}
r-1\leqslant Z_{[\beta_S,+\infty)}(Q)\leqslant
r-1+Z_{[\beta_S,+\infty)}(Q_{1}),
\end{equation}
which is equivalent to~\eqref{main.work.formula.6}, since $r$ is
odd, so $2\left[\dfrac{r}2\right]=r-1$.

\item[I.2.] The function $\varphi''$ has an even number of
zeroes,  counting multiplicities, in the
interval~$(\beta_r,+\infty)$, say
$\gamma_1\leqslant\gamma_1\leqslant\gamma_2\leqslant\ldots\leqslant\gamma_{2M}$,
$M\geqslant0$.

\vspace{2mm}

Let $M=0$, that is, $\varphi''(z)\neq0$ for
$z\in(\beta_r,+\infty)$. Since $r$ is an odd number, the
inequality~\eqref{corol.4.proof.4} holds as we proved above.
Therefore,
\begin{equation*}\label{corol.4.proof.5}
0=Z_{(\beta_r,+\infty)}(Q)\leqslant Z_{(\beta_r,+\infty)}(Q_{1}).
\end{equation*}
Combined with~\eqref{corol.4.intermediate.1}, this inequality
implies~\eqref{corol.4.intermediate.5}, which is equivalent
to~\eqref{main.work.formula.6} as we explained above.

\vspace{2mm}

Let $M>0$, then~\eqref{corol.4.proof.4} and Case~III of
Lemma~\ref{lemma.6.5} imply
\begin{equation}\label{corol.4.proof.6}
0\leqslant
Z_{[\gamma_{1},\gamma_{2M}]}(Q)\leqslant1+Z_{[\gamma_{1},\gamma_{2M}]}(Q_{1}).
\end{equation}
As above, the inequality~\eqref{corol.4.proof.4}
implies~\eqref{corol.4.proof.4.5}. Moreover, from the fact that
$\varphi''$ has an even number of zeroes in $(\beta_r,+\infty)$ it
follows that $\varphi''$ has equal signs in the intervals
$(\beta_r,\gamma_1)$ and $(\gamma_{2M},+\infty)$. Then
by~\eqref{corol.4.proof.4}, we have the following inequality
\begin{equation}\label{corol.4.proof.7}
\varphi(\gamma_{2M}+\varepsilon)\varphi''(\gamma_{2M}+\varepsilon)<0
\end{equation}
for sufficiently small $\varepsilon$, so
$Z_{(\gamma_{2M},+\infty)}(Q)=0$ by~\eqref{main.function.2}.
Applying Lemma~\ref{corol.1} to $Q_1$, we obtain that $Q_{1}$ has
an odd number of zeroes in~$(\gamma_{2M},+\infty)$ (at least one).
Therefore, we have
\begin{equation}\label{corol.4.proof.7.8}
0=Z_{(\gamma_{2M},+\infty)}(Q)\leqslant
-1+Z_{(\gamma_{2M},+\infty)}(Q_{1}).
\end{equation}
Now
from~\eqref{corol.4.intermediate.1},~\eqref{corol.4.proof.4.5},~\eqref{corol.4.proof.6}
and~\eqref{corol.4.proof.7.8} we again obtain the
inequalities~\eqref{corol.4.intermediate.5}, which are equivalent
to~\eqref{main.work.formula.6} as above.
\end{itemize}

\vspace{2mm}

\noindent Case II. The number $r>0$ is even. There also can be
only two situations:
\begin{itemize}
\item[II.1.] The function~$\varphi''$ has an even number
of zeroes, counting multiplicities, in the
interval~$(\beta_r,+\infty)$, say
$\gamma_1\leqslant\gamma_1\leqslant\gamma_2\leqslant\ldots\leqslant\gamma_{2M}$,
$M\geqslant0$.

\vspace{2mm}

Let $M>0$. According to Lemma~\ref{corol.1}, $Q$ has an odd number
of zeroes in~$(\beta_r,+\infty)$. But $Q(z)$ is negative for
sufficiently large real $z$ as was shown in the proof of
Theorem~\ref{Theorem.finitude.number.of.zeroes.of.Q}, therefore,
\begin{equation}\label{corol.4.proof.8}
Q(\beta_r+\varepsilon)>0
\end{equation}
for sufficiently small $\varepsilon>0$.
From~\eqref{intermediate.10.lemma.6} it follows that
$Q(\gamma_1)<0$, consequently, $Q$ has an odd number of zeroes (at
least one) in the interval~$(\beta_r,\gamma_1)$. This fact and
Lemma~\ref{lemma.5} imply the following inequalities
\begin{equation}\label{corol.4.intermediate.6}
1\leqslant
Z_{(\beta_r,\gamma_1)}(Q)\leqslant1+Z_{(\beta_r,\gamma_1)}(Q_{1}).
\end{equation}
By Lemma~\ref{lemma.1} applied to $Q_{1}$, the latter has an even
number of zeroes in the interval~$(\beta_r,\gamma_1)$. Hence, the
inequalities~\eqref{corol.4.intermediate.6} cannot be improved by
parity considerations.

Since~$\varphi''$ has an even number of zeroes, counting
multiplicities, in the~interval~$(\beta_r,+\infty)$, $\varphi''$
has equal signs in the intervals~$(\beta_r,\gamma_1)$
and~$(\gamma_{2M},+\infty)$. But $Q$ has at least one zero
in~$(\beta_r,\gamma_1)$ (see~\eqref{corol.4.intermediate.6}),
therefore from~\eqref{main.function.2} it follows that
$\varphi(z)\varphi''(z)$ is positive in~$(\beta_r,\gamma_1)$.
Consequently, $\varphi(z)\varphi''(z)$ is also positive
in~$(\gamma_{2M},+\infty)$. Thus, by~\eqref{main.function.2}, the
number~$Z_{(\gamma_{2M},+\infty)}(Q)$ can be positive. Moreover,
since $Q(z)$ is negative for $z$ sufficiently close
to~$\gamma_{2M}$ (see~\eqref{intermediate.10.lemma.6}) and for
sufficiently large real $z$ (see the proof of
Theorem~\ref{Theorem.finitude.number.of.zeroes.of.Q}), $Q$~has an
even number of zeroes in~$(\gamma_{2M},+\infty)$.
Lemma~\ref{lemma.5} and Remark~\ref{remark.2.4} imply
\begin{equation}\label{corol.4.intermediate.6.5}
0\leqslant Z_{(\gamma_{2M},+\infty)}(Q)\leqslant
1+Z_{(\gamma_{2M},+\infty)}(Q_{1}).
\end{equation}
Here the upper bound cannot be improved by parity considerations, since $Q_{1}$ has an odd
number of zeroes in the interval $(\gamma_{2M},+\infty)$ by
Lemma~\ref{corol.1} applied to $Q_{1}$.

Since $\varphi(z)\varphi''(z)>0$ in~$(\beta_r,\gamma_1)$
(see~\eqref{corol.4.intermediate.6} and~\eqref{main.function.2}),
by Case~II of Lemma~\ref{lemma.6.5}, we have
\begin{equation}\label{corol.4.intermediate.20}
0\leqslant
Z_{[\gamma_{1},\gamma_{2M}]}(Q)\leqslant-1+Z_{[\gamma_{1},\gamma_{2M}]}(Q_{1}).
\end{equation}
Now from~\eqref{corol.4.intermediate.1}
and~\eqref{corol.4.intermediate.6}--\eqref{corol.4.intermediate.20}
we obtain the following inequalities
\begin{equation}\label{corol.4.intermediate.22}
r\leqslant Z_{(\beta_s,+\infty)}(Q)\leqslant
r+Z_{(\beta_s,+\infty)}(Q_{1}),
\end{equation}
which are equivalent to~\eqref{main.work.formula.6}, since $r$ is
even, so $2\left[\dfrac{r}2\right]=r$.

\vspace{2mm}

If $M=0$, that is, $\varphi''(z)\neq0$ in the interval
$(\beta_r,+\infty)$, then we have the inequalities
\begin{equation}\label{corol.4.intermediate.607}
1\leqslant
Z_{(\beta_r,+\infty)}(Q)\leqslant1+Z_{(\beta_r,+\infty)}(Q_{1}),
\end{equation}
where the upper bound follows from Lemma~\ref{lemma.5} and
Remark~\ref{remark.2.4}, and the lower bound follows
from~\eqref{corol.4.proof.8} and from the fact that $Q(z)<0$ for
sufficiently large real $z$.

By~\eqref{corol.4.intermediate.1}
and~\eqref{corol.4.intermediate.607}, we again
obtain~\eqref{corol.4.intermediate.22}, which is equivalent
to~\eqref{main.work.formula.6} as we showed above.

\item[II.2.] The function~$\varphi''$ has an odd number
of zeroes, counting multiplicities, in the
interval~$(\beta_r,+\infty)$, say
$\gamma_1\leqslant\gamma_1\leqslant\gamma_2\leqslant\ldots\leqslant\gamma_{2M+1}$,
$M\geqslant0$.

As in Case II.1, Lemma~\ref{corol.1} implies the
inequality~\eqref{corol.4.proof.8}, from which the
inequalities~\eqref{corol.4.intermediate.6} follow by
Lemma~\ref{lemma.5}. Furthermore, from Case~I of
Lemma~\ref{lemma.6.5} we obtain the
inequalities~\eqref{corol.4.intermediate.2}. Since $Q$ has at
least one zero in the interval $(\beta_r,\gamma_1)$
(see~\eqref{corol.4.intermediate.6}), $\varphi(z)\varphi''(z)>0$
in~$(\beta_r,\gamma_1)$ by~\eqref{main.function.2}. But
$\varphi''$ has an odd number of zeroes in $(\beta_r,+\infty)$ by
assumption, and $\varphi(z)\neq0$ in this interval, therefore,
$\varphi(z)\varphi''(z)<0$ in $(\gamma_{2M+1},+\infty)$ and
$Q$~has no zeroes in the interval $(\gamma_{2M+1},+\infty)$. This
fact implies the following inequality
\begin{equation*}\label{corol.4.proof.11}
0=Z_{(\gamma_{2M+1},+\infty)}(Q)\leqslant
Z_{(\gamma_{2M+1},+\infty)}(Q_{1}),
\end{equation*}
which, together with~\eqref{corol.4.intermediate.2},
and~\eqref{corol.4.intermediate.6}
implies~\eqref{corol.4.intermediate.607}. Now
by~\eqref{corol.4.intermediate.1}
and~\eqref{corol.4.intermediate.607}, we
obtain~\eqref{corol.4.intermediate.22}, which is equivalent
to~\eqref{main.work.formula.6} as we showed in Case~II.1.
\end{itemize}
\end{proof}
\begin{note}\label{remark.2.8}
Theorem~\ref{corol.4} remains valid with respective modification
in the case when $\varphi$ has the~smallest~zero~$a_S$.
\end{note}
\begin{note}\label{remark.2.9}
The number~$r$ in Theorem~\ref{corol.4} is the number of extra
zeroes of $\varphi'$ in~$(\alpha_L,+\infty)$.
\end{note}

\subsection{Functions in $\mathcal{L-P^*}$ whose first derivatives have no
real zeroes}\label{subsection:functions.at most.one.real.zero}

So far, we have considered finite and half-infinite intervals. Our
statements in this section address the entire~real~axis.

\begin{lemma}\label{corol.real.axis.1}
Let $\varphi\in\mathcal{L-P^*}$. If $\varphi'(z)\neq0$,
$\varphi''(z)\neq0$, $Q_{1}(z)\neq0$ for $z\in\mathbb{R}$, then
$Z_\mathbb{R}(Q)=0$, i.e. $Q$ has no real zeroes.
\end{lemma}
\begin{proof}
The function $\varphi$ may have real zeroes. But by Rolle's
theorem, $\varphi$ has at most one real zero, counting
multiplicity, since $\varphi'(z)\neq0$ for $z\in\mathbb{R}$ by
assumption.

If $\varphi(z)\neq0$ for $z\in\mathbb{R}$, then by
Lemma~\ref{lemma.3} applied on the real axis, $Q$ has at most one
real zero, counting multiplicity. But by Corollary~\ref{corol.3},
the number of real zeroes of $Q$ is even. Consequently,
$Z_\mathbb{R}(Q)=0$.

If $\varphi$ has one real zero, counting multiplicity, say
$\alpha$, then $\alpha$ is the unique real pole of $Q$. Moreover,
since $\varphi''(z)\neq0$ for $z\in\mathbb{R}$ by assumption, the
function $\varphi\varphi''$ changes its sign at~$\alpha$ and,
therefore, $\varphi(z)\varphi''(z)<0$ in one the intervals
$(-\infty,\alpha)$ and $(\alpha,+\infty)$. Consequently,
by~\eqref{main.function.2}, $Q(z)\neq0$ in one of the intervals
$(-\infty,\alpha)$ and $(\alpha,+\infty)$. Without loss of
generality, we may suppose that $Z_{(-\infty,\alpha]}(Q)=0$. Then
according to Lemma~\ref{lemma.3} and
Remark~\ref{remark.half.inf.1}, we obtain that $Q$ has at most one
zero, counting multiplicity, in the interval $(\alpha,+\infty)$.
But the number of real zeroes of $Q$ is even by
Corollary~\ref{corol.3}, and $Q$ has no zeroes in the
interval~$(-\infty,\alpha]$. Therefore, $Q$ cannot have zeroes in
the interval~$(\alpha,+\infty)$, so $Z_\mathbb{R}(Q)=0$, as
required.
\end{proof}

\begin{lemma}\label{corol.real.axis.2}
Let $\varphi\in\mathcal{L-P^*}$ and let $\varphi'(z)\neq0$,
$\varphi''(z)\neq0$ for $z\in\mathbb{R}$. If $Q_1$ has only one
real zero, then~$Q$ has no real zeroes, i.e. $Z_\mathbb{R}(Q)=0$.
\end{lemma}
\begin{proof}
As in Lemma~\ref{corol.real.axis.1}, we note that $\varphi$ has at
most one real zero, counting multiplicity, by Rolle's theorem.

\vspace{2mm}

\noindent Case I. Let $\varphi(z)\neq0$ for $z\in\mathbb{R}$ and
let $\xi$ be a unique real zero of $Q_1$. By
Corollary~\ref{corol.3} (see Remark~\ref{remark.2.3}), $\xi$ is a
zero of $Q_1$ of even multiplicity $2M$. In this case, the number
$\xi$ cannot be zero of $Q$. In fact, if $Q(\xi)=0$, then, by
Lemma~\ref{lemma.4} applied on the real axis, $\xi$ is a unique
real zero of $Q$ of multiplicity $2M+1$, that is,
$Z_{\{\xi\}}(Q)=Z_\mathbb{R}(Q)=2M+1$. This contradicts
Corollary~\ref{corol.3}, so~$Q(\xi)\neq0$.

Since $\xi$ is a unique real zero of $Q_1$ of even multiplicity,
$Q_1$ has equal signs in the intervals~$(-\infty,\xi)$
and~$(\xi,+\infty)$. By Lemma~\ref{lemma.3} applied to these
intervals, $Q$ can have at most one zero, counting multiplicity,
in each of the intervals~$(-\infty,\xi)$ and~$(\xi,+\infty)$. But
if $Q$ has a zero in the interval~$(-\infty,\xi)$, then
$Q(z)\neq0$ for~$z\in(\xi,+\infty)$. In fact, if
$\zeta\in(-\infty,\xi)$ is a zero of $Q$,
then~\eqref{lemma.3.condition.2} must hold on $(-\infty,\zeta)$,
and $\zeta$ is a~simple zero of $Q$ by Lemma~\ref{lemma.3}.
Moreover, $Q(z)\neq0$ for $z\in(\zeta,\xi]$, since (see the proof
of Lemma~\ref{lemma.3}), for all sufficiently small $\delta>0$, we
have
\begin{equation}\label{corol.real.axis.2.proof.1}
\varphi'(\zeta+\delta)\varphi''(\zeta+\delta)Q(\zeta+\delta)Q_{1}(\zeta+\delta)>0.
\end{equation}
But the functions $\varphi'$, $\varphi''$, $Q$ and $Q_1$ do not
change their signs at $\xi$, therefore, we have the
inequality~\eqref{corol.real.axis.2.proof.1} in a small
right-sided neighbourhood of $\xi$. Consequently, $Q(z)\neq0$
in~$(\xi,+\infty)$ by Lemma~\ref{lemma.3}.

Thus, if $Q$ has real zeroes, then it has at most one zero,
counting multiplicity, in one of the intervals~$(-\infty,\xi)$
and~$(\xi,+\infty)$, that is, $Z_\mathbb{R}(Q)\leqslant1$. Now
Corollary~\ref{corol.3} implies $Z_\mathbb{R}(Q)=0$.

\vspace{2mm}

\noindent Case II. If $\varphi$ has one real zero, counting
multiplicity, say $\alpha$, then $\alpha$ is the unique real pole of
$Q$. In this case, as in the proof of
Lemma~\ref{corol.real.axis.1}, one can show that $Q(z)\neq0$ in
one of the intervals $(-\infty,\alpha)$ and $(\alpha,+\infty)$.
Without loss of generality, we may assume that $Q(z)\neq0$ for
$z\in(-\infty,\alpha]$, that is, $Z_{(-\infty,\alpha]}(Q)=0$. Then
by Corollary~\ref{corol.3}, the number $Z_{(\alpha,+\infty)}(Q)$
is even.

Let $\xi$ be the unique real zero of $Q_1$ and
$\xi\in(-\infty,\alpha]$. Recall that
$Z_{\{\xi\}}(Q_1)=Z_{\mathbb{R}}(Q_1)=2M$, $M\geqslant1$,
according to Corollary~\ref{corol.3}. By Lemma~\ref{lemma.3} (see
Remark~\ref{remark.half.inf.1}), $Q$~has at most one zero,
counting multiplicity, in~$(\alpha,+\infty)$. Since
$Z_{(\alpha,+\infty)}(Q)$ is an even number, we have
$Z_\mathbb{R}(Q)=0$.

If $\xi\in(\alpha,+\infty)$, then by the same argument as in
Case~I, one can show that $Q(\xi)\neq0$. Furthermore, by
Lemma~\ref{lemma.3} applied to the intervals~$(\alpha,\xi)$
and~$(\xi,+\infty)$, $Q$ can have at most one zero, counting
multiplicity, in each of these intervals. In the same way as in
Case~I, one can show that $Q(z)\neq0$ for $z\in(\xi,+\infty)$ if
$Q$ has a zero on the interval $(\alpha,\xi)$. So
$Z_{(\alpha,+\infty)}(Q)\leqslant1$, therefore,
$Z_\mathbb{R}(Q)\leqslant1$, since $Z_{(-\infty,\alpha]}(Q)=0$ by
assumption. Now Corollary~\ref{corol.3} again implies
$Z_\mathbb{R}(Q)=0$.
\end{proof}

We are now in a position to establish a relation between the
number of real zeroes of $Q$ and $Q_1$ on the real line analogous
to~\eqref{main.work.formula.1}.

\begin{theorem}\label{corol.real.axis.3}
Let $\varphi\in\mathcal{L-P^*}$. If $\varphi'(z)\neq0$,
$\varphi''(z)\neq0$ for $z\in\mathbb{R}$, then
\begin{equation}\label{corol.real.axis.4.main.formula}
0\leqslant Z_\mathbb{R}(Q)\leqslant Z_\mathbb{R}(Q_{1}).
\end{equation}
\end{theorem}
\begin{proof}
As in Lemma~\ref{corol.real.axis.1}, we note that $\varphi$ has at
most one real zero, counting multiplicity, by Rolle's theorem.

\vspace{2mm}

\noindent Case I. Let $\varphi(z)\neq0$ for $z\in\mathbb{R}$. Then
we can apply Lemma~\ref{lemma.5} (see also
Lemmata~\ref{corol.real.axis.1} and~\ref{corol.real.axis.2}) on
the real axis to obtain
\begin{equation}\label{corol.real.axis.3.proof.1}
0\leqslant Z_\mathbb{R}(Q)\leqslant1+Z_\mathbb{R}(Q_{1}),
\end{equation}
that is equivalent to~\eqref{corol.real.axis.4.main.formula},
since the numbers $Z_\mathbb{R}(Q)$ and $Z_\mathbb{R}(Q_1)$ are
both even by Corollary~\ref{corol.3} and Remark~\ref{remark.2.3}.

\vspace{2mm}

\noindent Case II. If $\varphi$ has one real zero, counting
multiplicity, say $\alpha$, then $\alpha$ is a unique pole of $Q$.
In this case, as in the proof of Lemma~\ref{corol.real.axis.1},
one can show that $Q(z)\neq0$ in one of the intervals
$(-\infty,\alpha)$ and $(\alpha,+\infty)$. Without loss of
generality, we assume that $Q(z)\neq0$ for $z\in(-\infty,\alpha]$.
Then by Corollary~\ref{corol.3}, the number
$Z_{(\alpha,+\infty)}(Q)$ is even and the following inequality
holds
\begin{equation}\label{corol.real.axis.3.proof.2}
0=Z_{(-\infty,\alpha]}(Q)\leqslant Z_{(-\infty,\alpha]}(Q_{1}).
\end{equation}
By Lemma~\ref{lemma.5} (see
Remarks~\ref{remark.half.inf.1},~\ref{remark.2.4.new}
and~\ref{remark.2.4}), we obtain
\begin{equation}\label{corol.real.axis.3.proof.3}
0\leqslant
Z_{(\alpha,+\infty)}(Q)\leqslant1+Z_{(\alpha,+\infty)}(Q_{1}).
\end{equation}
The
inequalities~\eqref{corol.real.axis.3.proof.2}--\eqref{corol.real.axis.3.proof.3}
imply~\eqref{corol.real.axis.3.proof.1}, which is equivalent
to~\eqref{corol.real.axis.4.main.formula} as we showed above.
\end{proof}

We now address the case when $\varphi$ and $\varphi'$ have no real
zeroes.
\begin{theorem}\label{corol.real.axis.4}
Let $\varphi\in\mathcal{L-P^*}$ and let $\varphi(z)\neq0$,
$\varphi'(z)\neq0$ for $z\in\mathbb{R}$. Then the
inequalities~\eqref{corol.real.axis.4.main.formula} hold for
$\varphi$.
\end{theorem}
\begin{proof} At first, we note that $\varphi''$ cannot
have an odd number of real zeroes, counting multiplicities, if
$\varphi(z)\neq0$ for $z\in\mathbb{R}$. In fact, since
$\varphi\in\mathcal{L-P^*}$ has no real zeroes by assumption,
$\varphi(z)=e^{q(z)}p(z)$, where $q$ is a real polynomial of
degree at most two and $p$ is a real polynomial with no real
zeroes. Therefore, $\deg p$ is even. Since $\varphi''$ has exactly
$\deg p+2\deg q-2$ zeroes, $\varphi''$ has an even number of real
zeroes, counting multiplicities, or has no real zeroes.

\vspace{2mm}

If $\varphi''(z)\neq0$ for $z\in\mathbb{R}$, then the
inequalities~\eqref{corol.real.axis.4.main.formula} follow from
Theorem~\ref{corol.real.axis.3}.

\vspace{2mm}

\noindent Let $\varphi''$ have an even number of real zeroes,
counting multiplicities, say
$\gamma_1\leqslant\gamma_2\leqslant\ldots\leqslant\gamma_{2r}$,
$r\geqslant1$. If $\gamma_1=\gamma_{2r}$, then $\gamma_1$ is a
unique real zero of $\varphi''$ and its multiplicity is even.
Consequently, $\gamma_1$ is a zero of $Q_1$, according
to~\eqref{all.main.functions}. Therefore, the
inequality~\eqref{intermediate.10.lemma.6} implies
\begin{equation}\label{corol.real.axis.4.proof.5}
0=Z_{\{\gamma_1\}}(Q)\leqslant-1+Z_{\{\gamma_1\}}(Q_{1}).
\end{equation}
Since $Q(z)$ is negative when $z=\gamma_1$
(see~\eqref{intermediate.10.lemma.6}) and when $z\to\pm\infty$,
$Q$ has an even number~of~zeroes in each of the
intervals~$(-\infty,\gamma_1)$ and~$(\gamma_{1},+\infty)$. Then by
Lemma~\ref{lemma.5} and Remark~\ref{remark.2.4}, the following
inequalities hold
\begin{equation}\label{corol.real.axis.4.proof.6}
0\leqslant Z_{(-\infty,\gamma_1)}(Q)\leqslant1+
Z_{(-\infty,\gamma_1)}(Q_{1}),
\end{equation}
\begin{equation*}\label{corol.real.axis.4.proof.7}
0\leqslant Z_{(\gamma_1,+\infty)}(Q)\leqslant1+
Z_{(\gamma_1,+\infty)}(Q_{1}).
\end{equation*}
Together with~\eqref{corol.real.axis.4.proof.5}, these
inequalities imply~\eqref{corol.real.axis.3.proof.1}, which is
equivalent to~\eqref{corol.real.axis.4.main.formula}, since
the~numbers $Z_\mathbb{R}(Q)$ and $Z_\mathbb{R}(Q_1)$ are both
even by Corollary~\ref{corol.3} and Remark~\ref{remark.2.3}.

\vspace{2mm}

Let $\gamma_1<\gamma_{2r}$ and let $\varphi(z)\varphi''(z)>0$ for
$z\in(-\infty,\gamma_1)$. Then $\varphi(z)\varphi''(z)>0$ for
$z\in(\gamma_{2r},+\infty)$, since $\varphi''$ has equal signs in
the intervals~$(-\infty,\gamma_1)$ and~$(\gamma_{2r},+\infty)$ and
$\varphi(z)\neq0$ for $z\in\mathbb{R}$ by assumption. By
Lemma~\ref{lemma.5} and Remark~\ref{remark.2.4}, the
inequalities~\eqref{corol.real.axis.4.proof.6} hold and
\begin{equation}\label{corol.real.axis.4.proof.8}
0\leqslant Z_{(\gamma_{2r},+\infty)}(Q)\leqslant1+
Z_{(\gamma_{2r},+\infty)}(Q_{1}).
\end{equation}
Case~II of Lemma~\ref{lemma.6.5} implies
\begin{equation}\label{corol.real.axis.4.proof.9}
0\leqslant
Z_{[\gamma_1,\gamma_{2r}]}(Q)\leqslant-1+Z_{[\gamma_1,\gamma_{2r}]}(Q_{1}).
\end{equation}
Now
from~\eqref{corol.real.axis.4.proof.6}--\eqref{corol.real.axis.4.proof.9}
we obtain the inequalities~\eqref{corol.real.axis.3.proof.1},
which are equivalent to~\eqref{corol.real.axis.4.main.formula} as
we mentioned above.

\vspace{2mm}

If $\gamma_1<\gamma_{2r}$ and $\varphi(z)\varphi''(z)<0$ for
$z\in(-\infty,\gamma_1)$, then $\varphi(z)\varphi''(z)<0$ for
$z\in(\gamma_{2r},+\infty)$. Therefore,
by~\eqref{main.function.2}, the following inequalities hold
\begin{equation}\label{corol.real.axis.4.proof.10}
0=Z_{(-\infty,\gamma_1)}(Q)\leqslant
Z_{(-\infty,\gamma_1)}(Q_{1}),
\end{equation}
\begin{equation}\label{corol.real.axis.4.proof.11}
0=Z_{(\gamma_{2r},+\infty)}(Q)\leqslant
Z_{(\gamma_{2r},+\infty)}(Q_{1}).
\end{equation}
From Case~III of Lemma~\ref{lemma.6.5} it follows that the
inequalities~\eqref{lemma6.5.main.1} hold in the
interval~$[\gamma_1,\gamma_{2r}]$ and together
with~\eqref{corol.real.axis.4.proof.10}--\eqref{corol.real.axis.4.proof.11},
they imply~\eqref{corol.real.axis.3.proof.1}, which is equivalent
to~\eqref{corol.real.axis.4.main.formula}.
\end{proof}

At last, we examine the case when $\varphi$ has a unique real zero
$\alpha$ and possesses \textit{property~A} at $\alpha$.
\begin{theorem}\label{corol.real.exis.6}
Let $\varphi\in\mathcal{L-P^*}$ have a unique real zero $\alpha$
such that $\varphi'(z)\neq0$ for
$z\in\mathbb{R}\backslash\{\alpha\}$. If~$\varphi$ possesses
property A at $\alpha$, then the
inequalities~\eqref{corol.real.axis.4.main.formula} hold.
\end{theorem}
\begin{proof}
Since $\varphi$ possesses \textit{property~A}, $Q$ associated with
$\varphi$ has no zeroes in at least one of the intervals
$(-\infty,\alpha)$ and $(\alpha,+\infty)$. Without loss of
generality, we may assume that $Q(z)\neq0$ for
$z\in(-\infty,\alpha)$. Then we have
\begin{equation}\label{corol.real.exis.6.proof.1}
0=Z_{(-\infty,\alpha]}(Q)\leqslant Z_{(-\infty,\alpha]}(Q_{1}),
\end{equation}
since $Q(\alpha)\neq0$. From Lemma~\ref{lemma.7} and
Remark~\ref{remark.2.5} it follows that
\begin{equation}\label{corol.real.exis.6.proof.2}
0\leqslant
Z_{(\alpha,+\infty)}(Q)\leqslant1+Z_{(\alpha,+\infty)}(Q_{1}).
\end{equation}
The
inequalities~\eqref{corol.real.exis.6.proof.1}--\eqref{corol.real.exis.6.proof.2}
imply~\eqref{corol.real.axis.3.proof.1}, which is equivalent
to~\eqref{corol.real.axis.4.main.formula}, since $Z_\mathbb{R}(Q)$
and $Z_\mathbb{R}(Q_1)$ are even by Corollary~\ref{corol.3} and
Remark~\ref{remark.2.3}.
\end{proof}


\subsection{The number of real zeroes of the functions $Q$ and
$Q_1$}\label{section:general.theorems}

In this section, we establish bounds on the number of real zeroes
of $Q$ associated with a function $\varphi$ in~$\mathcal{L-P}^*$
in terms of the number of real zeroes of the function $Q_1$. We
consider separately three types of functions in $\mathcal{L-P}^*$
with finitely many real zeroes and also the functions with
infinitely many real zeroes. The main idea to establish the
mentioned bounds is to consider local estimates. That is, the real
zeroes $\alpha_j$ of a given function $\varphi\in\mathcal{L-P}^*$
and the real zeroes $\beta_j$ of its derivative $\varphi'$ split
the real axis into a few types of finite and half-infinite
intervals, so we can use results of
Sections~\ref{subsection:finite.intervals.2}
and~\ref{subsection:half-infinite.intervals} on those intervals.
In the case when $\varphi$ has no real zeroes, we use results of
Section~\ref{subsection:functions.at most.one.real.zero}.

So, if the function $\varphi$ has infinitely many positive and
negative zeroes, then its real zeroes and the real zeroes of its
derivative split the real axis into only the two type intervals
appeared in Corollary~\ref{biggest.technical.theorem} and
Theorem~\ref{corol.new}. If $\varphi$ has infinitely many negative
zeroes but finitely many positive ones, then we additionally
estimate the number of zeroes of $Q[\varphi]$ on half-infinite
intervals using Theorem~\ref{corol.4}.

Likewise, if the function $\varphi$ has finitely many real zeroes,
then its real zeroes (if any) and the real zeroes of its
derivative split the real axis into the three types of intervals
appeared in Corollary~\ref{biggest.technical.theorem} and
Theorems~\ref{corol.new} and~\ref{corol.4}.

\begin{example}
\begin{figure}[ht]
\centering \includegraphics{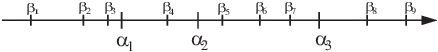} \caption{} \label{pic.3}
\end{figure}
Let the function $\varphi$ have the form $\varphi(z)=e^{\lambda
z}p(z)$, where $\lambda>0$ and $p$ is a real polynomial. Suppose
that the real zeroes of $\varphi$ and zeroes of its derivative
$\varphi'$ are located as on Figure~\ref{pic.3}. Then we can apply
Theorem~\ref{corol.new} to the intervals $(\beta_3,\beta_4)$,
$(\beta_4,\beta_5)$ and $(\beta_7,\beta_8)$ if $\varphi$ possesses
\textit{property~A}. Applying Theorem~\ref{corol.4} to the
intervals $(-\infty,\beta_1)$ and $(\beta_8,+\infty)$ and applying
Corollary~\ref{biggest.technical.theorem} to the
interval~$[\beta_5,\beta_7]$, we obtain

\begin{equation}\label{example.qqqq}
6\leqslant
Z_\mathbb{R}\left(Q\right)\leqslant6+Z_\mathbb{R}\left(Q_{1}\right).
\end{equation}
Here we take into account that
$Z_{\{\beta_4\}}\left(Q\right)=Z_{\{\beta_4\}}\left(Q\right)=0$.
Since  $E(\varphi')=7$ in this case, we have
$6=2\left[\frac{E(\varphi')}2\right]$. Besides, by
Theorem~\ref{Theorem.number.of.extra.zeroes.type.I}, we have
\begin{equation}\label{example.qqqq.2}
E(\varphi')=Z_{\mathbb{C}}(\varphi)-Z_{\mathbb{C}}(\varphi'),
\end{equation}
so from~\eqref{example.qqqq}--\eqref{example.qqqq.2} we obtain
\begin{equation}\label{example.qqqq.3}
Z_{\mathbb{C}}(\varphi)-Z_{\mathbb{C}}(\varphi')\leqslant
Z_\mathbb{R}\left(Q\right)\leqslant
Z_{\mathbb{C}}(\varphi)-Z_{\mathbb{C}}(\varphi')+Z_\mathbb{R}\left(Q_{1}\right).
\end{equation}
\end{example}

Thus, the main idea to estimate the number $Z_R(Q)$ is described
briefly. Now we give the full proof of the
inequalities~\eqref{example.qqqq.3} for all functions in
$\mathcal{L-P^*}$.

\vspace{3mm}

At first, we establish bounds on the number of real zeroes of $Q$
associated with a function~$\varphi$ in~$\mathcal{L-P}^*$ with
finitely many real zeroes. We express those bounds in terms of the
number of extra zeroes of $\varphi'$.

The following theorem concerns real polynomials multiplied by
exponentials.
\begin{theorem}\label{Th.main.theorem.for.finite.zeroes.function.1}
Let $p$ be a real polynomial and let $\varphi$ be the function
\begin{equation}\label{finite.zeroes.function.1}
\varphi(z)=e^{bz}p(z),
\end{equation}
where $b\neq0$ is a real number. If $\varphi$ possesses
property~A, then
\begin{equation}\label{finite.zeroes.function.1.condition}
2\left[\dfrac{E(\varphi')}2\right]\leqslant
Z_\mathbb{R}\left(Q\right)\leqslant2\left[\dfrac{E(\varphi')}2\right]+Z_\mathbb{R}\left(Q_{1}\right).
\end{equation}
Here $E(\varphi')$ is the number of extra zeroes of $\varphi'$.
\end{theorem}
\begin{proof} Without loss of generality, we assume that $b>0$ and
the leading coefficient of $p$ is also positive. Then
$\varphi(z)\to+\infty$ whenever $z\to+\infty$ and $\varphi(z)\to0$
whenever $z\to-\infty$.

\vspace{1mm}

We consider the following two cases: I. $p$ has no real zeroes,
and II. $p$ has at least one real zero.

\vspace{2mm}

\noindent Case I. Let $p(z)\neq0$ for $z\in\mathbb{R}$, then $\deg
p$ is even and $\varphi'(z)=e^{bz}[p'(z)+bp(z)]$. Thus, $\varphi'$
has no real zeroes or it has an even number of real zeroes, all of
which are extra zeroes of $\varphi'$. (This also follows from
Theorem~\ref{Theorem.number.of.extra.zeroes.type.I} with $n=0$.)

\vspace{1mm}

If $\varphi'(z)\neq0$ for $z\in\mathbb{R}$, then $E(\varphi')=0$,
and by Theorem~\ref{corol.real.axis.4}, we obtain the validity
of~\eqref{corol.real.axis.4.main.formula}, which is equivalent
to~\eqref{finite.zeroes.function.1.condition} in this case.

\vspace{1mm}

Let $\varphi'$ have an even number of real zeroes, say
$\beta_1\leqslant\beta_2\leqslant\ldots\leqslant\beta_{2r}$, where
$r>0$, then $E(\varphi')=2r$. Since $b>0$ and $\varphi(z)>0$ for
$z\in\mathbb{R}$ by assumption, $\varphi'(z)\to+\infty$ whenever
$z\to+\infty$ and ~$\varphi'(z)\to+\,0$ whenever $z\to-\infty$. In
particular, we have
\begin{equation}\label{Th.main.theorem.for.finite.zeroes.function.1.proof.0.1}
\varphi'(z)>0\quad\text{for}\quad
z\in(-\infty,\beta_1)\cup(\beta_{2r},+\infty).
\end{equation}
Also~$\varphi''(z)\to+\infty$ whenever $z\to+\infty$
and~$\varphi''(z)\to+\,0$ whenever $z\to-\infty$. By the
analyticity of~$\varphi'$, the following inequalities hold
\begin{equation}\label{Th.main.theorem.for.finite.zeroes.function.1.proof.1}
\varphi'(\beta_1-\varepsilon)\varphi''(\beta_1-\varepsilon)<0,
\end{equation}
\begin{equation}\label{Th.main.theorem.for.finite.zeroes.function.1.proof.2}
\varphi'(\beta_{2r}+\delta)\varphi''(\beta_{2r}+\delta)>0
\end{equation}
for all sufficiently small~$\varepsilon>0$ and $\delta>0$
(see~\eqref{corol.4.proof.2}). These inequalities
with~\eqref{Th.main.theorem.for.finite.zeroes.function.1.proof.0.1}
imply $\varphi''(\beta_1-\varepsilon)<0$ and
$\varphi''(\beta_{2r}+\delta)>0$ for all sufficiently small
positive $\varepsilon$ and $\delta$. Therefore, $\varphi''$ has an
odd number of zeroes in the interval $(-\infty,\beta_1)$ and an
even number of zeroes (or no zeroes) in the interval
$(\beta_{2r},+\infty)$. Moreover, $Q(\beta_1-\varepsilon)<0$
and~$Q(\beta_{2r}+\delta)>0$ for all sufficiently small~$\delta>0$
and~$\varepsilon>0$, since the sign of~$Q$ in a~small vicinity of
a real zero of~$\varphi'$ equals the sign of $\varphi\varphi''$ in
that vicinity (see~\eqref{main.function.2}) and since
$\varphi(z)>0$ for $z\in\mathbb{R}$ by assumption. As was
mentioned in Theorem~\ref{Theorem.finitude.number.of.zeroes.of.Q},
$Q(z)$ is negative for sufficiently large real $z$, consequently,
$Q$ has an even number of zeroes (possibly no zeroes) in the
interval~$(-\infty,\beta_1)$ and an odd number of zeroes
in~$(\beta_{2r},+\infty)$. Thus,
\begin{equation}\label{Th.main.theorem.for.finite.zeroes.function.1.proof.3}
Z_{(\beta_{2r},+\infty)}(Q)\geqslant1.
\end{equation}

From Corollary~\ref{biggest.technical.theorem} it follows that
\begin{equation}\label{Th.main.theorem.for.finite.zeroes.function.1.proof.4}
2r-1\leqslant Z_{[\beta_{1},\beta_{2r}]}(Q)\leqslant
2r-1+Z_{[\beta_{1},\beta_{2r}]}(Q_{1}).
\end{equation}

If $\varphi''$ has zeroes in~$(\beta_{2r},+\infty)$, say
$\gamma^{(+)}_1\leqslant\gamma^{(+)}_2\leqslant\ldots\leqslant\gamma^{(+)}_{2M}$,
$M>0$, then,
by~\eqref{Th.main.theorem.for.finite.zeroes.function.1.proof.2}
and by Case~II of Lemma~\ref{lemma.6.5}, we have
\begin{equation}\label{Th.main.theorem.for.finite.zeroes.function.1.proof.5}
0\leqslant Z_{[\gamma^{(+)}_{1},\gamma^{(+)}_{2M}]}(Q)\leqslant
-1+Z_{[\gamma^{(+)}_{1},\gamma^{(+)}_{2M}]}(Q_{1}).
\end{equation}
Lemma~\ref{lemma.5} and Remark~\ref{remark.2.4} yield
\begin{equation}\label{Th.main.theorem.for.finite.zeroes.function.1.proof.6}
Z_{(\gamma^{(+)}_{2M},+\infty)}(Q)\leqslant
1+Z_{(\gamma^{(+)}_{2M},+\infty)}(Q_{1}),
\end{equation}
\begin{equation}\label{Th.main.theorem.for.finite.zeroes.function.1.proof.7}
Z_{(\beta_{2r},\gamma^{(+)}_{1})}(Q)\leqslant
1+Z_{(\beta_{2r},\gamma^{(+)}_{1})}(Q_{1}).
\end{equation}
Then
from~\eqref{Th.main.theorem.for.finite.zeroes.function.1.proof.3}
and~\eqref{Th.main.theorem.for.finite.zeroes.function.1.proof.5}--\eqref{Th.main.theorem.for.finite.zeroes.function.1.proof.7}
it follows that
\begin{equation}\label{Th.main.theorem.for.finite.zeroes.function.1.proof.8}
1\leqslant Z_{(\beta_{2r},+\infty)}(Q)\leqslant
1+Z_{(\beta_{2r},+\infty)}(Q_{1}).
\end{equation}
These inequalities cannot be improved by parity considerations, since
$Z_{(\beta_{2r},+\infty)}(Q_{1})$ is even by Lemma~\ref{lemma.2}
applied to $Q_1$.

If $\varphi''(z)\neq0$ for $z\in(\beta_{2r},+\infty)$, then, by
Lemma~\ref{lemma.5} with Remark~\ref{remark.2.4} and
by~\eqref{Th.main.theorem.for.finite.zeroes.function.1.proof.3},
we obtain the
inequalities~\eqref{Th.main.theorem.for.finite.zeroes.function.1.proof.8}
again.

Since $\varphi''$ has an odd number of zeroes
in~$(-\infty,\beta_1)$, say
$\gamma^{(-)}_1\leqslant\gamma^{(-)}_2\leqslant\ldots\leqslant\gamma^{(-)}_{2N+1}$,
$N\geqslant0$, and $\varphi''(z)<0$ in
$(\gamma^{(-)}_{2N+1},\beta_1)$ as we showed above,
$\varphi''(z)>0$ in $(-\infty,\gamma^{(-)}_{1})$. Then we have
\begin{equation}\label{Th.main.theorem.for.finite.zeroes.function.1.proof.9}
0\leqslant Z_{(-\infty,\gamma^{(-)}_1)}(Q)\leqslant
Z_{(-\infty,\gamma^{(-)}_1)}(Q_{1}),
\end{equation}
\begin{equation}\label{Th.main.theorem.for.finite.zeroes.function.1.proof.10}
0\leqslant Z_{[\gamma^{(-)}_{1},\gamma^{(-)}_{2N+1}]}(Q)\leqslant
Z_{[\gamma^{(-)}_{1},\gamma^{(-)}_{2N+1}]}(Q_{1}),
\end{equation}
\begin{equation}\label{Th.main.theorem.for.finite.zeroes.function.1.proof.11}
0=Z_{(\gamma^{(-)}_{2N+1},\beta_{1})}(Q)\leqslant
Z_{(\gamma^{(-)}_{2N+1},\beta_{1})}(Q_{1}).
\end{equation}
Indeed, the
inequalities~\eqref{Th.main.theorem.for.finite.zeroes.function.1.proof.9}
follow from Lemma~\ref{lemma.5} with Remark~\ref{remark.2.4} and
the fact that $Q_1$ has an even number of zeroes in
$(-\infty,\gamma^{(-)}_{1})$ by Lemma~\ref{corol.1}
applied~to~$Q_1$. At the same time, the lower bound of the
inequalities~\eqref{Th.main.theorem.for.finite.zeroes.function.1.proof.9}
cannot be improved by parity considerations, since $Q$ has an even number of zeroes in the
interval $(-\infty,\gamma^{(-)}_1)$
by~\eqref{intermediate.10.lemma.6} and by the fact that $Q(z)<0$
for sufficiently large real~$z$. Further, the
inequalities~\eqref{Th.main.theorem.for.finite.zeroes.function.1.proof.10}
are exactly~\eqref{intermediate.11.lemma.6}. At last, since
$\varphi''(z)<0$ in $(\gamma^{(-)}_{2N+1},\beta_1)$ and
$\varphi(z)>0$ for $z\in\mathbb{R}$, $Q(z)$ has no zeroes in
$(\gamma^{(-)}_{2N+1},\beta_1)$ by~\eqref{main.function.2}. This
fact we use
in~\eqref{Th.main.theorem.for.finite.zeroes.function.1.proof.11}.

\vspace{2mm}

Now~\eqref{Th.main.theorem.for.finite.zeroes.function.1.proof.4},~\eqref{Th.main.theorem.for.finite.zeroes.function.1.proof.8}
and~\eqref{Th.main.theorem.for.finite.zeroes.function.1.proof.9}--\eqref{Th.main.theorem.for.finite.zeroes.function.1.proof.11}
imply
\begin{equation*}\label{Th.main.theorem.for.finite.zeroes.function.1.proof.12}
2r\leqslant Z_\mathbb{R}(Q)\leqslant 2r+Z_\mathbb{R}(Q_{1}).
\end{equation*}
These inequalities are equivalent
to~\eqref{finite.zeroes.function.1.condition}, since
$E(\varphi')=2r$.

\vspace{2mm}

\noindent Case II. Let $\varphi$ have at least one real zero.

\vspace{1mm}

Let $\alpha_S$ and $\alpha_L$ be the smallest and the largest
zeroes of $\varphi$. It is easy to see that, for all sufficiently
small $\varepsilon>0$,
\begin{equation}\label{Th.main.theorem.for.finite.zeroes.function.1.proof.14.5}
\varphi(\alpha_S-\varepsilon)\varphi'(\alpha_S-\varepsilon)<0.
\end{equation}
Also since $\varphi(z)\to0$ whenever $z\to-\infty$, we have, for
sufficiently large negative $C$,
\begin{equation}\label{Th.main.theorem.for.finite.zeroes.function.1.proof.14.5.1}
\varphi(C)\varphi'(C)>0.
\end{equation}
From~\eqref{Th.main.theorem.for.finite.zeroes.function.1.proof.14.5}--\eqref{Th.main.theorem.for.finite.zeroes.function.1.proof.14.5.1}
it follows that $\varphi'$ has an odd number of zeroes, counting
multiplicities, say $2r_{-}+1,r_{-}\geqslant0$, in
$(-\infty,\alpha_S)$, since $\varphi(z)\neq0$ in
$(-\infty,\alpha_S)$. Let $\beta^{-}_L$ be the largest zero of
$\varphi'$ in the interval $(-\infty,\alpha_S)$. Then by
Theorem~\ref{corol.4} and by Remark~\ref{remark.2.8}, we have
\begin{equation}\label{Th.main.theorem.for.finite.zeroes.function.1.proof.15}
2r_{-}\leqslant Z_{(-\infty,\beta^{-}_{L}]}(Q)\leqslant 2r_{-}+
Z_{(-\infty,\beta^{-}_{L}]}(Q_{1}),
\end{equation}
since $2r_{-}=2\left[\frac{2r_{-}+1}2\right]$.

Further, by assumption, $\varphi(z)$ is positive for
$z\in(\alpha_L,+\infty)$ and tends to $+\infty$ whenever~$z$
tends~to~$+\infty$. Therefore, $\varphi'(z)\to+\infty$ as
$z\to+\infty$ and, by~\eqref{corol.4.proof.1}, we have
$\varphi'(z)>0$ in a small right-sided neighbourhood of the point
$\alpha_L$. Consequently, $\varphi'$ has an even number of zeroes,
counting multiplicities, say $2r_{+}\geqslant0$, in the interval
$(\alpha_L,+\infty)$. If $\varphi'$ has at least one zero in
$(\alpha_L,+\infty)$ and $\beta^{+}_S$ is the smallest one, then
by Theorem~\ref{corol.4}, we have
\begin{equation}\label{Th.main.theorem.for.finite.zeroes.function.1.proof.14}
2r_{+}\leqslant Z_{[\beta^{+}_{S},+\infty)}(Q)\leqslant 2r_{+}+
Z_{[\beta^{+}_{S},+\infty)}(Q_{1})
\end{equation}
since $2r_{+}=2\left[\frac{2r_{+}}2\right]$.

\begin{itemize}
\item[II.1] Let $\varphi$ have a unique real zero $\alpha$. Then
$\alpha=\alpha_S=\alpha_L$.

If $\varphi'$ has no zeroes in the interval $(\alpha,+\infty)$ and
has exactly $2r_{-}+1$ zeroes, counting multiplicities, in
$(-\infty,\alpha)$, then from Theorem~\ref{corol.new} and
Remark~\ref{remark.2.6} it follows that
\begin{equation}\label{Th.main.theorem.for.finite.zeroes.function.1.proof.15.55}
0\leqslant Z_{(\beta^{-}_{L},+\infty)}(Q)\leqslant
Z_{(\beta^{-}_{L},+\infty)}(Q_{1}),
\end{equation}
where $\beta^{-}_{L}$ is the largest zero of $\varphi'$ in the
interval $(-\infty,\alpha)$. Now the
inequalities~\eqref{Th.main.theorem.for.finite.zeroes.function.1.proof.15}
and~\eqref{Th.main.theorem.for.finite.zeroes.function.1.proof.15.55}
imply~\eqref{finite.zeroes.function.1.condition}, since
$E(\varphi')=2r_{-}+1$ in this case.

Let $\varphi'$ have exactly $2r_{+}>0$ zeroes, counting
multiplicities, in the interval $(\alpha,+\infty)$ and let
$\beta^{+}_S$ be the smallest one. By Theorem~\ref{corol.new}, we
have
\begin{equation}\label{Th.main.theorem.for.finite.zeroes.function.1.proof.15.56}
0\leqslant Z_{(\beta^{-}_{L},\beta^{+}_S)}(Q)\leqslant
Z_{(\beta^{-}_{L},\beta^{+}_S)}(Q_{1}).
\end{equation}
This inequality, together
with~\eqref{Th.main.theorem.for.finite.zeroes.function.1.proof.15}
and~\eqref{Th.main.theorem.for.finite.zeroes.function.1.proof.14},
gives~\eqref{finite.zeroes.function.1.condition}, since
$E(\varphi')=2r_{+}+2r_{-}+1$ in this case.
\item[II.2]
Let $\varphi$ have exactly $l\geqslant2$ \textit{distinct} real
zeroes, say $\alpha_S=\alpha_1<\alpha_2<\ldots<\alpha_l=\alpha_L$.
By Rolle's theorem, $\varphi'$ has an odd number of zeroes, say
$2M_i+1$, $M_i\geqslant0$, counting multiplicities, in each of the
intervals~$(\alpha_i,\alpha_{i+1})$, $i=1,2,\ldots,l-1$. If we
denote by $\beta^{(1)}_S$ and $\beta^{(l-1)}_L$ the smallest zero
of $\varphi'$ in the interval $(\alpha_1,\alpha_{2})$ and the
largest zero of $\varphi'$ in the interval
$(\alpha_{l-1},\alpha_l)$, then, by
Corollary~\ref{biggest.technical.theorem} and by
Theorem~\ref{corol.new}, we have
\begin{equation}\label{Th.main.theorem.for.finite.zeroes.function.1.proof.13}
\displaystyle\sum_{i=1}^{l-1}2M_i\leqslant
Z_{[\beta^{(1)}_{S},\beta^{(l-1)}_{L}]}(Q)\leqslant
\sum_{i=1}^{l-1}2M_i+Z_{[\beta^{(1)}_{S},\beta^{(l-1)}_{L}]}(Q_{1}).
\end{equation}
In the interval $(\beta^{-}_L,\beta^{(1)}_{S})$, we use
Theorem~\ref{corol.new} to yield
\begin{equation}\label{Th.main.theorem.for.finite.zeroes.function.1.proof.20}
0\leqslant Z_{(\beta^{-}_L,\beta^{(1)}_{S})}(Q)\leqslant
Z_{(\beta^{-}_L,\beta^{(1)}_{S})}(Q_{1}),
\end{equation}
where $\beta^{-}_L$ is the largest zero of $\varphi'$ in the
interval $(-\infty,\alpha_1)$. The existence of this zero was
proved above.

If $\varphi'$ has no zeroes in the interval $(\alpha_l,+\infty)$,
then, by Theorem~\ref{corol.new} and by Remark~\ref{remark.2.6},
we have
\begin{equation}\label{Th.main.theorem.for.finite.zeroes.function.1.proof.21}
0\leqslant Z_{(\beta^{(l-1)}_{L},+\infty)}(Q)\leqslant
Z_{(\beta^{(l-1)}_{L},+\infty)}(Q_{1}).
\end{equation}
Thus, if we suppose that $\varphi'$ has $2r_{-}+1$ zeroes,
counting multiplicities, in $(-\infty,\alpha_1)$, then
$E(\varphi')=2r_{-}+1+\sum_{i=1}^{l-1}2M_i$, so the
inequalities~\eqref{Th.main.theorem.for.finite.zeroes.function.1.proof.15},
\eqref{Th.main.theorem.for.finite.zeroes.function.1.proof.20},
\eqref{Th.main.theorem.for.finite.zeroes.function.1.proof.13}
and~\eqref{Th.main.theorem.for.finite.zeroes.function.1.proof.21}
imply~\eqref{finite.zeroes.function.1.condition}.

If $\varphi'$ has at least one zero in the interval
$(\alpha_l,+\infty)$ and $\beta^{+}_S$ is the smallest one, then
by Theorem~\ref{corol.new}, we have
\begin{equation}\label{Th.main.theorem.for.finite.zeroes.function.1.proof.22}
0\leqslant Z_{(\beta^{(l-1)}_{L},\beta^{+}_S)}(Q)\leqslant
Z_{(\beta^{(l-1)}_{L},\beta^{+}_S)}(Q_{1}).
\end{equation}
If we suppose that $\varphi'$ has $2r_{+}>0$ zeroes, counting
multiplicities, in the interval $(\alpha_l,+\infty)$, and it has
$2r_{-}+1$ zeroes, counting multiplicities, in
$(-\infty,\alpha_1)$, then we obtain the following
$E(\varphi')=2r_{-}+2r_{+}+1+\sum_{i=1}^{l-1}2M_i$,
so~\eqref{finite.zeroes.function.1.condition} follows
from~\eqref{Th.main.theorem.for.finite.zeroes.function.1.proof.15},
\eqref{Th.main.theorem.for.finite.zeroes.function.1.proof.20},
\eqref{Th.main.theorem.for.finite.zeroes.function.1.proof.13},
\eqref{Th.main.theorem.for.finite.zeroes.function.1.proof.22}
and~\eqref{Th.main.theorem.for.finite.zeroes.function.1.proof.14}.
\end{itemize}
\end{proof}

\noindent Now we derive a bound on the number of real zeroes of
$Q$ associated with a real polynomial.
\begin{theorem}\label{Th.main.theorem.for.finite.zeroes.function.2}
Let $\varphi$ be a real polynomial possessing property~A.
\begin{itemize}
\item[\emph{I.}] If $\varphi(z)\neq0$ for $z\in\mathbb{R}$, then
\begin{equation}\label{finite.zeroes.function.2.condition.without.real.zeroes}
2\left[\dfrac{E(\varphi')}2\right]+2\leqslant
Z_\mathbb{R}\left(Q\right)\leqslant2\left[\dfrac{E(\varphi')}2\right]+2+Z_\mathbb{R}\left(Q_{1}\right),
\end{equation}
where $E(\varphi')$ is the number of extra zeroes of $\varphi'$.
\item[\emph{II.}] If $\varphi$ has at least one real zero, then the
inequalities~\eqref{finite.zeroes.function.1.condition} hold.
\end{itemize}
\end{theorem}
\begin{proof} Without loss of generality, we may assume that the
leading coefficient of $\varphi$ is positive. Then
$\varphi(z)\to+\infty$ whenever $z\to+\infty$ and
$\varphi(z)\to\pm\infty$ whenever $z\to-\infty$.

\vspace{2mm}

\noindent I. Let $\varphi(z)\neq0$ for $z\in\mathbb{R}$. Then
$\deg\varphi$ is even and $\deg\varphi'=\deg\varphi-1$.
Consequently, $\varphi'$ has an~odd number of zeroes, all of which
are extra zeroes of $\varphi'$ (see also
Theorem~\ref{Theorem.number.of.extra.zeroes.type.II} with $n=0$).

\vspace{1mm}

We denote the real zeroes of $\varphi'$ by
$\beta_1\leqslant\beta_2\leqslant\ldots\leqslant\beta_{2r+1}$, so
$E(\varphi')=2r+1$. By assumption, $\varphi(z)>0$ for
$z\in\mathbb{R}$, therefore,~$\varphi'(z)\to+\infty$ whenever
$z\to+\infty$ and $\varphi'(z)\to-\infty$ whenever $z\to-\infty$.
Consequently, $\varphi'(z)<0$ for $z\in(-\infty,\beta_1)$ and
$\varphi'(z)>0$ for $z\in(\beta_{2r+1},+\infty)$.
Also~$\varphi''(z)\to+\infty$ whenever $z\to\pm\infty$. As in the
proof of
Theorem~\ref{Th.main.theorem.for.finite.zeroes.function.1}, one
can show that
the~inequality~\eqref{Th.main.theorem.for.finite.zeroes.function.1.proof.1}
holds and implies $\varphi''(\beta_1-\varepsilon)>0$ for all
sufficiently small~$\varepsilon>0$. Analogously
to~\eqref{Th.main.theorem.for.finite.zeroes.function.1.proof.2},
we have
\begin{equation}\label{Th.main.theorem.for.finite.zeroes.function.2.proof.2}
\varphi'(\beta_{2r+1}+\delta)\varphi''(\beta_{2r+1}+\delta)>0
\end{equation}
for all sufficiently small~$\delta>0$, that is,
$\varphi''(\beta_{2r+1}+\delta)>0$. Therefore, $\varphi''$ has an
even number of zeroes in each of the intervals $(-\infty,\beta_1)$
and $(\beta_{2r+1},+\infty)$. Moreover, since $\varphi(z)>0$ for
$z\in\mathbb{R}$ by assumption, $Q(\beta_1-\varepsilon)>0$ and
$Q(\beta_{2r+1}+\delta)>0$ for all sufficiently small~$\delta>0$
and~$\varepsilon>0$ according to~\eqref{main.function.2}.
But~$Q(z)<0$ for all sufficiently large real $z$ (see the proof of
Theorem~\ref{Theorem.finitude.number.of.zeroes.of.Q}),
consequently, $Q$ has an odd number of zeroes in each of the
intervals~$(-\infty,\beta_1)$ and~$(\beta_{2r+1},+\infty)$. Thus,
\begin{equation}\label{Th.main.theorem.for.finite.zeroes.function.2.proof.2.5}
Z_{(-\infty,\beta_{1})}(Q)\geqslant1,
\end{equation}
\begin{equation}\label{Th.main.theorem.for.finite.zeroes.function.2.proof.3}
Z_{(\beta_{2r+1},+\infty)}(Q)\geqslant1.
\end{equation}
By Corollary~\ref{biggest.technical.theorem}, we have
\begin{equation}\label{Th.main.theorem.for.finite.zeroes.function.2.proof.4}
2r\leqslant Z_{[\beta_{1},\beta_{2r+1}]}(Q)\leqslant
2r+Z_{[\beta_{1},\beta_{2r+1}]}(Q_{1}).
\end{equation}

Since $\varphi''$ has an even number of zeroes in each of the
intervals $(-\infty,\beta_{1})$ and $(\beta_{2r+1},+\infty)$, in
the same way as in Case I of
Theorem~\ref{Th.main.theorem.for.finite.zeroes.function.1}
(see~\eqref{Th.main.theorem.for.finite.zeroes.function.1.proof.8}),
one can show that
\begin{equation}\label{Th.main.theorem.for.finite.zeroes.function.2.proof.10}
1\leqslant Z_{(-\infty,\beta_{1})}(Q)\leqslant1+
Z_{(-\infty,\beta_{1})}(Q_{1}),
\end{equation}
\begin{equation}\label{Th.main.theorem.for.finite.zeroes.function.2.proof.11}
1\leqslant Z_{(\beta_{2r+1},+\infty)}(Q)\leqslant1+
Z_{(\beta_{2r+1},+\infty)}(Q_{1}).
\end{equation}
Now~\eqref{Th.main.theorem.for.finite.zeroes.function.2.proof.4}--\eqref{Th.main.theorem.for.finite.zeroes.function.2.proof.11}
imply
\begin{equation*}\label{Th.main.theorem.for.finite.zeroes.function.2.proof.12}
2r+2\leqslant Z_\mathbb{R}(Q)\leqslant 2r+2+Z_\mathbb{R}(Q_{1}),
\end{equation*}
which is equivalent
to~\eqref{finite.zeroes.function.2.condition.without.real.zeroes},
since $E(\varphi')=2r+1$ and, therefore,
$2\left[\frac{E(\varphi')}2\right]=2r$.

\vspace{2mm}

\noindent II. Let $\varphi$ have at least one real zero and let
$\alpha_S$ and $\alpha_L$ be the smallest and the largest
zeroes~of~$\varphi$, respectively. It is easy to see that the
inequality~\eqref{Th.main.theorem.for.finite.zeroes.function.1.proof.14.5}
holds in this case. Moreover, since the function $\varphi(z)$
tends to $\pm\infty$ as $z\to-\infty$, we have, for sufficiently
large negative $C$, $\varphi(C)\varphi'(C)<0$. Consequently,
$\varphi'$ has an even number of zeroes (or has no zeroes) in
$(-\infty,\alpha_S)$, since $\varphi(z)\neq0$ for
$z\in(-\infty,\alpha_S)$. Analogously, $\varphi'$ has an even
number of zeroes (or has no zeroes) in $(\alpha_L,+\infty)$.
Further, we consider the following two cases.

\begin{itemize}
\item[II.1] Let $\varphi$ have a unique real zero $\alpha=\alpha_S=\alpha_L$.
If $\varphi'(z)\neq0$ for $z\in\mathbb{R}\backslash\{\alpha\}$,
then by Theorem~\ref{corol.real.exis.6}, the
inequality~\eqref{corol.real.axis.4.main.formula} holds. The
latter is equivalent
to~\eqref{finite.zeroes.function.1.condition}, since
$E(\varphi')=0$ in this~case.

Now suppose that $\varphi'$ has at least one zero in exactly one
of the intervals $(-\infty,\alpha)$ and $(\alpha,+\infty)$.
Without loss of generality, we may assume that $\varphi'(z)\neq0$
for $z\in(-\infty,\alpha)$ and $\varphi'$ has an even number of
zeroes, counting multiplicities, say $2r_{+}>0$, in
$(\alpha,+\infty)$. Let $\beta^{+}_S$ be the smallest zero of
$\varphi'$ in this interval. Then by the same argument as in
Case~II of
Theorem~\ref{Th.main.theorem.for.finite.zeroes.function.1}, one
can prove that the
inequalities~\eqref{Th.main.theorem.for.finite.zeroes.function.1.proof.14}
hold. Moreover, by Theorem~\ref{corol.new} and
Remark~\ref{remark.2.6}, we have
\begin{equation*}\label{Th.main.theorem.for.finite.zeroes.function.2.proof.14}
0\leqslant Z_{(-\infty,\beta^{+}_{S})}(Q)\leqslant
Z_{(-\infty,\beta^{+}_{S})}(Q_{1}).
\end{equation*}
Combined
with~\eqref{Th.main.theorem.for.finite.zeroes.function.1.proof.14},
these inequalities
give~\eqref{finite.zeroes.function.1.condition}, since
$E(\varphi')=2r_{+}$ in this case.

At last, let $\varphi$ have a unique real zero $\alpha$ and let
$\varphi'$ has at least one zero in each of the intervals
$(-\infty,\alpha)$ and $(\alpha,+\infty)$. Let $\varphi'$ have
exactly $2r_{-}>0$ zeroes, counting multiplicities, in the
interval $(-\infty,\alpha)$ and exactly $2r_{+}>0$ zeroes,
counting multiplicities, in $(\alpha,+\infty)$. Let $\beta^{-}_L$
be the largest zero of $\varphi'$ in $(-\infty,\alpha)$ and let
$\beta^{+}_S$ be the smallest zero of $\varphi'$ in
$(\alpha,+\infty)$. From Theorems~\ref{corol.4}
and~\ref{corol.new} it follows that the
inequalities~\eqref{Th.main.theorem.for.finite.zeroes.function.1.proof.15},~\eqref{Th.main.theorem.for.finite.zeroes.function.1.proof.14}
and~\eqref{Th.main.theorem.for.finite.zeroes.function.1.proof.15.56}
hold. These inequalities
imply~\eqref{finite.zeroes.function.1.condition}, since
$E(\varphi')=2r_{-}+2r_{+}$ in this case.

\item[II.2] Let now $\varphi'$ have $l\geqslant2$ \textit{distinct}
real zeroes, counting multiplicities, say
$\alpha_1<\ldots<\alpha_l$. Then by Rolle's theorem, $\varphi'$
has an odd number of zeroes, say $2M_i+1$, $M_i\geqslant0$,
counting multiplicities, in each of the
intervals~$(\alpha_i,\alpha_{i+1})$, $i=1,2,\ldots,l-1$. If
$\beta^{(1)}_S$ is the smallest zero of $\varphi'$ in the interval
$(\alpha_1,\alpha_{2})$ and $\beta^{(l-1)}_L$ is the largest zero
of $\varphi'$ in the interval $(\alpha_{l-1},\alpha_l)$, then, as
in the proof of Case~II of
Theorem~\ref{Th.main.theorem.for.finite.zeroes.function.1}, the
inequalities~\eqref{Th.main.theorem.for.finite.zeroes.function.1.proof.13}
hold by Corollary~\ref{biggest.technical.theorem} and
Theorem~\ref{corol.new}. Moreover, if $\varphi'$ has
$2r_{-}\geqslant0$ zeroes, counting multiplicities, in the
interval $(-\infty,\alpha_1)$ and $2r_{+}\geqslant0$ zeroes,
counting multiplicities, in the interval $(\alpha_l,+\infty)$,
then Theorems~\ref{corol.4} and~\ref{corol.new} with
Remark~\ref{remark.2.6} imply
\begin{equation}\label{Th.main.theorem.for.finite.zeroes.function.2.proof.155}
2r_{-}\leqslant Z_{(-\infty,\beta^{(1)}_S)}(Q)\leqslant 2r_{-}+
Z_{(-\infty,\beta^{(1)}_S)}(Q_{1})
\end{equation}
and
\begin{equation}\label{Th.main.theorem.for.finite.zeroes.function.2.proof.144}
2r_{+}\leqslant Z_{(\beta^{(l-1)}_L,+\infty)}(Q)\leqslant 2r_{+}+
Z_{(\beta^{(l-1)}_L,+\infty)}(Q_{1}),
\end{equation}
since $2r_{-}=2\left[\frac{2r_{-}}2\right]$ and
$2r_{+}=2\left[\frac{2r_{+}}2\right]$. Obviously, we have
$E(\varphi')=2r_{+}+2r_{-}+\sum_{i=1}^{l-1}2M_i$ in this case.
Therefore, the
inequalities~\eqref{finite.zeroes.function.1.condition} follow
from~\eqref{Th.main.theorem.for.finite.zeroes.function.1.proof.13},
\eqref{Th.main.theorem.for.finite.zeroes.function.2.proof.155}
and~\eqref{Th.main.theorem.for.finite.zeroes.function.2.proof.144}.
\end{itemize}
\vspace{-7mm}
\end{proof}

Our next theorem concerns the last subclass of entire functions
in~$\mathcal{L-P^*}$ with finitely many zeroes, namely,
polynomials multiplied by exponentials of the form
$e^{-az^{2}+bz}$.

\begin{theorem}\label{Th.main.theorem.for.finite.zeroes.function.3}
Let $p$ be a real polynomial and let $\varphi$ be the function
\begin{equation}\label{finite.zeroes.function.3}
\displaystyle\varphi(z)=e^{-az^{2}+bz}p(z),
\end{equation}
where $a>0$ and $b\in\mathbb{R}$. Let $\varphi$ possess
property~A.
\begin{itemize}
\item[\emph{I.}] If $p$ has no real zeroes, then the
inequalities~\eqref{finite.zeroes.function.1.condition} hold.
\item[\emph{II.}] If $p$ has at least one real zero, then
\begin{equation}\label{finite.zeroes.function.3.condition.with.real.zeroes}
2\left[\dfrac{E(\varphi')}2\right]-2\leqslant
Z_\mathbb{R}\left(Q\right)\leqslant2\left[\dfrac{E(\varphi')}2\right]-2+Z_\mathbb{R}\left(Q_{1}\right),
\end{equation}
where $E(\varphi')$ is the number of extra zeroes of $\varphi'$.
\end{itemize}
\end{theorem}
\begin{proof} Without loss of generality, we may assume that the
leading coefficient of $p$ is positive. Then $\varphi(z)\to+\,0$
whenever $z\to+\infty$ and $\varphi(z)\to0$ whenever
$z\to-\infty$.

\vspace{2mm}

\noindent I. Let $p(z)\neq0$ for $z\in\mathbb{R}$. Then $\deg p$
is even and $\varphi'(z)=e^{-az^{2}+bz}g(z)$, where $g$ is a real
polynomial of odd degree, which is equal to $\deg p+1$. Thus,
$\varphi'$ has an odd number of real zeroes, counting
multiplicities, say
$\beta_1\leqslant\beta_2\leqslant\ldots\leqslant\beta_{2r+1}$, all
of which are extra zeroes of $\varphi'$ (see also
Theorem~\ref{Theorem.number.of.extra.zeroes.type.III} with $n=0$).
Therefore,
the~inequalities~\eqref{Th.main.theorem.for.finite.zeroes.function.2.proof.4}
hold in this case.

Since $p(z)\neq0$ for $z\in\mathbb{R}$, $\varphi(z)>0$ on the real
axis and $\varphi(z)\to0$ whenever $z\to\pm\infty$ by assumption.
Then~$\varphi'(z)\to-\,0$ whenever $z\to+\infty$, and
$\varphi'(z)\to+\,0$ whenever $z\to-\infty$. In particular,
$\varphi'(z)>0$ for $z\in(-\infty,\beta_1)$ and $\varphi'(z)<0$
for $z\in(\beta_{2r+1},+\infty)$. Also~$\varphi''(z)\to0$ whenever
$z\to\pm\infty$ and $\varphi(z)>0$ when $z$ is real and $|z|$ is
sufficiently large. From the
inequality~\eqref{Th.main.theorem.for.finite.zeroes.function.1.proof.1}
it follows that $\varphi''(\beta_1-\varepsilon)<0$ for all
sufficiently small $\varepsilon>0$. By a similar reasoning, one
can show that $\varphi''(\beta_{2r+1}+\delta)<0$ for all
sufficiently small $\delta>0$. Consequently, $\varphi''$ has an
odd number of zeroes in each of the intervals $(-\infty,\beta_1)$
and $(\beta_{2r+1},+\infty)$. Moreover, since $\varphi(z)$ is
positive on the real line by assumption,
$Q(\beta_1-\varepsilon)<0$ and $Q(\beta_{2r+1}+\delta)<0$ for all
sufficiently small $\varepsilon>0$ and $\delta>0$
by~\eqref{main.function.2}. Consequently, $Q$ has an even number
of zeroes in each of the intervals $(-\infty,\beta_1)$ and
$(\beta_{2r+1},+\infty)$, because $Q(z)<0$ for all sufficiently
large real $z$ (see the proof of
Theorem~\ref{Theorem.finitude.number.of.zeroes.of.Q}). Thus, in
the same way as in the proof of Case~I of
Theorem~\ref{Th.main.theorem.for.finite.zeroes.function.1}
(see~\eqref{Th.main.theorem.for.finite.zeroes.function.1.proof.9}--\eqref{Th.main.theorem.for.finite.zeroes.function.1.proof.11}),
it is easy to show that
\begin{equation}\label{Th.main.theorem.for.finite.zeroes.function.3.proof.1}
0\leqslant Z_{(-\infty,\beta_1)}(Q)\leqslant
Z_{(-\infty,\beta_1)}(Q_{1}).
\end{equation}
And analogously,
\begin{equation}\label{Th.main.theorem.for.finite.zeroes.function.3.proof.2}
0\leqslant Z_{(\beta_{2r+1},+\infty)}(Q)\leqslant
Z_{(\beta_{2r+1},+\infty)}(Q_{1}).
\end{equation}
Since $E(\varphi')=2r+1$ in this case, the
inequalities~\eqref{Th.main.theorem.for.finite.zeroes.function.2.proof.4}
and~\eqref{Th.main.theorem.for.finite.zeroes.function.3.proof.1}--\eqref{Th.main.theorem.for.finite.zeroes.function.3.proof.2}
imply~\eqref{finite.zeroes.function.1.condition}.

\vspace{2mm}

\noindent II. Let $p$ have at least one real zero and let
$\alpha_S$ and $\alpha_L$ be the smallest and the~largest zeroes
of $\varphi$. It is easy to see that the
inequalities~\eqref{Th.main.theorem.for.finite.zeroes.function.1.proof.14.5}
and~\eqref{Th.main.theorem.for.finite.zeroes.function.1.proof.14.5.1}
hold in this case. From these inequalities it follows that
$\varphi'$ has an odd number of zeroes, counting multiplicities,
say $2r_{-}+1,r_{-}\geqslant0$, in $(-\infty,\alpha_S)$, since
$\varphi(z)\neq0$ in this interval. Analogously, one can show that
$\varphi'$ has an odd number of zeroes, counting multiplicities,
say $2r_{+}+1,r_{+}\geqslant0$, in the interval
$(\alpha_L,+\infty)$. Let $\beta^{-}_L$ be the~largest zero of
$\varphi'$ in the interval $(-\infty,\alpha_S)$ and let
$\beta^{+}_S$ be the smallest zero of $\varphi'$ in
$(\alpha_L,+\infty)$. Then by Theorem~\ref{corol.4}, we have
\begin{equation}\label{Th.main.theorem.for.finite.zeroes.function.3.proof.3}
2r_{-}\leqslant Z_{(-\infty,\beta^{-}_{L}]}(Q)\leqslant 2r_{-}+
Z_{(-\infty,\beta^{-}_{L}]}(Q_{1}),
\end{equation}
\begin{equation}\label{Th.main.theorem.for.finite.zeroes.function.3.proof.4}
2r_{+}\leqslant Z_{[\beta^{+}_{S},+\infty)}(Q)\leqslant 2r_{+}+
Z_{[\beta^{+}_{S},+\infty)}(Q_{1}),
\end{equation}
since $2r_{-}=2\left[\frac{2r_{-}+1}2\right]$ and
$2r_{+}=2\left[\frac{2r_{+}+1}2\right]$.

\vspace{1mm}

If $\varphi$ has a unique real zero $\alpha=\alpha_S=\alpha_L$,
then by Theorem~\ref{corol.new}, we have
\begin{equation}\label{Th.main.theorem.for.finite.zeroes.function.3.proof.7}
0\leqslant Z_{(\beta^{-}_{L},\beta^{+}_{S})}(Q)\leqslant
Z_{(\beta^{-}_{L},\beta^{+}_{S})}(Q_{1}).
\end{equation}
Summing the
inequalities~\eqref{Th.main.theorem.for.finite.zeroes.function.3.proof.3}--\eqref{Th.main.theorem.for.finite.zeroes.function.3.proof.7},
we
obtain~\eqref{finite.zeroes.function.3.condition.with.real.zeroes},
since $E(\varphi')=2r_{-}+1+2r_{+}+1$ in this case.

\vspace{1mm}

If $\varphi$ has exactly $l\geqslant2$ \textit{distinct} zeroes,
say $\alpha_1<\alpha_2<\ldots<\alpha_l$, then by Rolle's theorem,
$\varphi'$ has an odd number of zeroes, say $2M_i+1$,
$M_i\geqslant0$, counting multiplicities, in each of the
intervals~$(\alpha_i,\alpha_{i+1})$, $i=1,2,\ldots,l-1$. As above,
from Corollary~\ref{biggest.technical.theorem} and
Theorem~\ref{corol.new} it follows that
\begin{equation}\label{Th.main.theorem.for.finite.zeroes.function.3.proof.13}
\displaystyle\sum_{i=1}^{l-1}2M_i\leqslant
Z_{(\beta^{-}_{L},\beta^{+}_{S})}(Q)\leqslant
\sum_{i=1}^{l-1}2M_i+Z_{(\beta^{-}_{L},\beta^{+}_{S})}(Q_{1}).
\end{equation}
Now the
inequalities~\eqref{Th.main.theorem.for.finite.zeroes.function.3.proof.3}--\eqref{Th.main.theorem.for.finite.zeroes.function.3.proof.4}
and~\eqref{Th.main.theorem.for.finite.zeroes.function.3.proof.13}
imply
\begin{equation*}\label{Th.main.theorem.for.finite.zeroes.function.3.proof.5}
\displaystyle2r_{-}+2r_{+}+\sum_{i=1}^{l-1}2M_i\leqslant
Z_\mathbb{R}\left(Q\right)\leqslant2r_{-}+2r_{+}+\sum_{i=1}^{l-1}2M_i+Z_\mathbb{R}\left(Q_{1}\right),
\end{equation*}
which is equivalent
to~\eqref{finite.zeroes.function.3.condition.with.real.zeroes},
since $E(\varphi')=2r_{-}+1+2r_{+}+1+\sum_{i=1}^{l-1}2M_i$.
\end{proof}

Theorems~\ref{Th.main.theorem.for.finite.zeroes.function.1}--\ref{Th.main.theorem.for.finite.zeroes.function.3}
describe all functions in $\mathcal{L-P^*}$ with \textit{finitely}
many zeroes. The next theorem provides a bound on the number of
real zeroes of $Q$ associated with a function in $\mathcal{L-P^*}$
with \textit{infinitely} many real zeroes.
\begin{theorem}\label{Th.main.theorem.for.infinite.zeroes.functions}
Let $\varphi\in\mathcal{L-P^*}$ and suppose that~$\varphi$ has
infinitely many real zeroes. If $\varphi$ possesses property~A,
then the inequalities~\eqref{finite.zeroes.function.1.condition}
hold.
\end{theorem}
\begin{proof}
If $\varphi$ has infinitely many positive and negative zeroes,
then $\varphi'$ can have extra zeroes only between two consecutive
zeroes of $\varphi$. Therefore, $E(\varphi')$ is an even number
and the inequalities~\eqref{finite.zeroes.function.1.condition}
follow from Corollary~\ref{biggest.technical.theorem} and
Theorem~\ref{corol.new}.

Let $\varphi$ have infinitely many real zeroes but only finitely
many positive or negative zeroes. Without loss of generality, we
may assume that $\varphi$ has the largest real zero, say
$\alpha_L$. If $\beta<\alpha_L$ is a zero of~$\varphi'$ such that
$\varphi(z)\neq0$ and $\varphi'(z)\neq0$ for
$z\in(\beta,\alpha_L)$, then applying
Corollary~\ref{biggest.technical.theorem} and
Theorem~\ref{corol.new} to the interval~$(-\infty,\beta]$ and
Theorems~\ref{corol.new} and~\ref{corol.4} (see also
Remark~\ref{remark.2.6}) to the interval~$(\beta,+\infty)$, we
again obtain~\eqref{finite.zeroes.function.1.condition}.
\end{proof}

Now we are able to prove the following general theorem.

\begin{theorem}\label{technical.theorem.condition.A}
Let $\varphi\in\mathcal{L-P^*}$ and suppose that $\varphi$
has exactly $2m$ nonreal zeroes. If~$\varphi$ possesses
property~A, then
\begin{equation}\label{main.result.2}
2m-2m_{1}\leqslant
Z_\mathbb{R}\left(Q\right)\leqslant2m-2m_{1}+Z_\mathbb{R}\left(Q_{1}\right),
\end{equation}
where $2m_1=Z_\mathbb{C}(\varphi')$.
\end{theorem}
\begin{proof} The theorem follows from
Theorems~\ref{Theorem.number.of.extra.zeroes.type.I}--\ref{Theorem.number.of.extra.zeroes.type.IV}
combined with
Theorems~\ref{Th.main.theorem.for.finite.zeroes.function.1}--\ref{Th.main.theorem.for.infinite.zeroes.functions}.

In fact, consider the following four cases.
\begin{itemize}
\item[I.] Let $\varphi$ be a real polynomial. Then $\varphi'$ has
exactly $\deg\varphi-1$ zeroes, counting multiplicities.

If $\varphi(z)\neq0$ for $z\in\mathbb{R}$, then by Theorem~\ref{Theorem.number.of.extra.zeroes.type.II}
(for $n=0$), we have $E(\varphi')=2m-2m_1-1$ (see~\eqref{Number.of.extra.zeroes.of.function.type.II}),
so according to Theorem~\ref{Th.main.theorem.for.finite.zeroes.function.2}, the
inequalities~\eqref{finite.zeroes.function.2.condition.without.real.zeroes}
imply~\eqref{main.result.2}.

If $\varphi$ has at least one real zero, then Theorem~\ref{Theorem.number.of.extra.zeroes.type.II}
(for $n=0$) implies $E(\varphi')=2m-2m_1$, so the inequalities~\eqref{main.result.2} follow
from~\eqref{finite.zeroes.function.1.condition} by
Theorem~\ref{Th.main.theorem.for.finite.zeroes.function.2}.
\item[II.] Let $\varphi$ be a function of the
type~\eqref{finite.zeroes.function.1}.

If $\varphi(z)\neq0$ for $z\in\mathbb{R}$, then by Theorem~\ref{Theorem.number.of.extra.zeroes.type.I} (for $n=0$),
$E(\varphi')=2m-2m_1$, so
from~\eqref{finite.zeroes.function.1.condition} we
obtain~\eqref{main.result.2} by
Theorem~\ref{Th.main.theorem.for.finite.zeroes.function.1}.

If $\varphi$ has at least one real zero, Theorem~\ref{Theorem.number.of.extra.zeroes.type.I} (for $n=0$) gives
$E(\varphi')=2m-2m_1+1$, and the
inequalities~\eqref{main.result.2} again follow
from~\eqref{finite.zeroes.function.1.condition}  by
Theorem~\ref{Th.main.theorem.for.finite.zeroes.function.1}.
\item[III.] If $\varphi$ is a function of the
type~\eqref{finite.zeroes.function.3}, Theorem~\ref{Theorem.number.of.extra.zeroes.type.III} (for $n=0$) implies
$E(\varphi')=2m-2m_1+1$, and
from~\eqref{finite.zeroes.function.1.condition} it follows that the
inequalities~\eqref{main.result.2} hold
by~Theorem~\ref{Th.main.theorem.for.finite.zeroes.function.3}.

If $\varphi$ has at least one real zero and $\deg p=2m+r$, then according to
Theorem~\ref{Theorem.number.of.extra.zeroes.type.III} (for $n=0$), we have $E(\varphi')=2m-2m_1+2$.
Now the inequalities~\eqref{main.result.2} follow
from~\eqref{finite.zeroes.function.3.condition.with.real.zeroes} by
Theorem~\ref{Th.main.theorem.for.finite.zeroes.function.3}.
\item[IV.] If $\varphi$ has infinitely many real zeroes, then
by Theorem~\ref{Theorem.number.of.extra.zeroes.type.IV} (for $n=0$), we have
\begin{equation*}\label{Th.main.theorem.final.proof.1}
2m-2m_1\leqslant E(\varphi')\leqslant2m-2m_1+1.
\end{equation*}
From these inequalities
and from~\eqref{finite.zeroes.function.1.condition} we
obtain~\eqref{main.result.2} by
Theorem~\ref{Th.main.theorem.for.infinite.zeroes.functions}.
\end{itemize}
\end{proof}

\begin{note}\label{remark.Edwards}
Example~\ref{example.Edwards} constructed by
S.\,Edwards~\cite{Edwards} can be used to show that
Theorem~\ref{technical.theorem.condition.A} is not valid for every
function in~$\mathcal{L-P^*}$. Indeed, for the
polynomial~\eqref{poly.example.Edwards}, we have
$Z_\mathbb{C}(p)=50$, $Z_\mathbb{C}(p')=50$, $Z_\mathbb{R}(Q)=4$
and $Z_\mathbb{R}(Q_1)=2$. Thus, that polynomial does not enjoy
the inequalities~\eqref{main.result.2} of
Theorem~\ref{technical.theorem.condition.A}, so necessarily does
not possess \textit{property~A}.
\end{note}

\begin{note}\label{remark.all.multiple.zeroes}
If a function $\varphi\in\mathcal{L-P^*}$ has only multiple real
zeroes, then, in the proofs of
Theorems~\ref{Th.main.theorem.for.finite.zeroes.function.1},
\ref{Th.main.theorem.for.finite.zeroes.function.2}
and~\ref{Th.main.theorem.for.finite.zeroes.function.3}, we can use
Remark~\ref{remark.multiple.zeroes} instead of
Theorem~\ref{corol.new}. Thus, the
inequalities~\eqref{main.result.2} also hold for functions in
$\mathcal{L-P^*}$ with only multiple real zeroes.
\end{note}

In conclusion of this section, we note that, by
Theorem~\ref{technical.theorem.condition.A} and
Remark~\ref{remark.all.multiple.zeroes}, we know two subclasses of
the class $\mathcal{L-P^*}$ such that for functions from these
subclasses the inequalities~\eqref{main.result.2} hold.
G.\,Csordas posed the following problem.

\vspace{2mm}

\textbf{Open problem.} \textit{ Describe all functions in
$\mathcal{L-P^*}$ satisfying the
inequalities~\eqref{main.result.2}.}

\setcounter{equation}{0}

\section{Proof of the Hawaii conjecture.}\label{section.Hawaii}

Now we are in a position to prove the Hawaii conjecture for
functions in the class~$\mathcal{L-P^*}$ by combining
Theorems~\ref{Th.analog.of.PW}
and~\ref{technical.theorem.condition.A}. We also prove
Proposition~\ref{proposition.1} from Introduction not only for
real polynomials but for all functions in~$\mathcal{L-P^*}$.
\begin{theorem}\label{theorem.the.Hawaii.conjecture}
Let $\varphi\in\mathcal{L-P^*}$ and $Q$ be a meromorphic function
defined by~\eqref{main.function.2}. If $\varphi$ has exactly $2m$
nonreal zeroes, counting multiplicities, then

\begin{equation}\label{theorem.the.Hawaii.conjecture.main.inequality}
2m-2m_1\leqslant Z_\mathbb{R}\left(Q\right)\leqslant2m,
\end{equation}
where $2m_1$ is the number of nonreal zeroes of $\varphi'$.
\end{theorem}
\begin{proof}

Indeed, by Theorem~\ref{technical.theorem.condition.A}, the
inequality
\begin{equation}\label{theorem.the.Hawaii.conjecture.proof.0000}
2m-2m_1\leqslant Z_\mathbb{R}\left(Q\right)
\end{equation}
is valid for functions in $\mathcal{L-P^*}$ possessing
\textit{property~A}. However, in the proofs of
Theorems~\ref{finite.zeroes.function.1}--\ref{Th.main.theorem.for.infinite.zeroes.functions}
we used the fact that $\varphi$ possesses \textit{property~A} only
when we applied Theorem~\ref{corol.new}. Moreover, in the proof of
Theorem~\ref{corol.new}, we used \textit{property~A} only for
the~upper bound but not for the lower one. Therefore, the
inequality~\eqref{theorem.the.Hawaii.conjecture.proof.0000} is
valid for all functions in $\mathcal{L-P^*}$.

\vspace{1mm}

Now we prove the upper bound in the number $Z_{\mathbb{R}}(Q)$. If
$Z_\mathbb{R}(Q)=0$, then the logarithmic derivative
$\varphi'/\varphi$ is decreasing on the real axis except its
poles. Therefore, $2m=2m_1$ in this case, so the theorem is true.

\vspace{1mm}

Let $Z_\mathbb{R}(Q)\neq0$. Then according to
Theorem~\ref{Th.analog.of.PW}, there exists a real~$\sigma_0$ such
that the function $\psi_0(z)=e^{-\sigma_0z}\varphi(z)$ possesses
\textrm{property~A} and
$Z_\mathbb{C}(\psi'_0)<Z_\mathbb{C}(\psi_0)$.

\vspace{1mm}

If $Z_\mathbb{R}\left(Q_{1}[\psi_0]\right)\neq0$, then we can
apply Theorem~\ref{Th.analog.of.PW} to $\psi'_0$ to get a real
$\sigma_1$ such that $\psi_1(z)=e^{-\sigma_1z}\psi'_0(z)$
possesses \textit{property~A} and
$Z_\mathbb{C}(\psi'_1)<Z_\mathbb{C}(\psi_1)$. If
$Z_\mathbb{R}\left(Q_{1}[\psi_1]\right)\neq0$, then we can apply
Theorem~\ref{Th.analog.of.PW} to $\psi'_1$ and so on. Thus, we
obtain a sequence of the functions $\psi_0,\psi_1,\psi_2,\ldots$
with \textit{property~A} satisfying the inequalities
$Z_\mathbb{C}(\psi'_j)<Z_\mathbb{C}(\psi_j)$, where
$\psi_j(z)=e^{-\sigma_jz}\psi'_{j-1}(z)$, $i=1,2,\ldots$

Since $\varphi$ has finitely many nonreal zeroes by assumption,
the sequence of the functions $\psi_j$ is finite. That is, there
exists a~nonnegative integer~$l\,(\leqslant m-1)$ such
that\footnote{We notice that a necessary condition for the
equality~\eqref{theorem.the.Hawaii.conjecture.proof.1} to be hold
is $Z_\mathbb{C}(\psi_l)=Z_\mathbb{C}(\psi'_l)$.}
\begin{equation}\label{theorem.the.Hawaii.conjecture.proof.1}
Z_\mathbb{R}\left(Q_{1}[\psi_l]\right)=0,
\end{equation}
while $Z_\mathbb{R}\left(Q[\psi_l]\right)>0$.

\vspace{1mm}

By construction, all the functions $\psi_j$ possess
\textit{property~A}. Consequently, we can apply
Theorem~\ref{technical.theorem.condition.A} to each of them to
obtain
\begin{equation}\label{theorem.the.Hawaii.conjecture.proof.2}
2m^{(j)}-2m_1^{(j)}\leqslant
Z_\mathbb{R}\left(Q[\psi_j]\right)\leqslant
2m^{(j)}-2m_1^{(j)}+Z_\mathbb{R}\left(Q_{1}[\psi_j]\right),\quad
j=0,1,\ldots,l,
\end{equation}
where we use notation $2m^{(j)}=Z_\mathbb{C}(\psi_j)$,
$2m_1^{(j)}=Z_\mathbb{C}(\psi'_j)$.

From~\eqref{theorem.the.Hawaii.conjecture.proof.1}
and~\eqref{theorem.the.Hawaii.conjecture.proof.2} it follows that
\begin{equation}\label{main.condition.1.2}
Z_\mathbb{R}\left(Q[\psi_l]\right)=2m^{(l)}-2m^{(l)}_{1}.
\end{equation}
Further, the
inequalities~\eqref{theorem.the.Hawaii.conjecture.proof.2} imply
\begin{equation}\label{main.result.1.1}
Z_\mathbb{R}\left(Q[\psi_j]\right)-Z_\mathbb{R}\left(Q_{1}[\psi_j]\right)\leqslant
2m^{(j)}-2m_1^{(j)},\quad j=0,1,\ldots,l-1.
\end{equation}
By construction of the functions $\psi_j$ (see
Remark~\ref{remark.new}), we have
\begin{equation*}\label{theorem.the.Hawaii.conjecture.proof.3}
Z_\mathbb{R}\left(Q_{1}[\psi_j]\right)=Z_\mathbb{R}\left(Q[\psi_{j+1}]\right),\quad
2m_1^{(j)}=2m^{(j+1)},\quad j=0,1,\ldots,l-1.
\end{equation*}
Therefore,~\eqref{main.result.1.1} can be rewritten in the form
\begin{equation}\label{theorem.the.Hawaii.conjecture.proof.4}
Z_\mathbb{R}\left(Q[\psi_j]\right)-Z_\mathbb{R}\left(Q[\psi_{j+1}]\right)\leqslant
2m^{(j)}-2m^{(j+1)},\quad j=0,1,\ldots,l-1.
\end{equation}
Summing the
inequalities~\eqref{theorem.the.Hawaii.conjecture.proof.4} for
$j=0,1,\ldots,l-1$, we obtain
\begin{equation*}\label{theorem.the.Hawaii.conjecture.proof.5}
\begin{array}{c}
Z_\mathbb{R}\left(Q[\psi_0]\right)-Z_\mathbb{R}\left(Q[\psi_l]\right)
=Z_\mathbb{R}\left(Q[\psi_0]\right)-Z_\mathbb{R}\left(Q[\psi_1]\right)+
Z_\mathbb{R}\left(Q[\psi_1]\right)-Z_\mathbb{R}\left(Q[\psi_2]\right)+\\
 \\
+Z_\mathbb{R}\left(Q[\psi_2]\right)-Z_\mathbb{R}\left(Q[\psi_3]\right)+\ldots+Z_\mathbb{R}\left(Q[\psi_{l-1}]\right)-Z_\mathbb{R}\left(Q[\psi_l]\right)\leqslant\\
 \\
\leqslant(2m^{(0)}-2m^{(1)})+(2m^{(1)}-2m^{(2)})+\ldots+
(2m^{(l-1)}-2m^{(l)})=2m^{(0)}-2m^{(l)}.
\end{array}
\end{equation*}
This inequality and the equality~\eqref{main.condition.1.2} yield
\begin{equation}\label{theorem.the.Hawaii.conjecture.proof.6}
Z_\mathbb{R}\left(Q[\psi_0]\right)\leqslant2m^{(0)}-2m^{(l)}+Z_\mathbb{R}\left(Q[\psi_l]\right)=
2m^{(0)}-2m^{(l)}_{1}\leqslant2m^{(0)}.
\end{equation}
But by construction of $\psi_0$, we have $Q=Q[\psi_0]$ (see
Remark~\ref{remark.new}) and $2m=2m^{(0)}$. Therefore, the
inequality~\eqref{theorem.the.Hawaii.conjecture.proof.6} is
exactly
\begin{equation*}\label{theorem.the.Hawaii.conjecture.proof.7}
Z_\mathbb{R}\left(Q\right)\leqslant2m,
\end{equation*}
Together with~\eqref{theorem.the.Hawaii.conjecture.proof.0000},
this inequality
implies~\eqref{theorem.the.Hawaii.conjecture.main.inequality}, as
required.
\end{proof}

\section{Bounds on the number of real critical points of the logarithmic derivatives of functions in $U_{2n}^*$ with
$n\geqslant1$}\label{section:main.results.for.U_2n*}

In this chapter, we extend
Theorem~\ref{technical.theorem.condition.A} to the classes
$U_{2n}^*$ with $n\geqslant1$.

As we discussed in Section~\ref{section:log.deriv}, the function
$Q[\varphi](z)$ associated with a function $\varphi$ in
$U_0^*=\mathcal{L-P^*}$ is negative for all sufficiently
large~$z$. But functions $Q[\varphi]$ associated with functions
$\varphi$ in the classes $U_{2n}^*$ with $n\geqslant1$ can be
positive for all sufficiently large~$z$, as we will see below.
Therefore, before we establish an extended version of
Theorem~\ref{technical.theorem.condition.A}, we should prove
corresponding inequalities on the number of real zeroes of
$Q[\varphi]$ on half-intervals free of zeroes of $\varphi$ when
$Q[\varphi]$ is positive at infinity.

First, we revise Theorem~\ref{corol.4} with respective modification
and we represent it in terms that are more convenient for us in
the present section.
\begin{theorem}\label{Theorem.half.infinite.Q.negative}
Let $\varphi\in U_{2n}^*$ with $n\geqslant1$, and let $\varphi$
have the largest zero~$\alpha_L$, and let the function
$Q[\varphi](z)$ be negative for all sufficiently large
positive~$z$. If $\varphi'$ has exactly $r\geqslant1$ zeroes in
the interval~$(\alpha_L,+\infty)$, counting multiplicities, and
$\beta_{S}$ is the minimal one, then
\begin{equation}\label{Theorem.half.infinite.Q.negative.formula}
2\left[\dfrac{r}2\right]\leqslant
Z_{[\beta_{S},+\infty)}(Q)\leqslant
r+Z_{[\beta_{S},+\infty)}(Q_{1}).
\end{equation}
\end{theorem}
The modification we made in this theorem concerns the behaviour of
the function $Q_1(z)$ for large real~$z$. Namely, in the
assumption of Theorem~\ref{Theorem.half.infinite.Q.negative}, the
function $Q_1(z)$ may potentially be either positive or negative
for all large positive~$z$, therefore, the numbers
$Z_{[\beta_{S},+\infty)}(Q)$ and $Z_{[\beta_{S},+\infty)}(Q_1)$
may be of different parities, and we have to add the unity at the
upper bound in the case of odd $r$.

By Theorem~\ref{Theorem.Q.for.U_2n*.at.plus.infinity}, if a
function $\varphi\in U_{2n}^*$ has the largest zero $\alpha_L$
and $Q[\varphi](z)>0$ for all sufficiently large positive~$z$,
then in this case, $\varphi'$~has an even number of zeroes in the
interval $(\alpha_L,+\infty)$. We use this fact in the following
theorem.

\begin{theorem}\label{Theorem.half.infinite.Q.positive}
Let $\varphi\in U_{2n}^*$ with $n\geqslant1$ and let $\varphi$
have the largest zero~$\alpha_L$ and let the function
$Q[\varphi](z)$ be positive for all sufficiently large
positive~$z$. If $\varphi'$ has exactly $r\geqslant2$ zeroes in
the interval~$(\alpha_L,+\infty)$, counting multiplicities, and
$\beta_{S}$ is the minimal one, then
\begin{equation}\label{Theorem.half.infinite.Q.positive.formula}
r-1\leqslant Z_{[\beta_{S},+\infty)}(Q)\leqslant
r+Z_{[\beta_{S},+\infty)}(Q_{1}).
\end{equation}
\end{theorem}
To prove this theorem, one can follow verbatim the proof of
Theorem~\ref{corol.4}.  The only differences we must take into
account are the behaviour of the function $Q(z)$ as $z\to+\infty$
and the fact that, generally speaking, the numbers
$Z_{[\beta_{S},+\infty)}(Q)$ and $Z_{[\beta_{S},+\infty)}(Q_1)$
may be of different parities.

\begin{note}\label{remark.5.1}
Theorems~\ref{Theorem.half.infinite.Q.negative}
and~\ref{Theorem.half.infinite.Q.positive} remain valid with
respective modification in the case when $\varphi$ has
the~smallest~zero~$a_S$ (see Remark~\ref{remark.2.8} and
Theorem~\ref{Theorem.Q.for.U_2n*.at.minus.infinity}).
\end{note}

\vspace{3mm}

Consider the function $\varphi$ of the following form:
\begin{equation}\label{finite.zeroes.function.type.I.positive.a}
\varphi(z)=e^{az^{2n+1}+q(z)}p(z),\quad
a\in\mathbb{R},\,\,a\neq0,\,\,n\in\mathbb{N},
\end{equation}
where $q$ is a real polynomial of degree at most $2n$ and $p$ is a
real polynomial. The~derivative of the logarithmic derivative of
the function $\varphi$ has the form
\begin{equation*}\label{Q.for.functions.of.type.I}
Q[\varphi](z)=Q[p](z)+2na(2n+1)z^{2n-1}+q''(z).
\end{equation*}
It is clear that
\begin{equation*}\label{Q.for.functions.of.type.I.on.infinity}
\sgn a\cdot Q[\varphi](z)\to\pm\infty\quad\text{as}\quad
z\to\pm\infty.
\end{equation*}

Analogously to Lemma~\ref{lemma.2} and~\ref{corol.1}, one can
prove the following simple facts.

\begin{lemma}\label{lemma.5.1}
Let $\varphi$ be of the
form~\eqref{finite.zeroes.function.type.I.positive.a} with $a>0$
$(a<0)$. If $\alpha_L$ is  the largest zero of $\varphi$, then~$Q$
has an~odd~\emph{(}even\emph{)} number of real zeroes in
$(\alpha_L,+\infty)$, counting multiplicities.
\end{lemma}
\begin{lemma}\label{lemma.5.2}
Let $\varphi$ be of the
form~\eqref{finite.zeroes.function.type.I.positive.a} with $a>0$
$(a<0)$. If $\alpha_S$ is  the smallest zero~of~$\varphi$,
then~$Q$ has an~even \emph{(}odd\emph{)} number of real zeroes in
$(-\infty,\alpha_S)$, counting multiplicities.
\end{lemma}

\begin{lemma}\label{lemma.55.3}
Let $\varphi$ be of the
form~\eqref{finite.zeroes.function.type.I.positive.a} with $a>0$ and suppose that $\varphi$ has the
largest zero $\alpha_L$ and $\varphi'$ has exactly $r\geqslant1$
extra zeroes, counting multiplicities, in the
interval~$(\alpha_L,+\infty)$. If $\beta_L$ is the largest zero of
$\varphi'$ in~$(\alpha_L,+\infty)$, then $Q$ has an even
\emph{(}odd\emph{)} number of real zeroes in $(\beta_L,+\infty)$
whenever~$r$~is an~even~\emph{(}odd\emph{)} number.
\end{lemma}
\begin{lemma}\label{lemma.55.4}
Let $\varphi$ be of the
form~\eqref{finite.zeroes.function.type.I.positive.a} with $a<0$ and suppose that $\varphi$ has the
largest zero $\alpha_L$ and $\varphi'$ has exactly $r\geqslant1$
extra zeroes, counting multiplicities, in the
interval~$(\alpha_L,+\infty)$. If $\beta_L$ is the largest zero of
$\varphi'$ in~$(\alpha_L,+\infty)$, then $Q$ has an odd
\emph{(}even\emph{)} number of real zeroes in $(\beta_L,+\infty)$
whenever~$r$~is an~even~\emph{(}odd\emph{)} number.
\end{lemma}
\noindent These lemmata can be proved in the same way as
Lemmata~\ref{lemma.2} and~\ref{corol.1} were.

From Lemmata~\ref{lemma.5.1} and~\ref{lemma.5.2} and
Proposition~\ref{prop.Q.at.alpha} it is easy to determine the
parity of the number of real zeroes of the function $Q$ associated
with functions of the
form~\eqref{finite.zeroes.function.type.I.positive.a}.
\begin{theorem}\label{corol.3.type.I}
If $\varphi$ is of the
form~\eqref{finite.zeroes.function.type.I.positive.a}, then the
function $Q$ associated with $\varphi$ has an odd number of real
zeroes, counting multiplicity.
\end{theorem}

The next lemma and theorems are exact analogues of
Theorems~\ref{corol.real.axis.3},~\ref{corol.real.axis.4}
and~\ref{corol.real.exis.6} and can be established in the same way
with respective modification where we must take into account that
$\sgn a\ Q(z)\to+\infty$ as $z\to+\infty$ for functions of the
form~\eqref{finite.zeroes.function.type.I.positive.a}.

\begin{lemma}\label{corol.real.axis.3.type.I}
Let $\varphi$ be of the
form~\eqref{finite.zeroes.function.type.I.positive.a}. If
$\varphi'(z)\neq0$, $\varphi''(z)\neq0$ for $z\in\mathbb{R}$, then
\begin{equation}\label{corol.real.axis.4.main.formula.type.I}
1\leqslant Z_\mathbb{R}(Q)\leqslant Z_\mathbb{R}(Q_{1}).
\end{equation}
\end{lemma}

\begin{theorem}\label{theorem.real.axis.4.type.I}
Let $\varphi$ be of the
form~\eqref{finite.zeroes.function.type.I.positive.a} and let
$\varphi(z)\neq0$, $\varphi'(z)\neq0$ for $z\in\mathbb{R}$. Then
the inequalities~\eqref{corol.real.axis.4.main.formula.type.I}
hold for $Q[\varphi]$.
\end{theorem}
\begin{theorem}\label{theorem.real.exis.6.type.I}
Let $\varphi$ be of the
form~\eqref{finite.zeroes.function.type.I.positive.a} and have a
unique real zero $\alpha$ such that $\varphi'(z)\neq0$ for
$z\in\mathbb{R}\backslash\{\alpha\}$. If~$\varphi$ possesses
property A at $\alpha$, then the
inequalities~\eqref{corol.real.axis.4.main.formula.type.I} hold.
\end{theorem}

Thus, for functions of the
form~\eqref{finite.zeroes.function.type.I.positive.a} one can
prove the following general theorem about bounds on the number of
real zeroes of the function $Q$.

\begin{theorem}\label{Th.main.theorem.for.finite.zeroes.function.1.general}
Let $\varphi$ be of the
form~\eqref{finite.zeroes.function.type.I.positive.a} and suppose
that $\varphi$ has exactly $2m$ nonreal zeroes. If~$\varphi$
possesses property~A, then
\begin{equation}\label{main.ineualtyg.general.type.I}
\max\{1,2m-2m_{1}+2n-1\}\leqslant
Z_\mathbb{R}\left(Q\right)\leqslant2m-2m_{1}+2n+Z_\mathbb{R}\left(Q_{1}\right),
\end{equation}
where $2m_1=Z_\mathbb{C}(\varphi')$.
\end{theorem}

We omit the proof of this theorem, since it is analogous to the
proof of
Theorem~\ref{Th.main.theorem.for.finite.zeroes.function.1}. We
only have to take into account the possibly different behaviour of the
function $Q(z)$ for sufficiently large positive~$z$ and then use
Theorem~\ref{Theorem.number.of.extra.zeroes.type.I}.

%
%

\vspace{6mm}

Let now the function $\varphi$ be of the form
\begin{equation}\label{finite.zeroes.function.type.II.for.general}
\varphi(z)=e^{az^{2n}+q(z)}p(z),\quad a>0,\,\,\, n\in\mathbb{N},
\end{equation}
where $q$ is a real polynomial of degree at most $2n-1$ and $p$ is
a real polynomial.

The function $Q$ associated  with $\varphi$ is as follows
\begin{equation*}\label{Q.for.functions.of.type.II}
Q[\varphi](z)=Q[p](z)+2na(2n-1)z^{2n-2}+q''(z).
\end{equation*}
It is easy to see that
\begin{equation*}\label{Q.for.functions.of.type.II.on.infinity}
Q[\varphi](z)\to+\infty\quad\text{as}\quad z\to\pm\infty.
\end{equation*}

This immediately implies the following simple results.

\begin{lemma}\label{lemma.5.1.type.II}
Let $\varphi$ be of the
form~\eqref{finite.zeroes.function.type.II.for.general}. If
$\alpha_L$ is  the largest zero of $\varphi$, then~$Q$ has an~odd
number of real zeroes in $(\alpha_L,+\infty)$, counting
multiplicities.
\end{lemma}
\begin{lemma}\label{lemma.5.2.type.II}
Let $\varphi$ be of the
form~\eqref{finite.zeroes.function.type.II.for.general}. If
$\alpha_S$ is  the smallest zero of $\varphi$, then~$Q$ has an~odd
number of real zeroes in $(-\infty,\alpha_S)$, counting
multiplicities.
\end{lemma}
\begin{theorem}\label{corol.3.type.II}
If $\varphi$ is of the
form~\eqref{finite.zeroes.function.type.II.for.general}, then the
function $Q$ associated with $\varphi$ has an~even number of real
zeroes, counting multiplicity.
\end{theorem}

Using these lemmata and theorem, one can establish the following
theorem.
\begin{theorem}\label{Th.main.theorem.for.finite.zeroes.function.2.general}
Let $\varphi$ be of the
form~\eqref{finite.zeroes.function.type.II.for.general} and
suppose that $\varphi$ has exactly $2m$ nonreal zeroes.
If~$\varphi$ possesses property~A, then
\begin{equation}\label{main.ineualtyg.general.type.II}
2m-2m_{1}+2n-2\leqslant
Z_\mathbb{R}\left(Q\right)\leqslant2m-2m_{1}+2n+Z_\mathbb{R}\left(Q_{1}\right),
\end{equation}
where $2m_1=Z_\mathbb{C}(\varphi')$.
\end{theorem}
As above we omit the proof of this theorem, since it is analogous
to the proof of
Theorem~\ref{Th.main.theorem.for.finite.zeroes.function.2}. We
only take into account that $Q(z)>0$ for all sufficiently large
real~$z$ and then use
Theorem~\ref{Theorem.number.of.extra.zeroes.type.II}.

%
%

\vspace{6mm}

If the function $\varphi$ has the form
\begin{equation}\label{finite.zeroes.function.type.III.for.general}
\displaystyle\varphi(z)=e^{-az^{2n+2}+q(z)}p(z),\quad
a>0,\,\,\,n\in\mathbb{N},
\end{equation}
where $q$ is a real polynomial of degree at most $2n+1$ and $p$ is
a real polynomial, then the function $Q[\varphi]$ is as follows
\begin{equation*}\label{Q.for.functions.of.type.III}
Q[\varphi](z)=Q[p](z)-a(2n+1)(2n+2)z^{2n}+q''(z).
\end{equation*}
It is clear that
\begin{equation*}\label{Q.for.functions.of.type.III.on.infinity}
Q[\varphi](z)\to-\infty\quad\text{as}\quad z\to\pm\infty.
\end{equation*}
Therefore,  $Q(z)<0$ for all sufficiently large real~$z$ and all
results of
Sections~\ref{subsection:half-infinite.intervals}--\ref{section:general.theorems}
are true in this case. Consequently, for functions of the
form~\eqref{finite.zeroes.function.type.III.for.general}
Theorem~\ref{Th.main.theorem.for.finite.zeroes.function.3} is
valid and can be proved in the same way. From
Theorems~\ref{Th.main.theorem.for.finite.zeroes.function.3}
and~\ref{Theorem.number.of.extra.zeroes.type.III} we have

\begin{theorem}\label{Th.main.theorem.for.finite.zeroes.function.3.general}
Let $\varphi$ be of the
form~\eqref{finite.zeroes.function.type.III.for.general} and
suppose that $\varphi$ has exactly $2m$ nonreal zeroes.
If~$\varphi$ possesses property~A, then
\begin{equation}\label{main.ineualtyg.general.type.III}
2m-2m_{1}+2n\leqslant
Z_\mathbb{R}\left(Q\right)\leqslant2m-2m_{1}+2n+Z_\mathbb{R}\left(Q_{1}\right),
\end{equation}
where $2m_1=Z_\mathbb{C}(\varphi')$.
\end{theorem}

Next, if a function $\varphi\in U_{2n}^*$ with $n\geqslant1$ has
an infinite number of positive and negative zeroes, then we use
results of Section~\ref{section:finite.intervals} to prove the
following theorem.
\begin{theorem}\label{Th.main.theorem.for.finite.zeroes.function.4.general}
Let $\varphi\in U_{2n}^*$ with $n\geqslant1$ have infinitely many
positive and negative zeroes. If~$\varphi$ possesses property~A,
then
\begin{equation}\label{main.ineualtyg.general.type.IV}
2m-2m_{1}+2n\leqslant
Z_\mathbb{R}\left(Q\right)\leqslant2m-2m_{1}+2n+Z_\mathbb{R}\left(Q_{1}\right),
\end{equation}
where $2m_1=Z_\mathbb{C}(\varphi')$.
\end{theorem}
This theorem follows from
Theorem~\ref{Theorem.number.of.extra.zeroes.type.IV} and the
inequality~\eqref{finite.zeroes.function.1.condition} which can be
proved in the same way as in
Theorem~\ref{Th.main.theorem.for.infinite.zeroes.functions}.

\vspace{4mm}

At last, consider the functions in $U_{2n}^*$ with $n\geqslant1$
that have infinitely many zeroes but only finitely many positive
or negative zeroes.
\begin{theorem}\label{Th.main.theorem.for.finite.zeroes.function.5.general}
Let $\varphi\in U_{2n}^*$ with $n\geqslant1$ have infinitely many
zeroes but only finitely many positive or negative zeroes and
suppose that $\varphi$ has exactly $2m$ nonreal zeroes.
If~$\varphi$ possesses property~A, then
\begin{equation}\label{main.ineualtyg.general.type.V}
2m-2m_{1}+2n-1\leqslant
Z_\mathbb{R}\left(Q\right)\leqslant2m-2m_{1}+2n+2+Z_\mathbb{R}\left(Q_{1}\right),
\end{equation}
\end{theorem}
\begin{proof}
Without loss of generality, we may assume that $\varphi$ has the
largest real zero, say $\alpha_L$. If $\beta<\alpha_L$ is a zero
of $\varphi'$ such that $\varphi(z)\neq0$ and $\varphi'(z)\neq0$
for $z\in(\beta,\alpha_L)$, then applying
Corollary~\ref{biggest.technical.theorem} and
Theorem~\ref{corol.new} to the interval $(-\infty,\beta]$, we have
\begin{equation}\label{Th.main.theorem.for.finite.zeroes.function.5.general.proof.1}
2r\leqslant Z_{(-\infty,\beta]}\left(Q\right)\leqslant
2r+Z_{(-\infty,\beta]}\left(Q_{1}\right),
\end{equation}
where $2r\geqslant0$ is the number of extra zeroes of $\varphi'$
in the interval $(-\infty,\beta]$.

If $\varphi'$ has at least one zero in the interval
$(\alpha_L,+\infty)$, then
by~\eqref{Th.main.theorem.for.finite.zeroes.function.5.general.proof.1}
and by Theorems~\ref{corol.new}
and~\ref{Theorem.half.infinite.Q.negative}--\ref{Theorem.half.infinite.Q.positive},
we obtain
\begin{equation}\label{Th.main.theorem.for.finite.zeroes.function.5.general.proof.2}
E(\varphi')-1\leqslant
Z_\mathbb{R}\left(Q\right)\leqslant E(\varphi')+Z_\mathbb{R}\left(Q_{1}\right).
\end{equation}

Let now $\varphi'$ has no zeroes in the interval
$(\alpha_L,+\infty)$, then by~\eqref{main.work.formula.new.new}
(see Remark~\ref{remark.2.6}) and
by~\eqref{Th.main.theorem.for.finite.zeroes.function.5.general.proof.1}
we obtain
\begin{equation}\label{Th.main.theorem.for.finite.zeroes.function.5.general.proof.2.dub}
E(\varphi')-1\leqslant
Z_\mathbb{R}\left(Q\right)\leqslant E(\varphi')+1+Z_\mathbb{R}\left(Q_{1}\right).
\end{equation}
The
inequalities~\eqref{Th.main.theorem.for.finite.zeroes.function.5.general.proof.2}--\/\eqref{Th.main.theorem.for.finite.zeroes.function.5.general.proof.2.dub}
with Theorem~\ref{Theorem.number.of.extra.zeroes.type.IV}
imply~\eqref{main.ineualtyg.general.type.V}, as required.
\end{proof}

Now combining
Theorems~\ref{Th.main.theorem.for.finite.zeroes.function.1.general}
and~\ref{Th.main.theorem.for.finite.zeroes.function.2.general}--\ref{Th.main.theorem.for.finite.zeroes.function.5.general}
we obtain the general result for functions in $U_{2n}^*$ with
$n\geqslant1$.

\begin{theorem}\label{Theorem.genaral,bound.for.U_2n*}
Let $\varphi$ be in $U_{2n}^*$ with $n\geqslant1$ and suppose that
$\varphi$ has exactly $2m$ nonreal zeroes. If~$\varphi$ possesses
property~A, then
\begin{equation}\label{main.result.generalest}
2m-2m_{1}+2n-2\leqslant
Z_\mathbb{R}\left(Q\right)\leqslant2m-2m_{1}+2n+2+Z_\mathbb{R}\left(Q_{1}\right),
\end{equation}
where $2m_1=Z_\mathbb{C}(\varphi')$.
\end{theorem}

As in Section~\ref{section:general.theorems}, we make the
following remarks.

\begin{note}\label{remark.all.multiple.zeroes.for.U_2n*}
If a function $\varphi\in U_{2n}^*$ with $n\geqslant1$ has only
multiple real zeroes, then in the proof of
Theorem~\ref{main.result.generalest}, we can use
Remark~\ref{remark.multiple.zeroes} instead of
Theorem~\ref{corol.new}. Thus, the
inequalities~\eqref{main.result.generalest} also hold for
functions in $U_{2n}^*$, $n\geqslant1$, with only multiple real
zeroes.
\end{note}

As in the proofs of
Theorems~\ref{Th.main.theorem.for.finite.zeroes.function.1},~\ref{Th.main.theorem.for.finite.zeroes.function.2},
\ref{Th.main.theorem.for.finite.zeroes.function.3}
and~\ref{Th.main.theorem.for.infinite.zeroes.functions}, in the
proof of Theorem~\ref{main.result.generalest}, we do not use the
fact that $\varphi$ possesses \textit{property~A} getting the
lower bound. Therefore in Theorem~~\ref{main.result.generalest},
the lower bound of the~number $Z_\mathbb{R}(Q)$ does not depend on
\textit{property~A} and holds for every function in $U_{2n}^*$
with $n\geqslant1$, and we have our final theorem.

\begin{theorem}\label{Theorem.genaral,bound.for.U_2n*.2}
Let $\varphi$ be in $U_{2n}^*$ with $n\geqslant1$ and suppose that
$\varphi$ has exactly $2m$ nonreal zeroes. Then
\begin{equation*}\label{main.result.generalest.lower}
Z_\mathbb{R}\left(Q\right)\geqslant2m-2m_{1}+2n-2,
\end{equation*}
where $2m_1=Z_\mathbb{C}(\varphi')$.
\end{theorem}

\section*{Acknowledgments}
The author thanks Thomas Craven, George Csordas, Alexandre
Eremenko, Olga Holtz, and Wayne Smith for helpful discussions and
suggestions that promoted simplifications of some proofs and the
referee for helpful critique.


\begin{thebibliography}{99}
%
\bibitem{Alander}
{\rm M.\,\AA lander\/}, \textit{Sur les z\'eros complexes des
d\'eriv\'ees des fonctions enti\`eres r\'eelles}, Ark. Mat.
Astronom. Fys. (10) \textbf{16} (1922).
%
\bibitem{BergweilerEremenko}
{\rm W.\,Bergweiler and A.\,Eremenko\/}, \textit{Proof of a
conjecture of {P}\'olya on the zeros of successive derivatives of
real entire functions}, Acta Math. (2) \textbf{197}
(2006),~145--166.
%
\bibitem{Boas}
{\rm R.P.\,Boas\/}, Entire functions, Academic press, New York,
1954.
%
\bibitem{BorceaShapiro}
{\rm J.\,Borcea, B.\,Shapiro\/}, \textit{Classifying real
polynomial pencils}, Int. Math. Res. Not. \textbf{69} (2004),
3689--3708.
%
\bibitem{CravenCsordasSmith}
{\rm T.\,Craven, G.\,Csordas, and W.\,Smith\/}, \textit{The zeros
of derivatives of entire functions and the P\'olya-Wiman
conjecture}, Ann. of Math. (2) \textbf{125} (1987), 405--431.
%
\bibitem{CravenCsordasSmith2}
{\rm T.\,Craven, G.\,Csordas, and W.\,Smith\/}, \textit{Zeros of
derivatives of entire functions}, Proc. Amer. Math. Soc.
\textbf{101} (1987), 323--326.
%
\bibitem{Csordas}
{\rm G.\,Csordas\/}, \textit{Linear operators and the distribution
of zeros of entire functions}, Complex Variables and Elliptic
Equations (7) \textbf{51} (2006), 625--632.
%
\bibitem{Dilcher}
{\rm K.\,Dilcher\/}, \textit{Real Wronskian Zeros of polynomials
with nonreal zeroes}, J. Math. Anal. Appl. \textbf{154}
(1991),~164--183
%
\bibitem{DilcherStolarsky}
{\rm K.\,Dilcher, K.B.\,Stolarsky\/}, \textit{Zeros of the
Wronskian of a polynomial}, J. Math. Anal. Appl. \textbf{162}
(1991),~430--451.
%
\bibitem{Edwards}
{\rm S.\,Edwards\/}, Private communication, 2008.
%
\bibitem{EdwardsHellerstein}
{\rm S.\,Edwards and S.\,Hellerstein\/}, \textit{Non-real zeros of
derivatives of real entire functions and the P\'olya-Wiman
conjectures}, Complex Variables (1) \textbf{47} (2002),~25--57.
%
\bibitem{EdwardsHinkkanen}
{\rm S.\,Edwards and A.\,Hinkkanen\/}, \textit{Level sets, a
Gauss--Fourier conjecture, and a counter-example to a conjecture
of Borcea and Shapiro}, Comput. Methods Funct. Theory (1)
\textbf{11} (2011),~1--12.
%
\bibitem{GunterKuzmin}
{\rm N.M.\,Gyunter, R.O.\,Kuz'min\/}, Problem book on higher
mathematics. Vol. 2, Moscow, Gos-techizdat, 1945. (in Russian)
%
\bibitem{HellersteinWilliamson1977}
{\rm S.\,Hellerstein and J.\, Williamson\/}, \textit{Derivatives
of entire functions and a question of {P}\'olya}, Trans. Amer.
Math. Soc. \textbf{227} (1977),~227--249.
%
\bibitem{Laguerre}
{\rm E.\,Laguerre\/}, \textit{Oeuvres}, vol. 1, Gauthier-Villars,
Paris, 1898.
%
\bibitem{Levin}
{\rm B.Ja.\,Levin\/}, Distribution of zeros of entire functions,
Trans. Math. Monographs, Vol.~\textbf{5}, Amer. Math. Soc.,
Providence, RI, 1964.
%
\bibitem{Lindwart_Polya}
{\rm E.\,Lindwart and G.\,P\'olya\/}, \textit{\"Uber einen
Zusammenhang zwischen der Konvergenz vom Polynomfolgen und der
Verteilung ihrer Wurzeln}, Rend. Circ. Mat. Palermo \textbf{37}
(1914), 297--304.
%
\bibitem{Nagy}
{\rm J.\,v.\,Sz.\,Nagy\/}, \textit{\"Uber die {L}age der
nichtreellen {N}ullstellen van reellen {P}olynomen und von
gewissen reellen ganzen Funktionen}, J. Reine Angew. Math.
\textbf{170} (1934), 133--147.
%
\bibitem{Polya}
{\rm G.\,P\'olya\/}, \textit{\"Uber Ann\"aherung durch Polynome
mit lauter reellen Wurzeln}, Rend. Circ. Mat. Palermo \textbf{36}
(1913), 279--295.
%
\bibitem{Sheil-Small}
{\rm T.\,Sheil-Small\/}, Complex Polynomials, Cambridge Studies in
Adv. Math. Vol.~\textbf{75}, Cambridge~Univ.~Press, Cambridge,
2002.

\end{thebibliography}
\end{document}